\newtheorem{assumption}{Assumption}[section]
\newtheorem{proposition}{Proposition}[section]
\newtheorem{theorem}{Theorem}[section]
\newtheorem{definition}{Definition}[section]
\newtheorem{lemma}{Lemma}[section]
\newtheorem{remark}{Remark}[section]
\newcommand{\overbar}[1]{\mkern 1.5mu\overline{\mkern-1.5mu#1\mkern-1.5mu}\mkern 1.5mu}
\newcommand{\blue}{\color{blue}}
\title{How to place an obstacle having a dihedral symmetry centered at a given point inside a disk so as to optimize the fundamental Dirichlet eigenvalue}
\author{
Anisa M.H. Chorwadwala{\footnote{
anisa@iiserpune.ac.in, Indian Institute of Science Education and Research Pune,
Dr. Homi Bhabha Road, Pashan, Pune 411008,
India,
Tel: +91(20)25908218.}}
\and
Souvik Roy{\footnote{
souvik.roy@mathematik.uni-wuerzburg.de, Institut f\"ur Mathematik,
Universit\"at W\"urzburg,
Emil-Fischer-Strasse 30,
97074 W\"urzburg,
Germany,
Tel: +49 15213647226.}}
}
\date{}
\begin{document}
\maketitle

\begin{abstract} 
 A generic model for the shape optimization problems we consider in this paper is the optimization of the Dirichlet eigenvalues of the Laplace operator with a volume constraint. We deal with an obstacle placement problem which can be formulated as the following eigenvalue optimization problem: Fix two positive real numbers $r_1$ and $A$. We consider a disk $B\subset \mathbb{R}^2$ having radius $r_1$. We want to place an obstacle $P$ of area $A$ within $B$ so as to maximize or minimize the fundamental Dirichlet eigenvalue $\lambda_1$ for the Laplacian on $B\setminus P$. That is, we want to study the behavior of the function $\rho \mapsto \lambda_1(B\setminus\rho(P))$, where $\rho$ runs over the set of all rigid motions of the plane fixing the center of mass for $P$ such that $\rho(P)\subset B$. In this paper, we consider this obstacle placement problem for the case where (i) the obstacle $P$  is invariant under the action of a dihedral group $\mathbb{D}_n,~ n ≥ 3,$ $n$ even, (ii) $P$ and $B$ have distinct centers, and (iii) the boundary $\partial P$ of $P$ satisfy certain monotonicity condition between each pair of consecutive axes of symmetry of $P$. The extremal configurations correspond to the cases where an axis of symmetry of $P$ coincide with an axis of symmetry of $B$. We also characterize the maximizing and the minimizing configurations in our main result, viz., Theorem \ref{max_min}. Equation (\ref{even_lambda}), Propositions \ref{critical_points} and \ref{complete_critical_points} imply Theorem \ref{max_min}. We give many different generalizations of our result.
At the end, we provide some numerical evidence to validate our main theorem for the case where the obstacle $P$ has $\mathbb{D}_4$ symmetry. 

{ For the $n$ odd case, we identify some of the extremal configuration for $\lambda_1$. We prove that equation (\ref{even_lambda}) and Proposition \ref{critical_points} hold true for $n$ odd too. We highlight some of the difficulties faced in proving Proposition \ref{complete_critical_points} for this case. We provide numerical evidence for $n=5$ and conjecture that Theorem \ref{max_min} holds true for $n$ odd too.}
 \end{abstract}
 
 Keywords: {eigenvalue problem, Dirichlet Laplacian, Schr\"{o}dinger operator, extremal fundamental eigenvalue, dihedral group, maximum principle, shape derivative, finite element method, moving plane method}\\

AMS subject classifications: {35J05, 35J10, 35P15, 49R05, 58J50}

\section{Introduction}
We start with a motivation for studying what is known as the shape optimization problems. We borrow this motivation and the introduction from \cite{anisa-aithal1}. Questions of the following type arise quite naturally. Why are small water droplets and bubbles that float in air approximately spherical? Why does a herd of reindeer form a circle if attacked by wolves? Why does a cat fold her body to form almost a round shape on a cold night? Can we hear the shape of a drum? Of all geometric figures having a certain property, which one has the greatest area or volume? And of all figures having a certain property, which one has the least perimeter or surface area? Mathematician have been trying to answer such questions via what is known as studying the shape optimization problems. A shape optimization problem typically deals with finding a shape which is optimal in the sense that it minimizes a certain cost functional among all shapes satisfying some given constraints. Mathematically speaking, it is to find a domain $\Omega$ that minimizes a cost functional $J(\Omega)$ possibly subject to a constraint of the form $G(\Omega)=0$. In other words, it is about minimizing a functional $J(\Omega)$ over a family $\mathcal{F}$ of admissible domains $\Omega$. That is, to find an optimal domain, $\Omega^*$ say, in $\mathcal{F}$ such that $\displaystyle J(\Omega^*)=\min_{\Omega \in \mathcal{F}}J(\Omega)$. In many cases, the functional being minimized depends on a solution of a given partial differential equation defined on a varying domain. The classical isoperimetric problem and its variants are examples of shape optimization problems.

Shape optimization problems arise naturally in different areas of science and engineering. In the context of spectral theory, these problems usually involve the study of eigenvalues of elliptic differential operators. Analysis of such problems is crucial in many physical applications which include designing of musical instruments so as to produce a desired sound \cite{kinsler, osher}, building of structures which are non-resonant to force \cite{tisseur}, analyzing the static equilibrium of a nonrigid water tank containing obstacles \cite{bonnans}, and designing of the optimal accelerator cavities \cite{accelerator}.

A generic model for such shape optimization problems is the optimization of the Dirichlet eigenvalues of the Laplace operator with a volume constraint. The origin of such problems dates back to 1800s when Rayleigh conjectured the famous isoperimetric inequality \cite{rayleigh}, which was proved by Faber \cite{faber} in 1923 and by Krahn \cite{krahn} in 1925, independently. Since then, there have been numerous notable research on the eigenvalue optimization problems involving various constraints. For a review of such results please refer to \cite{ashbaugh1, ashbaugh2, henrot, osserman}.
For a mini review of the kind of shape optimization problems that one of the authors along with her collaborators have worked on one may also refer to \cite{AnisaCS}.

The problem of the placement of an obstacle inside a given planar domain was first studied by Hersch \cite{hersch}. In the problem considered by him, the optimal configuration for the fundamental Dirichlet eigenvalue $\lambda_1$ for the Laplacian was characterized for the case where a circular obstacle is placed inside a disk. See also Ramm and Shivakumar \cite{Ramm-Shivakumar} for this case. Their results were subsequently extended to higher dimensional Euclidean spaces by Kesavan, and Harell et al., cf. \cite{kesavan, harrel_kurata}. 
In \cite{harrel_kurata}, the case of multiple circular obstacles of possibly different sizes was also considered. In all these results the obstacles were balls in $\mathbb{E}^n$ and thus only translation of the obstacle/s affect the eigenvalues. Therefore, these obstacle placement problems reduce to just positioning of the center/s of the obstacle/s inside the outer disk. These results were further extended from the Euclidean case to all the three space forms in \cite{anisa_aithal} and later to all rank one symmetric spaces of non-compact type in \cite{anisa_vemuri}. The mini review article \cite{AnisaCS} gives a brief explanation of the difficulties faced in proving these generalizations and about how the respective authors overcame these difficulties. 

In \cite{elsoufi_kiwan}, an obstacle placement problem inside a planar domain was investigated for the case where (o) the obstacle $P_1$ and the domain $P_2$ had fixed areas, (i) the obstacle $P_1$ and the domain $P_2$ both were invariant under the action of the same dihedral group $\mathbb{D}_n$ $n ≥ 3$, (ii) the obstacle $P_1$ and the domain $P_2$ were concentric, (iii) the boundaries of $P_1$ and $P_2$ were simple closed $\mathcal{C}^2$ curves, (iv) between each pair of consecutive axes of symmetry of the obstacle $P_1$, a monotonicity assumption was made on its boundary $\partial P_1$, and (v) between each pair of consecutive axes of symmetry of the domain $P_2$ 
a monotonicity assumption was made on its boundary $\partial P_2$. For such pairs $P_1$ and $P_2$, they considered a family $\mathcal{F}$ of domains of the type $P_2 \setminus \overline{P_1}$.  Among $\mathcal{F}$, the extremal configurations for the fundamental Dirichlet eigenvalue $\lambda_1$ for the Laplacian were obtained by rotating the obstacle around its fixed center. The extremal configurations for $\lambda_1$ correspond to the cases where the axes of symmetry of the obstacle $P_1$ coincide with those of the domain $P_2$. In such configurations this common axis of symmetry of $P_1$ and $P_2$ then becomes the axis of symmetry of the $P_2 \setminus \overline{P_1}$.  Further, the characterizations of both the minimizing and the maximizing configurations for $\lambda_1$ are also obtained in \cite{elsoufi_kiwan}. 

In this paper, we prove a variant of the obstacle placement problem considered in \cite{elsoufi_kiwan}. We consider the case where the planar obstacle $P$ is invariant under the action of a dihedral group $\mathbb{D}_n$ $n ≥ 3$, $n$ even. It follows that the axes of symmetry of $P$ intersect in a unique point in the interior of $P$. We call this point the center of $P$ and denote it by $\underline{o}$. Let $B$ be a disk in $\mathbb{E}^2$ containing $\underline{o}$ away from its center. We place the obstacle $P$ centered at the fixed point $\underline{o}$ inside $B$. 
That is, the centers of $P$ and $B$ are distinct. In accordance with the notations of the previous paragraph, $P_1=P$ and $P_2 =B$ for us. The disk $B$ obviously is invariant under the action of dihedral groups $\mathbb{D}_n$, for each  $n ≥ 3$. Therefore, in our case, condition (i) of the above paragraph holds for some $n$, $n$ even, while condition (ii) does not hold. We, of course, assume the smoothness condition (iii) on both the boundaries and also assume the volume constraint (o) on $P$ and $B$ both. We further assume the monotonicity condition (iv) on the boundary $\partial P$ of the obstacle $P$ as in the previous paragraph. 
We derive certain monotonicity condition on the boundary of the disk $B$ in Lemma \ref{bound_monot}. Therefore, for us condition (v) of the above paragraph for $P_2 =B$ is replaced by the statement of Lemma \ref{bound_monot}. In this setting, we investigate the extremal configurations of the obstacle $P$ with respect to the disk $B$ for the fundamental Dirichlet eigenvalue $\lambda_1$ for the Laplacian by rotating $P$, inside $B$, about the fixed center $\underline{o}$ of $P$. Such problems apply naturally, for example, to the designing of some musical instruments, where one usually has an asymmetric structure of the obstacle with respect to the domain. 

The proof in \cite{elsoufi_kiwan} relies mainly on the Hadamard perturbation formula and the reflection technique as in \cite{serrin}. Since both, the obstacle and the domain, had a dihedral symmetry and were concentric, it was enough for the authors to study the behavior of $\lambda_1$ with respect to the rotations of the obstacle by angle $\theta \in (0, \pi/n)$ where $\pi /n$ is nothing but the angle between two consecutive axes of symmetry of the obstacle $P$. The proof in \cite{elsoufi_kiwan} 
works for obstacles with $\mathbb{D}_n$ symmetry for any $n \geq 3$, odd as well as even. 

{ In this current work, because of the lack of such a symmetry, as $P$ and $B$ are not concentric, the analysis of the behavior of $\lambda_1$ is more challenging. Recall that $P$ has a $\mathbb{D}_n$ symmetry. We prove our main  theorem, viz. Theorem \ref{max_min} for $n$ even and {highlight some of the difficulties faced in proving the result for $n$ odd.} 

For the $n$ even case, we analyze the behavior of $\lambda_1$  in two different hemispheres of the disk $B$ separately. We perform this analysis using an appropriate domain reflection technique. 
Since the obstacle $P$ we consider has a $\mathbb{D}_n$ symmetry, if we take $n$ to be even, $n\geq 3$, the axes of symmetry of $P$ divide $B$ in even number of sectors in each of these hemispheres. This helps in pairing up two consecutive sectors in each of these hemispheres. We then reflect the smaller sector of the two into the larger one using the reflection about the axis of symmetry separating these two sectors. It makes sense to call this domain reflection technique as sector reflection technique. 

For the $n$ odd case, 
the axes of symmetry of $P$ divide $B$ in {\blue odd} number of sectors in each of these hemispheres. Therefore, it's not possible to find a complete pairing of consecutive sectors within each of the hemispheres, and hence the sector reflection technique mentioned above doesn't work.}  

In the next section, in order to introduce the family of domains over which we are going to carry out the eigenvalue optimization analysis, we list the assumptions made on them. We also give a few definitions so as to identify the various different configurations in the family of domains under consideration. 

In section \ref{Prel}, we prove a monotonicity property on the boundary of an arbitrary disk $B$, see Lemma \ref{bound_monot}, using the representation $B$ in polar coordinates with respect to a point other than its center. We then consider a planar simply connected bounded domain $K$ and represent it in polar co-ordinates with respect to the origin in $\mathbb{R}^2$. We consider the unit outward normal vector field to $K$ on its boundary $\partial K$. We call this vector field $\eta$. We derive an expression for $\eta$ in the polar co-ordinates. We then consider a smooth vector field $v$ in $\mathbb{R}^2$ that rotates the domain $K$ by a right angle about the origin in the anticlockwise direction. We then derive the expression, in polar coordinates, for the inner product of these two vector fields evaluated at a boundary point. The lemmas of section \ref{Prel} are useful in proving our main theorem, viz., Theorem \ref{max_min}. 

In Section \ref{stmnt}, we state our main theorem, viz., Theorem \ref{max_min} describing the extremal configurations for $\lambda_1$ over the family of admissible domains. This theorem also characterizes the maximizing and the minimizing configurations for $\lambda_1$.

In section \ref{proof}, we give a proof of Theorem \ref{max_min} { for $n$ even, $n \geq 3$}. We first justify that the fundamental Dirichlet eigenvalue $\lambda_1$ of the Laplacian for the family of domains under consideration is a function of just one real variable and that it is an even periodic function of period $2 \pi$. Therefore, in order to determine the extremal configuration/s for $\lambda_1$ we study the behavior of its derivative. The Hadamard perturbation formula (\ref{Hadamard}) becomes useful in this analysis. We identify some of critical points for $\lambda_1$ in Proposition \ref{critical_points}. In view of equation (\ref{even_lambda}) Propositions \ref{critical_points} and \ref{complete_critical_points} imply that (a) the critical points listed in Proposition \ref{critical_points} are the only critical points for $\lambda_1$ and that (b) between every pair of consecutive critical points, $\lambda_1$ is a strictly monotonic function of the argument. { 
We prove that equation (\ref{even_lambda}) and Proposition \ref{critical_points} hold true for $n$ odd too. We highlight some of the difficulties faced in proving Proposition \ref{complete_critical_points} for this case.}

 In Section \ref{sec:remarks}, we talk about generalizations of Theorem \ref{max_min} to differential equations involving Schr\"{o}dinger-type operators. The result is still valid if instead of a hard obstacle we consider soft obstacles or wells. A theorem similar to Theorem \ref{max_min} also holds for the energy functional associated with the stationary Dirichlet boundary value problem (\ref{stationary}). We then generalize the result to planar obstacles with non-smooth polygonal boundary. We then talk about some generalizations from the Euclidean case to some other Riemannian manifolds of dimension 2 known as space forms, i.e., complete simply connected Riemannian manifolds having constant sectional curvature.  

In Section \ref{sec:num_results}, we provide some numerical evidence supporting Theorem \ref{max_min} { for $n$ even. We also provide numerical evidence for $n=5$ and conjecture that Theorem \ref{max_min} holds true for $n$ odd too.}

\section{The family of admissible domains and various configurations}\label{sec:prelim}
In this section, in order to introduce the family of domains over which we are going to carry out the eigenvalue optimization analysis, we list the assumptions made on them. We also give a few definitions so as to identify the various different configurations in the family of domains under consideration. {In this section, $n$ is a positive integer, $n \geq 3$, even or odd.}

\subsection{The family of admissible domains}\label{TheDomain}
Let $n$ be a positive integer, $n \geq 3$. Consider the dihedral group $\mathbb{D}_n$ generated by a rotation $r$ of order $n$ and 
 a reflection $s$ of order 2 such that
$\displaystyle s r s=r^{-1}$. Here, $r $ is a rotation by an angle $2 \pi/n$. Fix $A>0$. Let $P$ denote a compact simply connected subset of the Euclidean plane $\mathbb{E}^2$ satisfying the following assumptions:
\begin{assumption}. \label{assumption_A}
\begin{enumerate}[(a)]
\item the boundary $\partial P$ of $P$ is a simple closed $\mathcal{C}^2$ curve in $\mathbb{R}^2$,
\item  $P$ has a $\mathbb{D}_{n}$ symmetry for some $n \geq 3$, $n$ even, i.e., $P$ is invariant under the action of a dihedral group $\mathbb{D}_{n}$ for some $n \geq 3$, 
\item the area of $P$ is $A$.
\end{enumerate}
\end{assumption}
\begin{figure}[H]\centering
\subfloat{\includegraphics[width=0.4\textwidth]{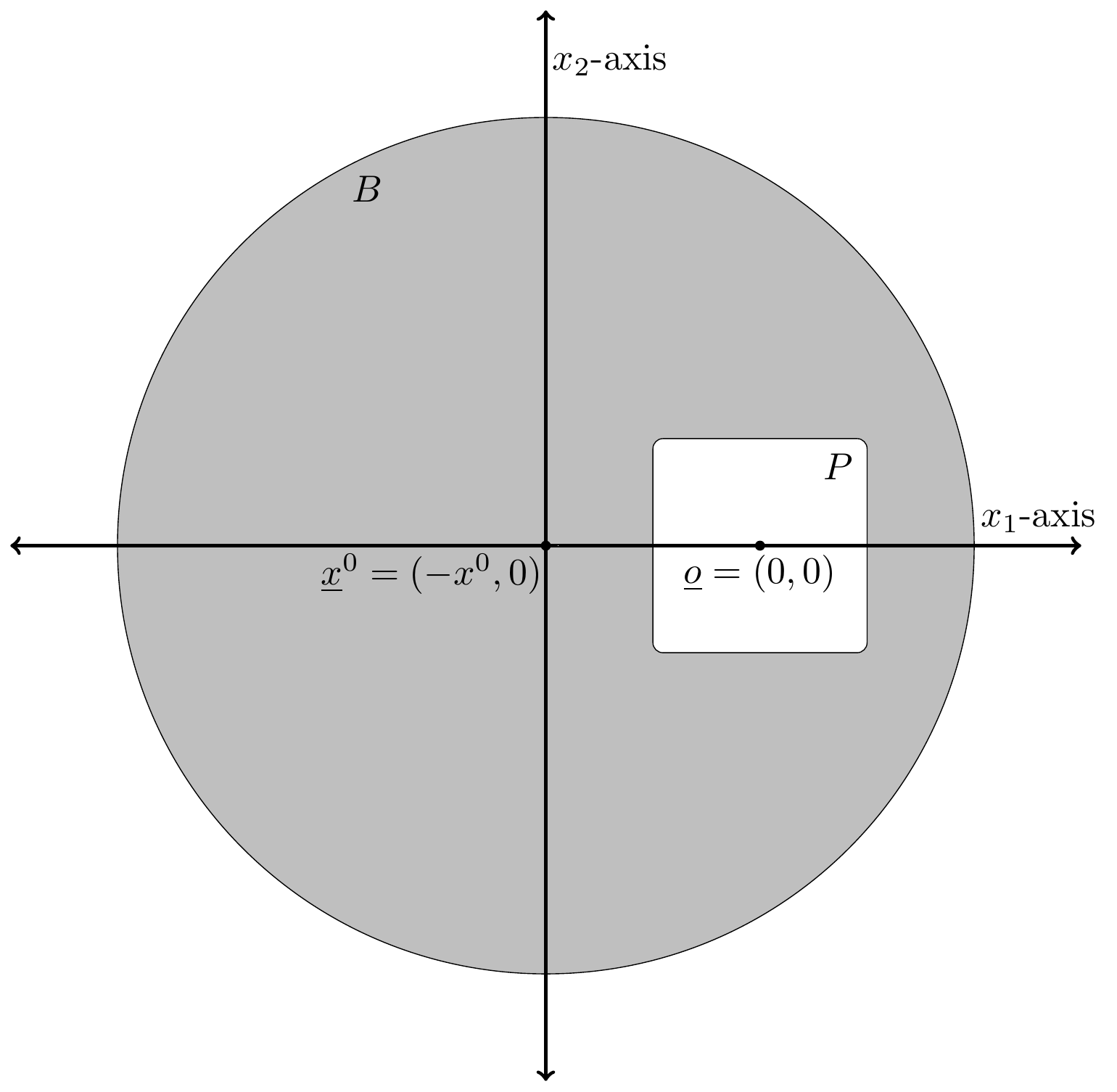}    }\hspace{10mm}
\subfloat{\includegraphics[width=0.4\textwidth]{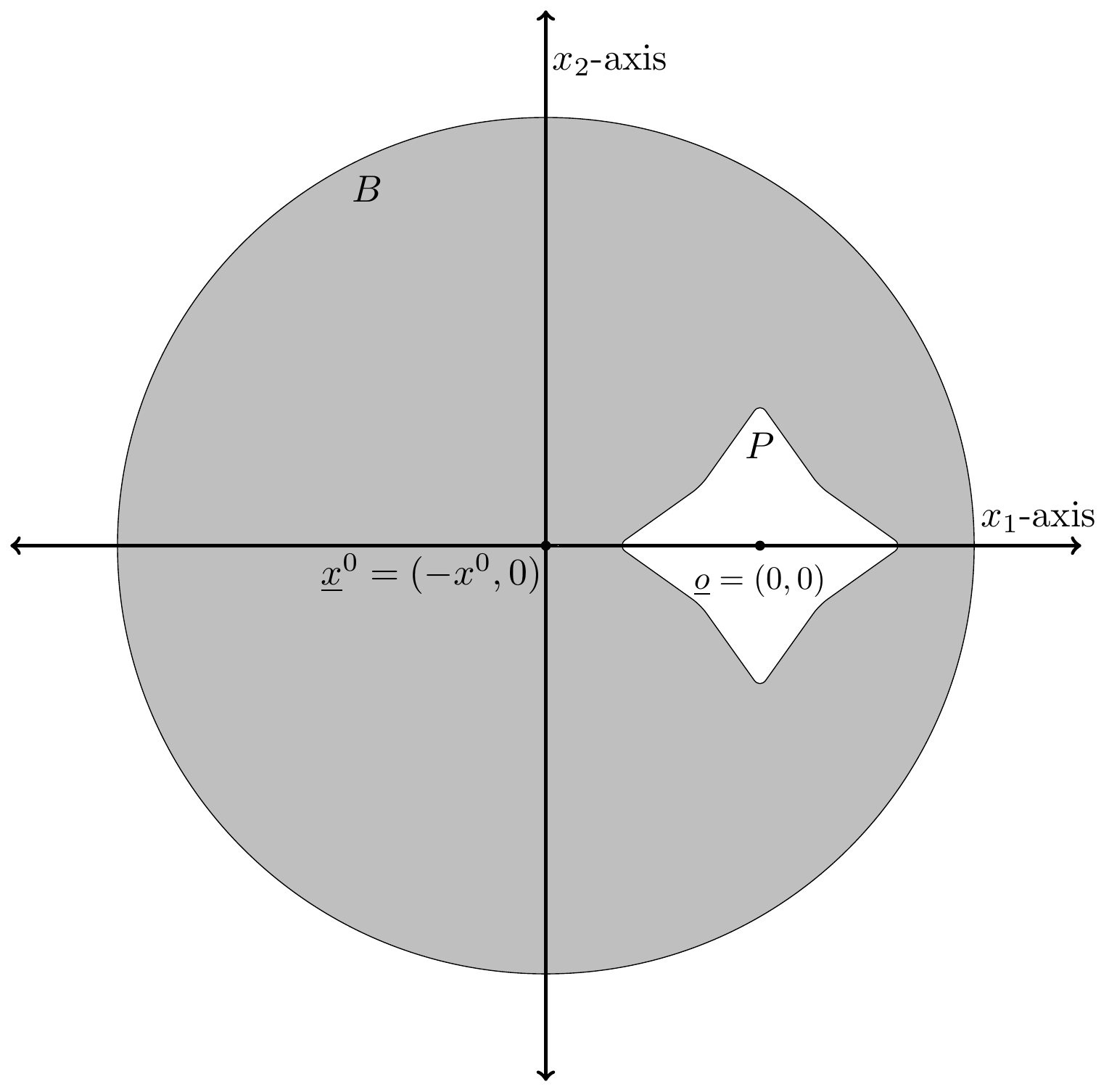}    }
\caption{Obstacles having $\mathbb{D}_4$ symmetry }\label{fig:domain}
\end{figure}
\begin{figure}[H]\centering
\subfloat{\includegraphics[width=0.4\textwidth]{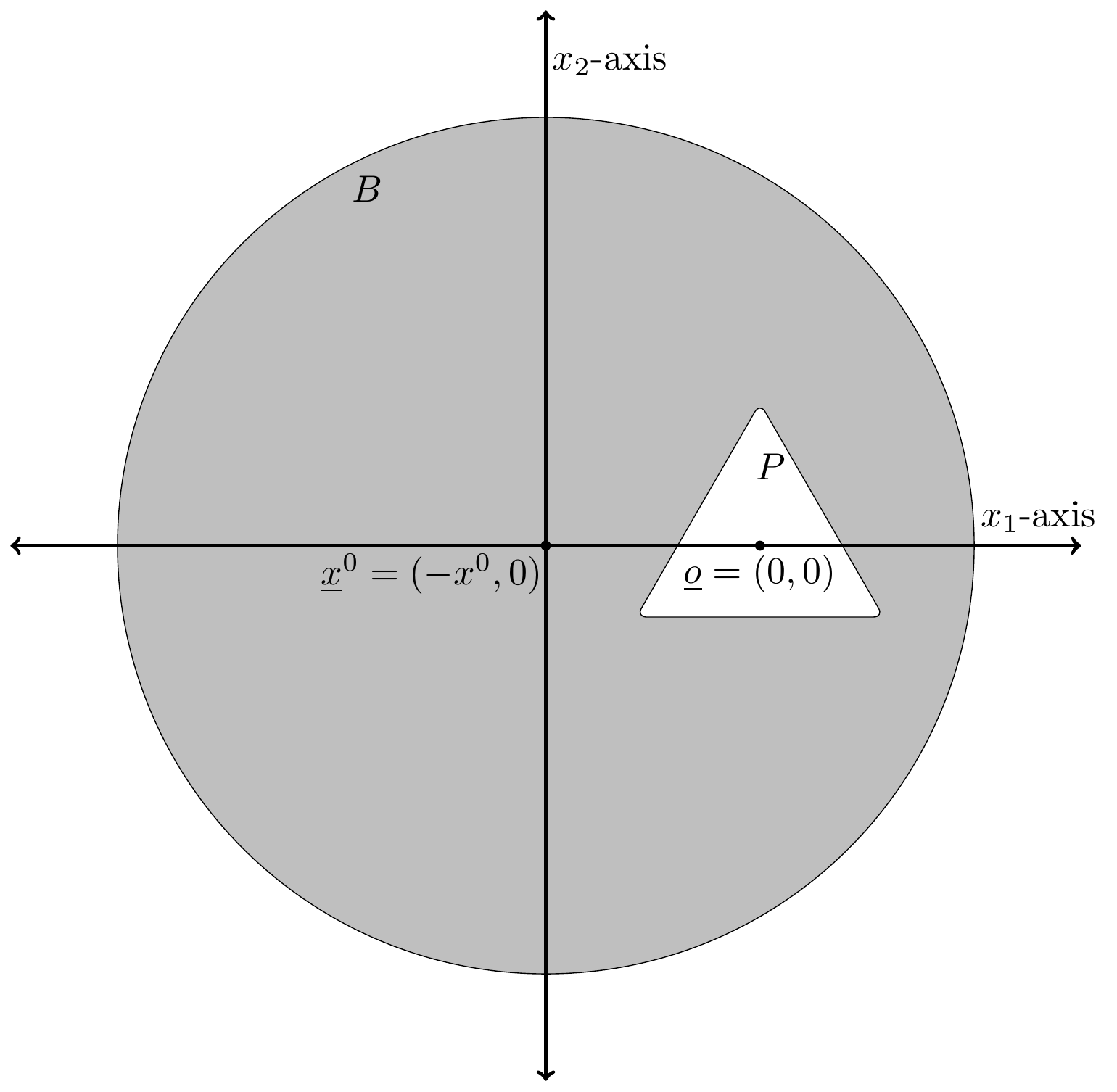}    }\hspace{10mm}
\subfloat{\includegraphics[width=0.4\textwidth]{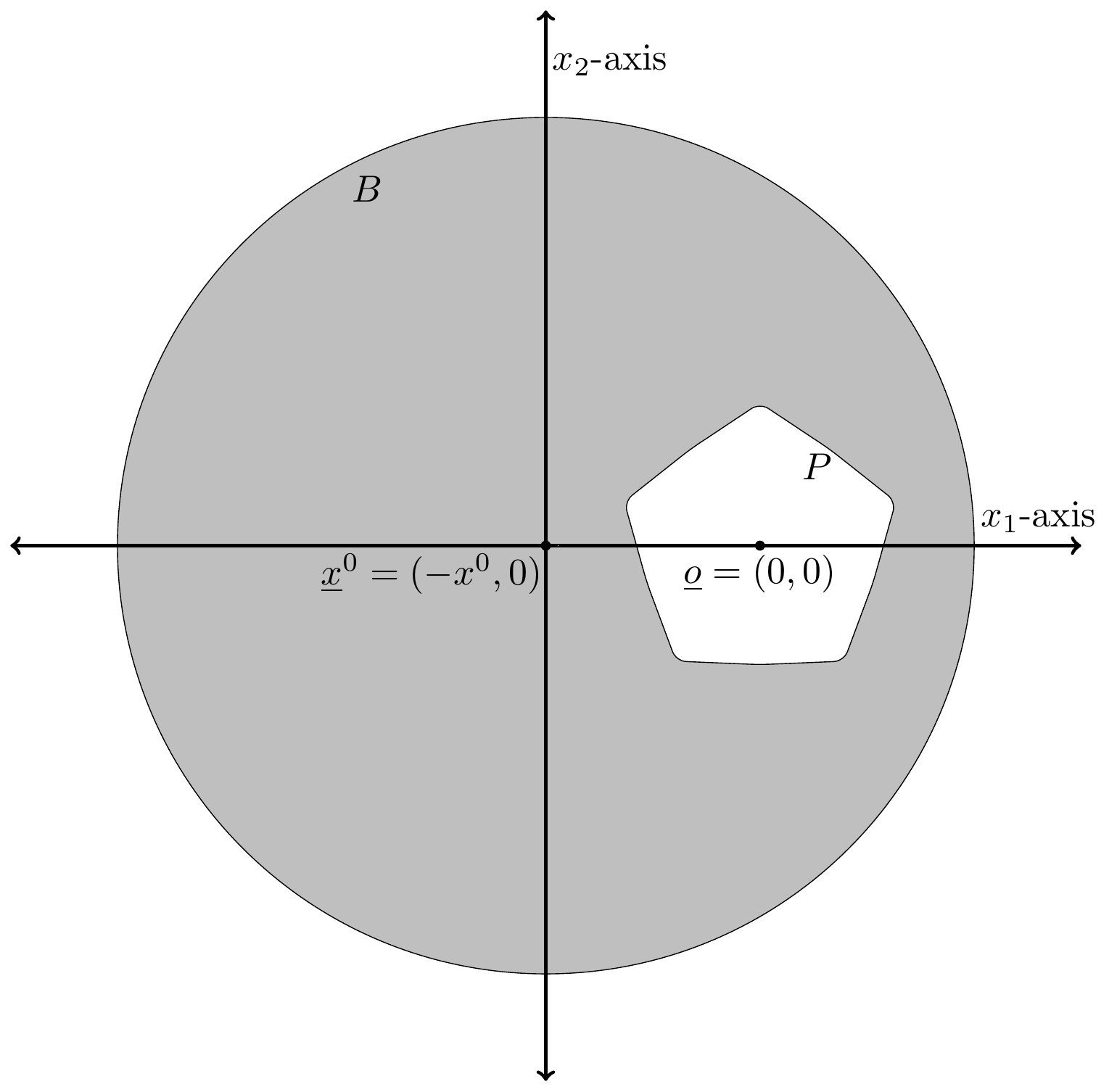}    }
\caption{Obstacles having $\mathbb{D}_3$ and $\mathbb{D}_5$ symmetry, respectively.
}\label{fig:domain_odd}
\end{figure}

It follows from the above conditions that the axes of symmetry of $P$ intersect in a unique point in the interior of $P$. We call this point the center $\underline{o}$ of $P$. Without loss of generality we assume that $\underline{o}$ is the origin $(0,0)$ of $\mathbb{R}^2$. The axes of symmetry of $P$ divide $\mathbb{R}^2$ in $2n$ components. We call each of these $2n$ components as sectors, and denote them by $S_i$, $1 \leq i \leq 2n$. We further make the following assumption:

\begin{assumption}. \label{assumption_B}
\begin{enumerate}[(d)]
\item the monotonicity of the boundary $\partial P$, that is, the distance $d(\underline{o},x)$, between the center $\underline{o}$ of $P$ and the point $x$ on the boundary $\partial P$ of $P$, is monotonic as a function of the argument $\phi$ in a sector delimited by two consecutive axes of symmetry of $P$. 
\end{enumerate}
\end{assumption}
We note that assumptions \ref{assumption_A} and \ref{assumption_B} imply that $P$ is a star-shaped domain with respect to its center $\underline{o}$. 
\begin{definition}[Incircle and circumcircle]
Let $P$ be a compact simply connected subset of $\mathbb{R}^2$ satisfying assumptions  \ref{assumption_A}, \ref{assumption_B} and centered at $\underline{o}$. By an incircle of $P$ we mean the largest circle in $\mathbb{R}^2$ centered at $\underline{o}$ that fits completely in $P$ and which is tangent to $\partial P$ in each of its $2n$ sectors. By a circumcircle of $P$ we mean the smallest circle in $\mathbb{R}^2$ centered at $\underline{o}$ that contains $P$ and which is tangent to $\partial P$ in each of its $2n$ sectors. Let $C_1(P)$ (resp. $C_2(P)$) denote the incircle (resp. the circumcircle) of $P$. When the set $P$ is fixed, we will simply refer to the incircle as $C_1$ and the circumcircle as $C_2$. Please note here that $C_1(\rho(P)) =C_1(P)$ and $C_2(\rho(P))= C_2(P)$ for each $\rho \in \mathbb{D}_n$. 
\end{definition}

\par  Let $co(A)$ denote the convex hull of a subset $A$ in $\mathbb{R}^2$ and let $\overline{co(A)}$ denote its closure. Clearly, for a compact simply connected subset $P$ of the Euclidean plane $\mathbb{E}^2$ satisfying Assumptions \ref{assumption_A} and \ref{assumption_B} we have, $P \subset \overline{co(C_2(P))}$ and hence $\rho(P) \subset \overline{co(C_2(P))}$ for each $\rho \in \mathbb{D}_n$. We now take an open disk $B$ in $\mathbb{R}^2$ with radius $r_1 >0$ such that $B \supset \overline{co(C_2(P))}$.
\begin{figure}[H]\centering

\subfloat[]{
\includegraphics[width=0.5\textwidth]{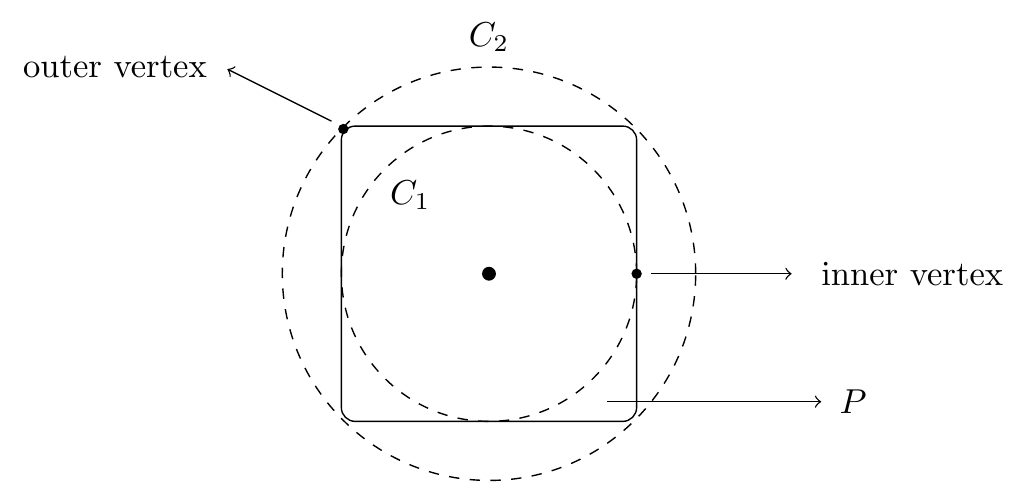}}
\subfloat[]{
\includegraphics[width=0.5\textwidth]{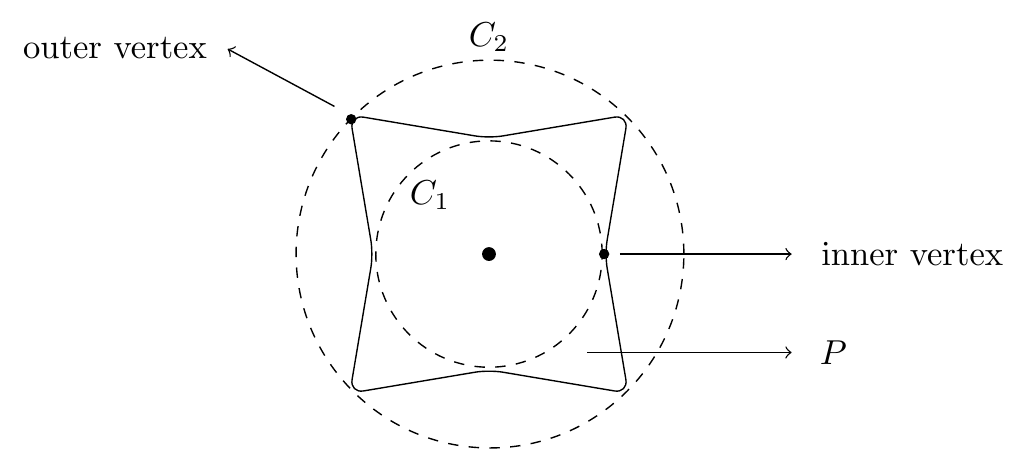}}\\
\subfloat[]{
\includegraphics[width=0.5\textwidth]{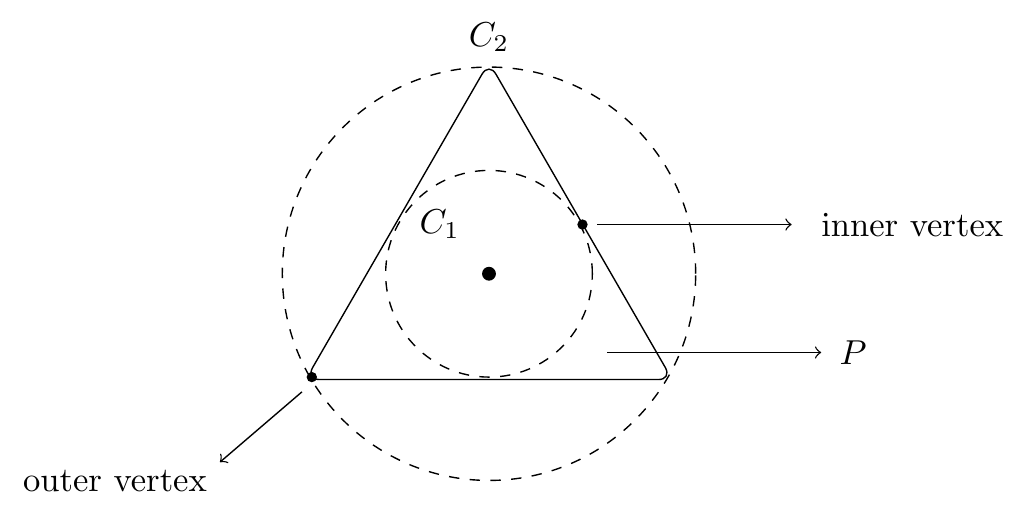}}
\subfloat[]{
\includegraphics[width=0.5\textwidth]{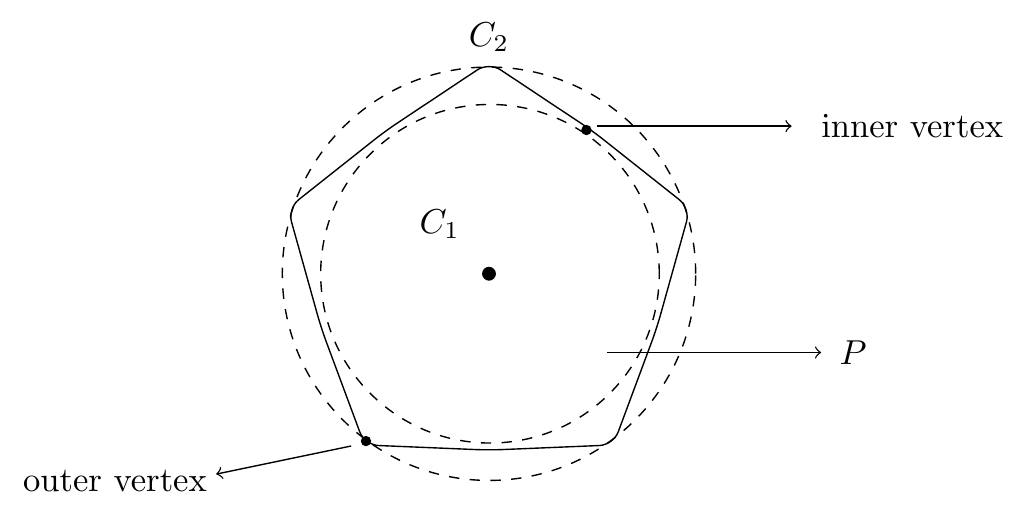}}
\label{Vertex}
\caption{Vertices of $P$ }
\end{figure}

\subsection{The OFF and the ON positions}
Let $n$ to be a positive integer, $n \geq 3$. For $P$,  a compact simply connected subset of $\mathbb{R}^2$ satisfying assumptions  \ref{assumption_A} and \ref{assumption_B}, recall that $C_1$ and $C_2$ denote the incircle and the circumcircle of $P$ respectively. We define the {\it inner vertex set} $V_{in}$ and the {\it outer vertex set}  $V_{out}$ of $P$ as follows:  $$V_{in}:= \partial P \cap C_1 ~~~ \mbox{ and } ~~~~~ V_{out}:= \partial P \cap C_2.$$ By a vertex set $V$ we simply mean $V_{in} \cup V_{out}$. Elements of $V_{in}$ (resp. $V_{out}$) will be called {\it inner vertices} (resp. {\it outer vertices}) {\it of} $P$. Elements of $V$ will simply be referred to as {\it vertices of} $P$. A radial segment of the incircle $C_1$ of $P$ containing an inner vertex will be referred to as an inradius of $P$, and likewise, a radial segment of the circumcircle $C_2$ of $P$ containing an outer vertex of $P$ will be referred to as a circumradius of $P$.

\par  
As described in section \ref{TheDomain}, let $P$ be a compact simply connected subset of $\mathbb{R}^2$ satisfying assumptions \ref{assumption_A}, \ref{assumption_B}; and let $B$ be an open disk in $\mathbb{R}^2$ of radius $r_1$ such that $B \supset \overline{co(C_2(P))}$. {Since $\lambda_1$ is invariant under isometries of $\mathbb{R}^n$, without loss of generality we make the following assumptions: (a) The centers of $B$ and $P$ are on the $x_1$-axis, (b) the center of $P$ is at the origin, and (c) the center of $B$ is on the negative $x_1$-axis. We say that $P$ is in an OFF position with respect to $B$ if an inner vertex of $P$ is on the negative $x_1$-axis and that $P$ is in an ON position if an outer vertex of $P$ is on the negative $x_1$-axis.} 

{
If two vertices of $P$ lie on the same axis of smmetry of $P$ then they are called opposite vertices of each other. Note here that, if a vertex of $P$ is on the negative $x_1$-axis then the corresponding opposite vertex of $P$ is going to be on the positive $x_1$-axis. For $n$ even, the vertex opposite to an inner vertex is also an inner vertex. Whereas, for $n$ odd, the vertex opposite to an inner vertex is going to be an outer vertex and vice versa. Therefore, for $n$-odd, we can say that $P$ is in an OFF position with respect to $B$ if an outer vertex of $P$ is on the positive $x_1$-axis and that $P$ is in an ON position if an inner vertex of $P$ is on the positive $x_1$-axis. But this isn't true for $n$ even.}
\begin{figure}[H]\centering
\subfloat[OFF configuration]{
\label{Off position}
\includegraphics[width=0.18\textwidth]{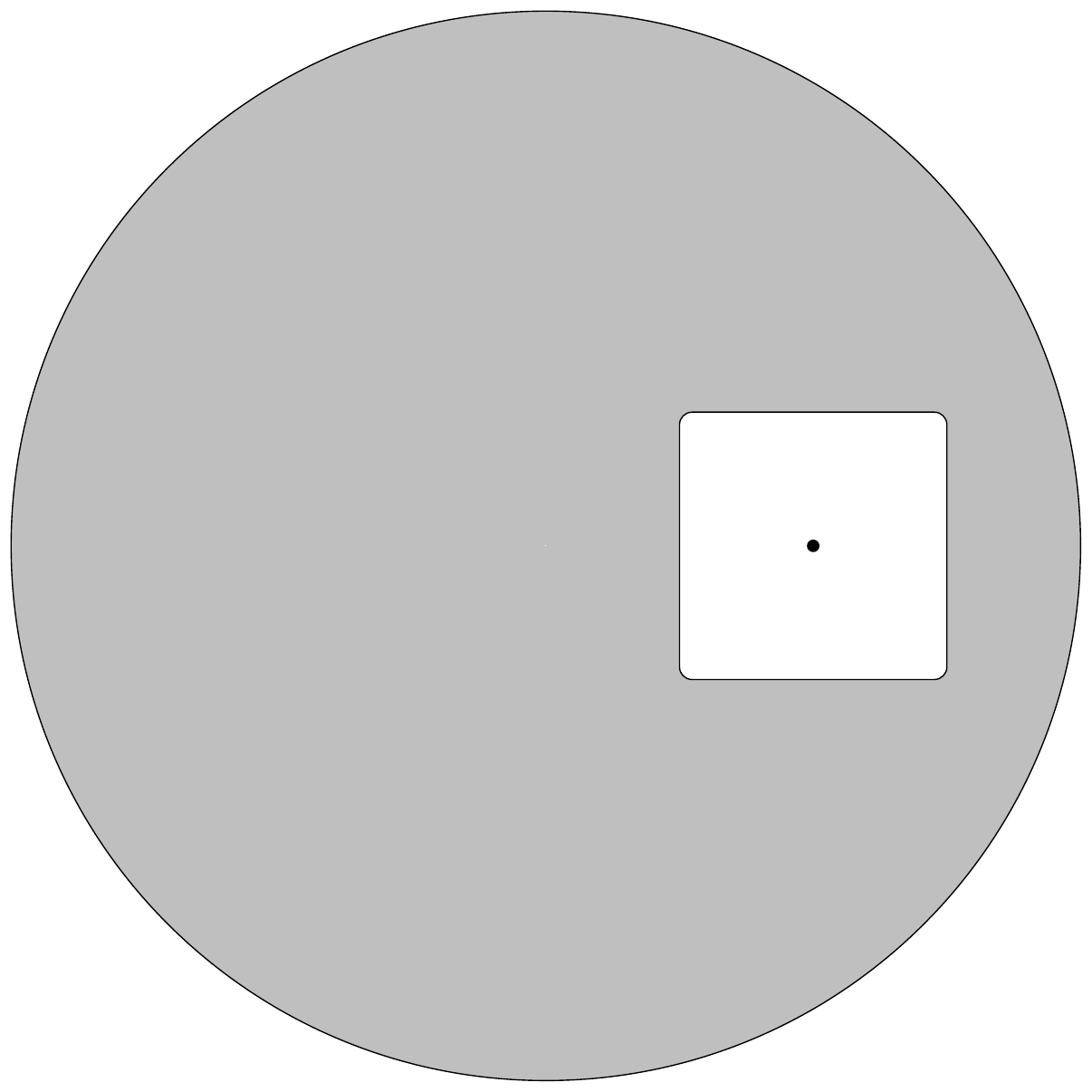}}
\hspace{12mm}
\subfloat[ON configuration]{
\label{On position}
\includegraphics[width=0.18\textwidth]{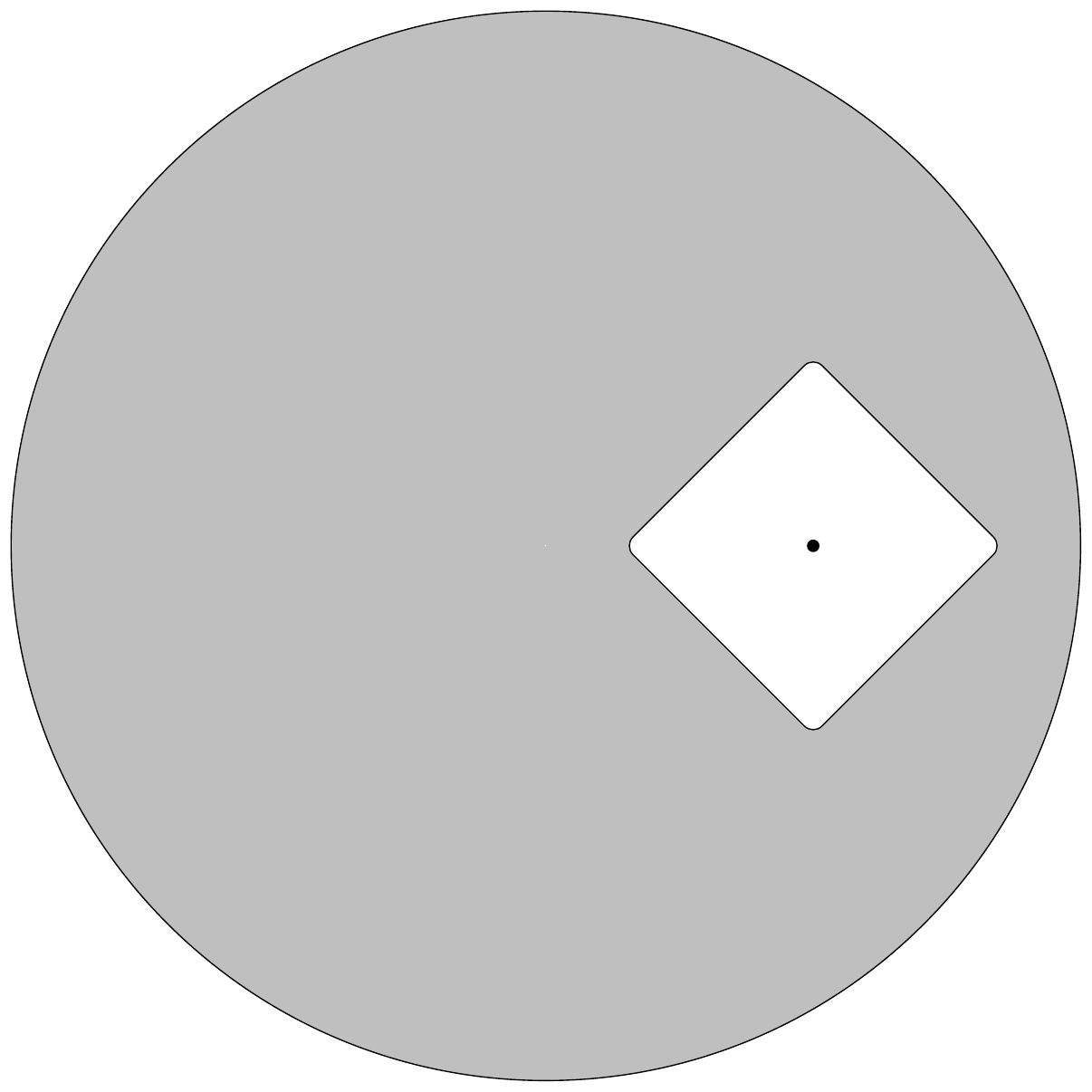}}
\hspace{14mm}
\subfloat[OFF configuration]{
\label{Off position2}
\includegraphics[width=0.18\textwidth]{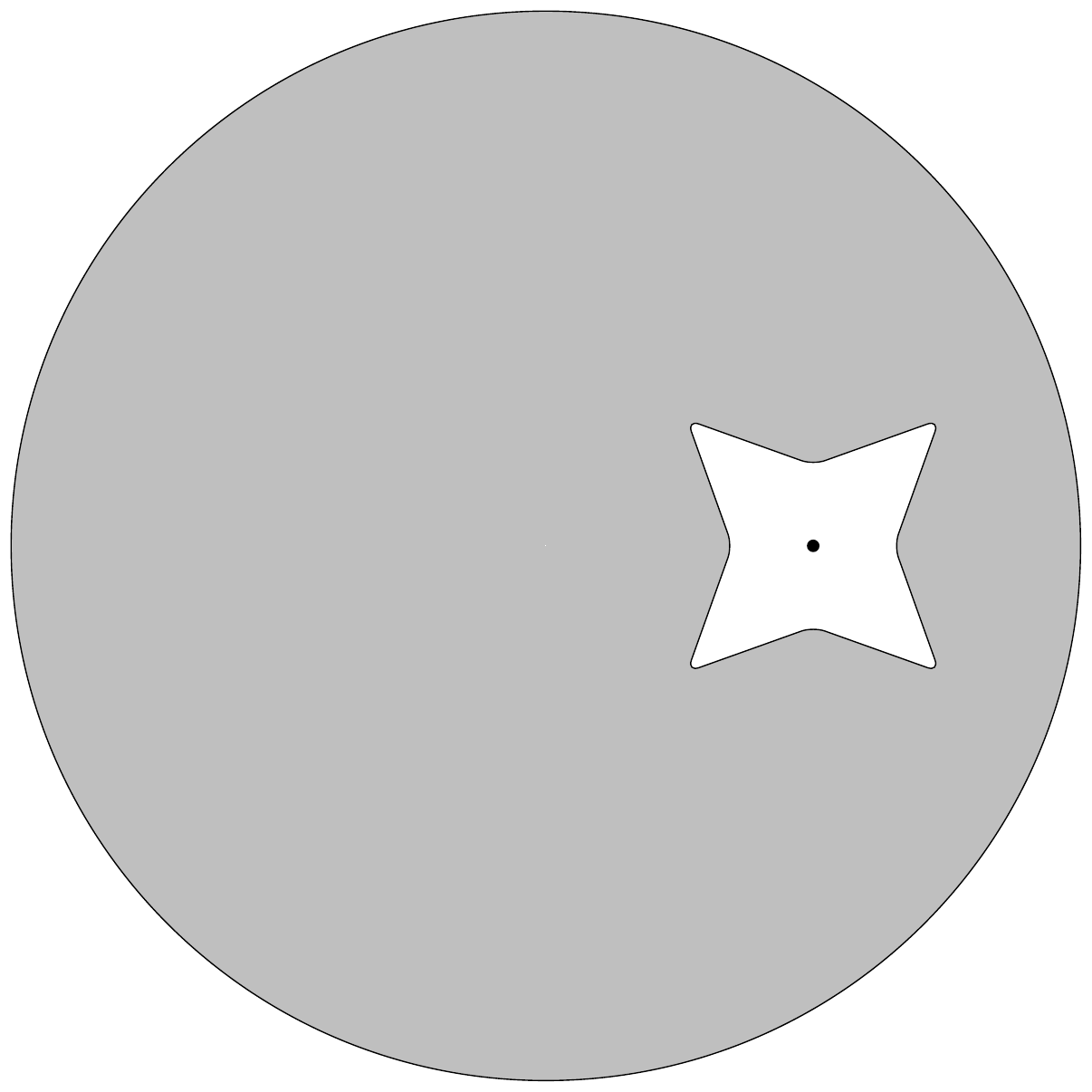}}
\hspace{12mm}
\subfloat[ON configuration]{
\label{On position2}
\includegraphics[width=0.18\textwidth]{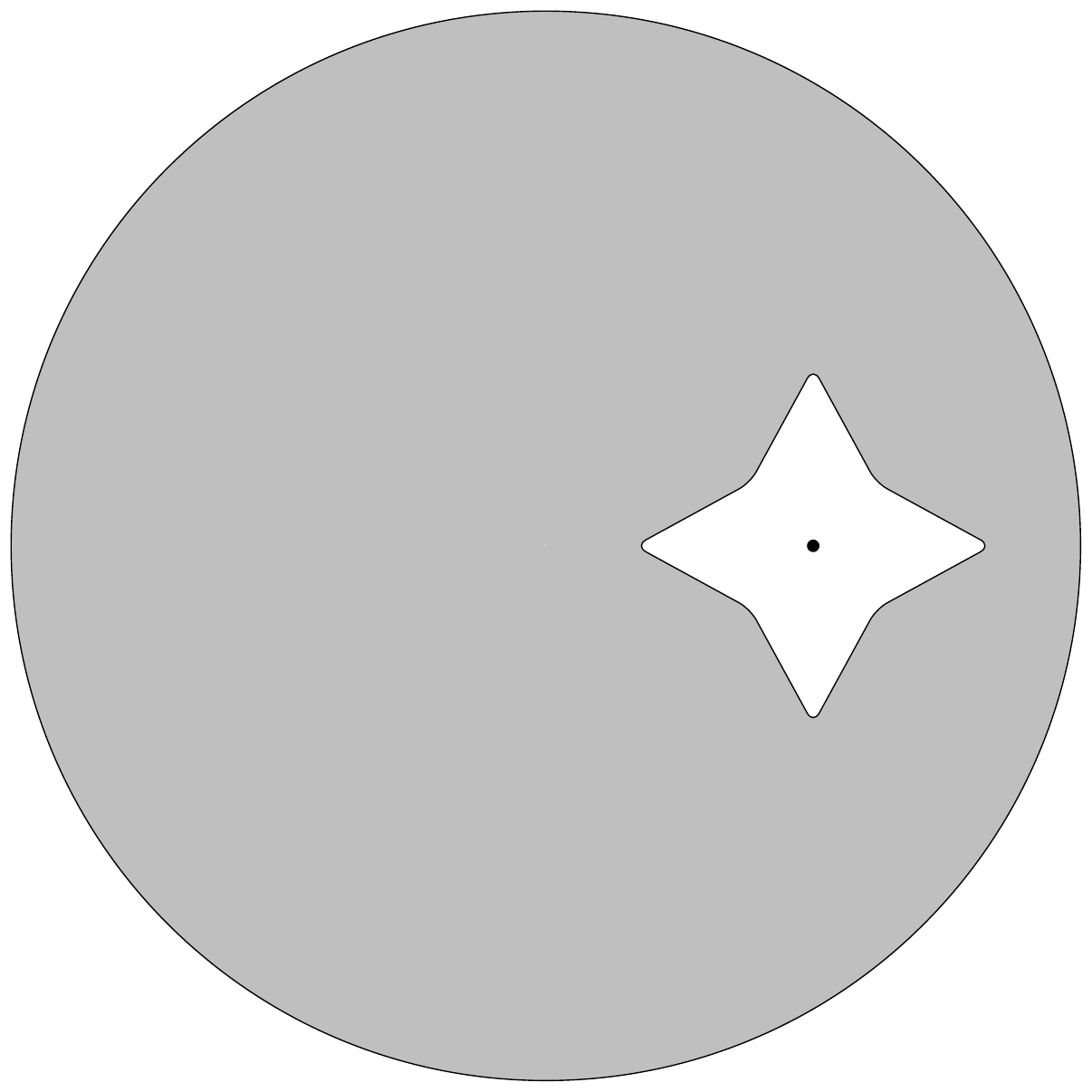}}
\caption{OFF and ON configurations for obstacles having $\mathbb{D}_4$ symmetry}\label{fig:on_off}
\end{figure}

\begin{figure}[H]\centering
\subfloat[OFF configuration]{
\label{Off position_triangle}
\includegraphics[width=0.18\textwidth]{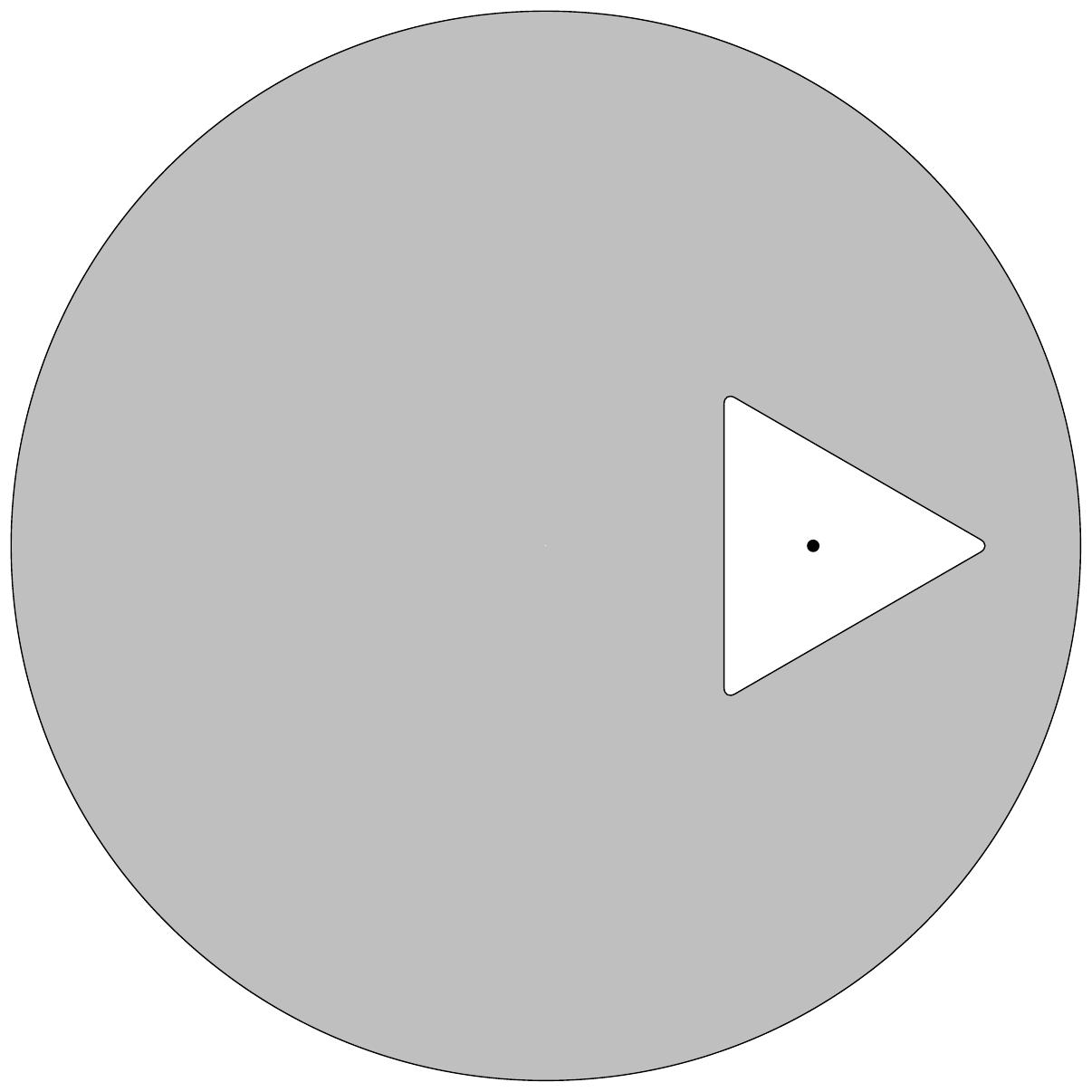}}
\hspace{12mm}
\subfloat[ON configuration]{
\label{On position_triangle}
\includegraphics[width=0.18\textwidth]{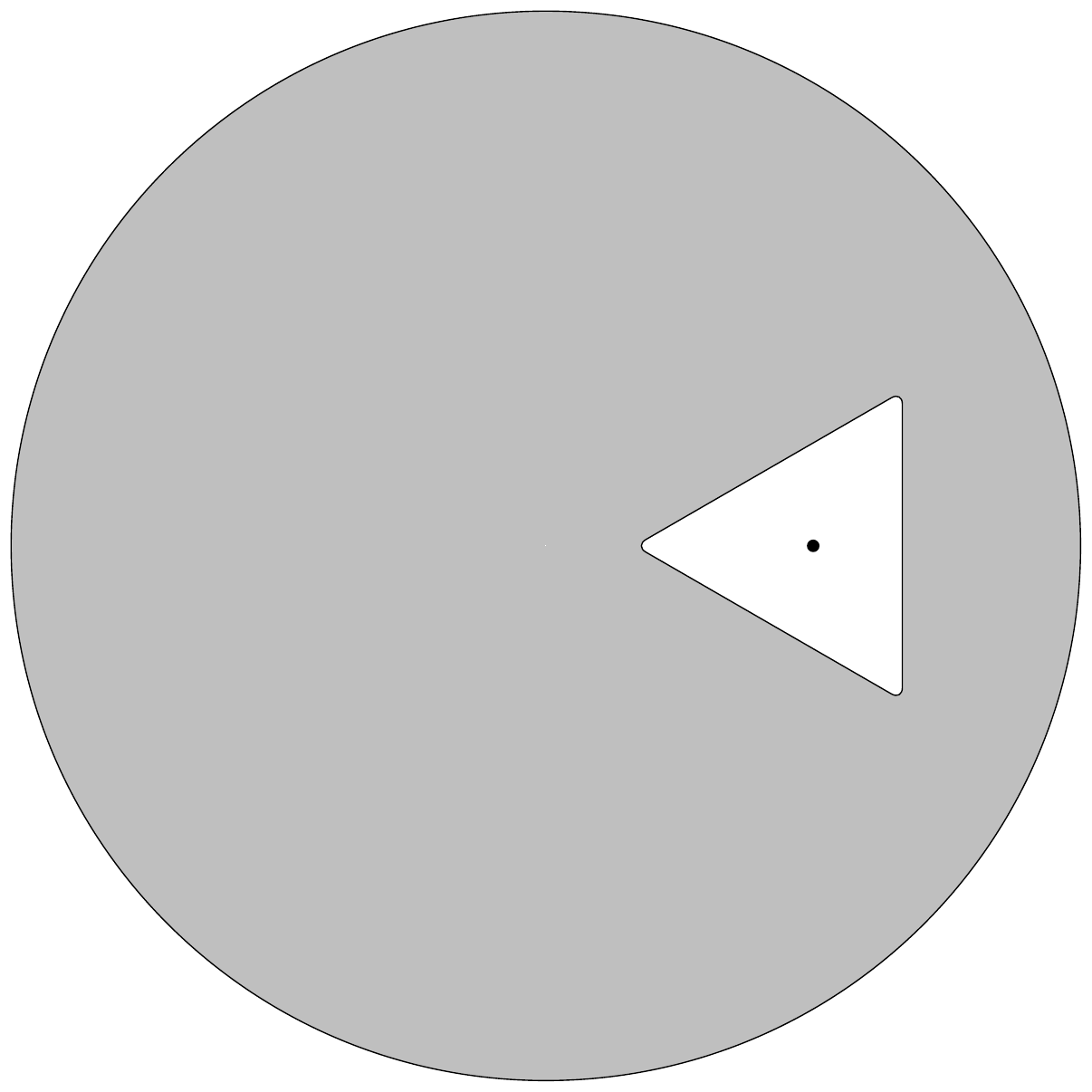}}
\hspace{14mm}
\subfloat[OFF configuration]{
\label{Off position_pentagon}
\includegraphics[width=0.18\textwidth]{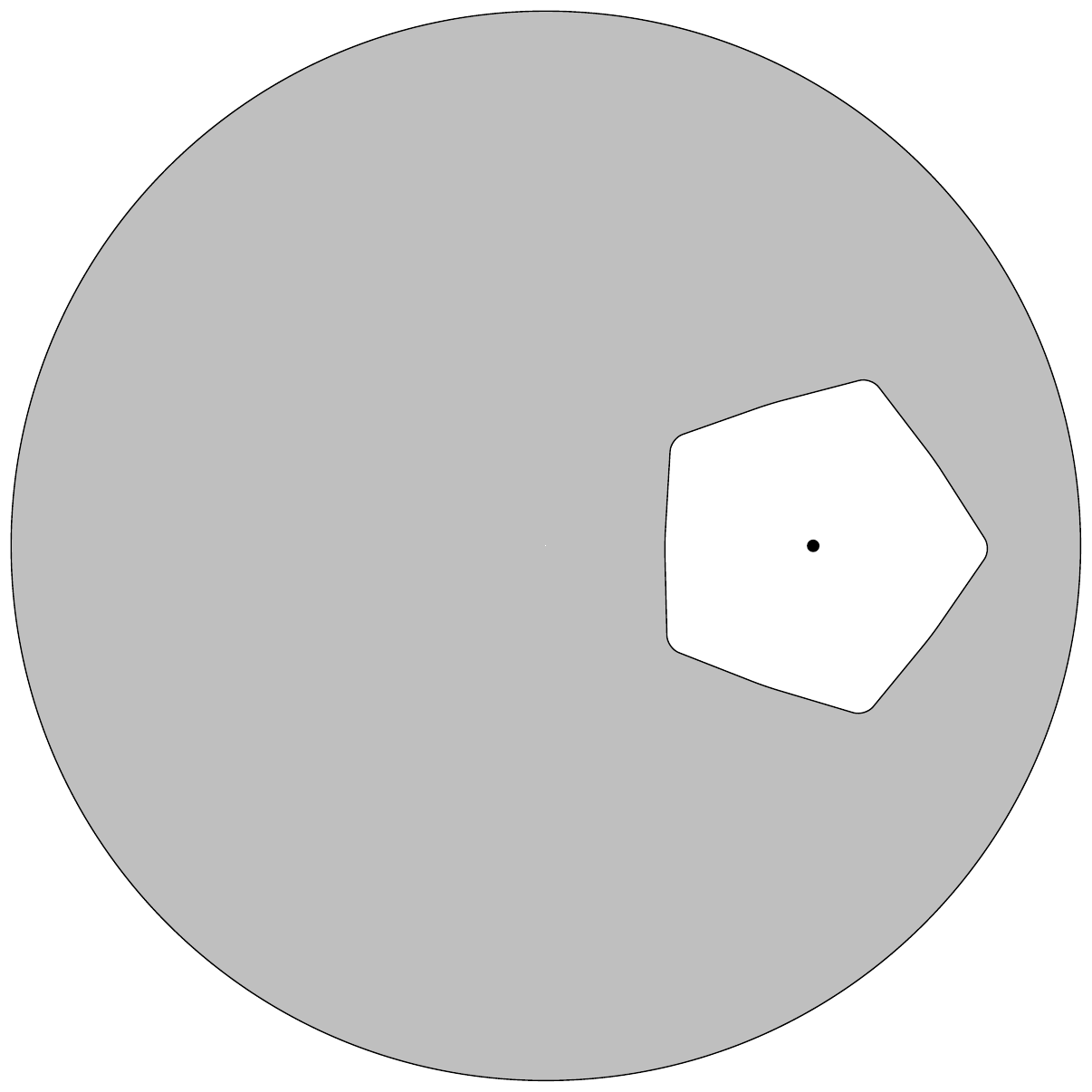}}
\hspace{12mm}
\subfloat[ON configuration]{
\label{On position_pentagon}
\includegraphics[width=0.18\textwidth]{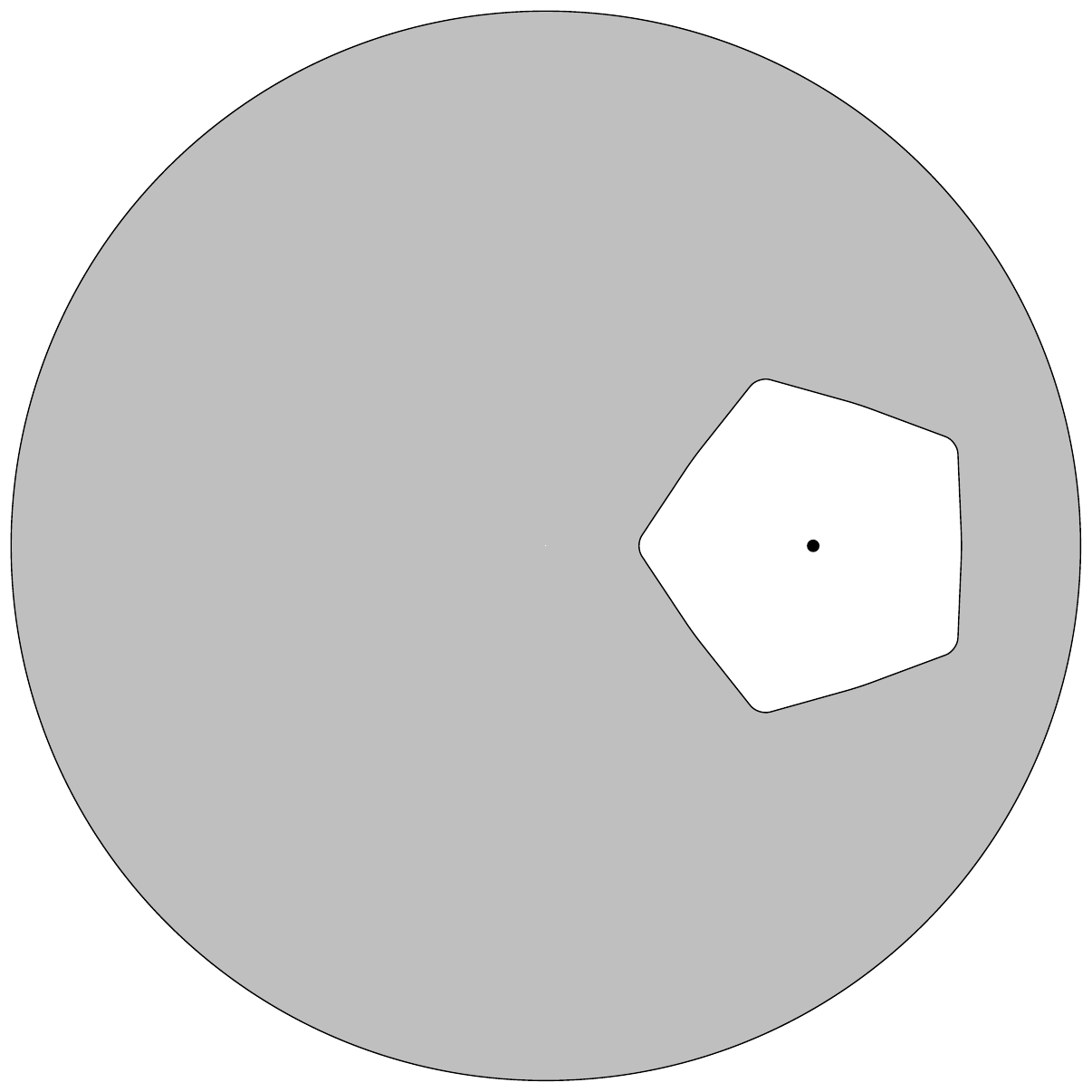}}
\caption{OFF and ON configurations for obstacles having $\mathbb{D}_n$ symmetry, $n$ odd}\label{fig:on_off_odd}
\end{figure}

\section{Auxiliary results}\label{Prel}
The lemmas proved in this section, viz., Lemmas \ref{bound_monot} and \ref{properties_eta}, are useful {in proving Propositions \ref{critical_points} and \ref{complete_critical_points}, and hence,} in proving our main theorem, viz., Theorem \ref{max_min}. 
\subsection{Certain monotonicity property on the boundary of a disk $B$}
In Lemma \ref{bound_monot}, we prove a monotonicity property on the boundary of an arbitrary disk $B$ using the representation of $B$ in polar coordinates with respect to a point other than its center.
\begin{lemma}\label{bound_monot} Let $B((-x_0,0), r_1)$ be a disk in $\mathbb{R}^2$ with center at $(-x_0, 0)$ and radius $r_1>0$ such that $0< x^0<r_1$. Let $\lbrace{re^{i\phi}: \phi\in[0,2\pi), 0\leq r < g(\phi)\rbrace}$ be a  representation $B$ in polar co-ordinates, where $g: [0,2 \pi] \rightarrow [0, \infty)$ is a $\mathcal{C}^2$ map with $g(0)= g(2 \pi)$. Here, the polar coordinates $(r,\phi)$ are measured with respect to the origin $(0,0)$ and the positive $x_1$-axis of $\mathbb{R}^2$. 
Then, the distance $d(\phi)$ of a point $g(\phi) \, e^{i \phi}$ on $\partial B$ from $(0,0)$ is a strictly increasing function of $\phi$ in $[0,\pi]$, and is a strictly decreasing function of $\phi$ in $[\pi,2\pi]$. \end{lemma}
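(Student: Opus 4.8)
The plan is to reduce the statement to a one-variable calculus problem by first writing $d(\phi)=g(\phi)$ in closed form and then analysing the sign of $d'(\phi)$. To obtain the closed form, I would intersect the ray $\{r e^{i\phi}:r\ge 0\}$ issuing from the origin with the circle $\partial B$. Substituting the point $(r\cos\phi,\,r\sin\phi)$ into the equation $(x_1+x_0)^2+x_2^2=r_1^2$ of $\partial B$ gives the quadratic
\[
r^2+2x_0\cos\phi\,r+(x_0^2-r_1^2)=0 .
\]
Because $0<x_0<r_1$, the origin lies strictly inside $B$, so the two roots have opposite signs and the ray meets $\partial B$ in exactly one point with $r>0$; taking that root yields
\[
d(\phi)=g(\phi)=-x_0\cos\phi+\sqrt{r_1^2-x_0^2\sin^2\phi},
\]
which is well defined and $\mathcal{C}^2$ on $[0,2\pi]$ since $r_1^2-x_0^2\sin^2\phi\ge r_1^2-x_0^2>0$.

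Next I would differentiate and factor. A direct computation gives
\[
d'(\phi)=x_0\sin\phi\left(1-\frac{x_0\cos\phi}{\sqrt{r_1^2-x_0^2\sin^2\phi}}\right).
\]
On the interval $(0,\pi)$ the factor $x_0\sin\phi$ is strictly positive, so everything hinges on showing that the bracketed factor is positive there; this is the one genuine obstacle in the proof. When $\cos\phi\le 0$ the bracket is at least $1$, so the only delicate range is $\phi\in(0,\pi/2)$, where $\cos\phi>0$. There both sides of $x_0\cos\phi<\sqrt{r_1^2-x_0^2\sin^2\phi}$ are positive, so I may square: the inequality becomes $x_0^2\cos^2\phi<r_1^2-x_0^2\sin^2\phi$, i.e.\ $x_0^2<r_1^2$, which is precisely the hypothesis $x_0<r_1$. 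Hence the bracket is strictly positive on all of $(0,\pi)$ and $d'(\phi)>0$ there. This is the only step in which the assumption $x_0<r_1$ is used, and it is exactly what is needed.

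Finally, since $d'>0$ on the open interval $(0,\pi)$ and $d$ is continuous on $[0,\pi]$, the function $d$ is strictly increasing on $[0,\pi]$ (the vanishing of $d'$ at the endpoints $\phi=0,\pi$, where $\sin\phi=0$, does not affect strict monotonicity on the closed interval). For the remaining interval I would invoke the reflection symmetry in the $x_1$-axis: since $\cos$ and $\sin^2$ are invariant under $\phi\mapsto 2\pi-\phi$, we have $d(2\pi-\phi)=d(\phi)$, so strict increase on $[0,\pi]$ forces strict decrease on $[\pi,2\pi]$. Equivalently, one checks directly that on $(\pi,2\pi)$ the factor $x_0\sin\phi<0$ while the bracketed factor stays positive by the same squaring argument. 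This completes the plan; the whole argument is elementary once the closed form for $g$ is in hand, the only substantive point being the positivity of the bracketed factor, which rests on $x_0<r_1$.
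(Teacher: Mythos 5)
Your proof is correct, but it takes a different route from the paper's. You solve the quadratic $r^2+2x_0\cos\phi\,r+(x_0^2-r_1^2)=0$ to get the explicit closed form $d(\phi)=-x_0\cos\phi+\sqrt{r_1^2-x_0^2\sin^2\phi}$, differentiate, and reduce everything to the single inequality $x_0^2<r_1^2$ via the squaring argument; the paper never writes down $g(\phi)$ explicitly. Instead it works in Cartesian coordinates and factors $d$ as a composition: it shows that the norm $h(x_1)=(r_1^2-2x_1x^0-(x^0)^2)^{1/2}$ of a boundary point is strictly decreasing in the abscissa $x_1$, and separately that $x_1\mapsto\phi$ is a strictly decreasing bijection of $(-x^0-r_1,-x^0+r_1)$ onto $(0,\pi)$ (by computing $d\phi/dx_1$ and checking its sign, plus a surjectivity argument), then concludes that $d$ is increasing in $\phi$ as a composition of two decreasing maps. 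Your version buys a shorter and more self-contained argument: the delicate step is isolated in one transparent inequality, and the symmetry $d(2\pi-\phi)=d(\phi)$ disposes of $[\pi,2\pi]$ immediately, whereas the paper must assert that the lower-semicircle case is ``similar.'' The paper's version buys independence from the explicit root formula, which is a matter of taste here since the circle is simple enough that the closed form is harmless. Both arguments use the hypothesis $0<x^0<r_1$ in the same essential way (yours as $x_0^2<r_1^2$ when squaring, the paper's as $-r_1^2<x^0(x^0+x_1)<r_1^2$ when signing $d\phi/dx_1$), and your handling of the endpoints and of the well-definedness of the square root is careful and complete.
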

\begin{proof}
\begin{figure}[H]\centering
\includegraphics[width=0.6\textwidth]{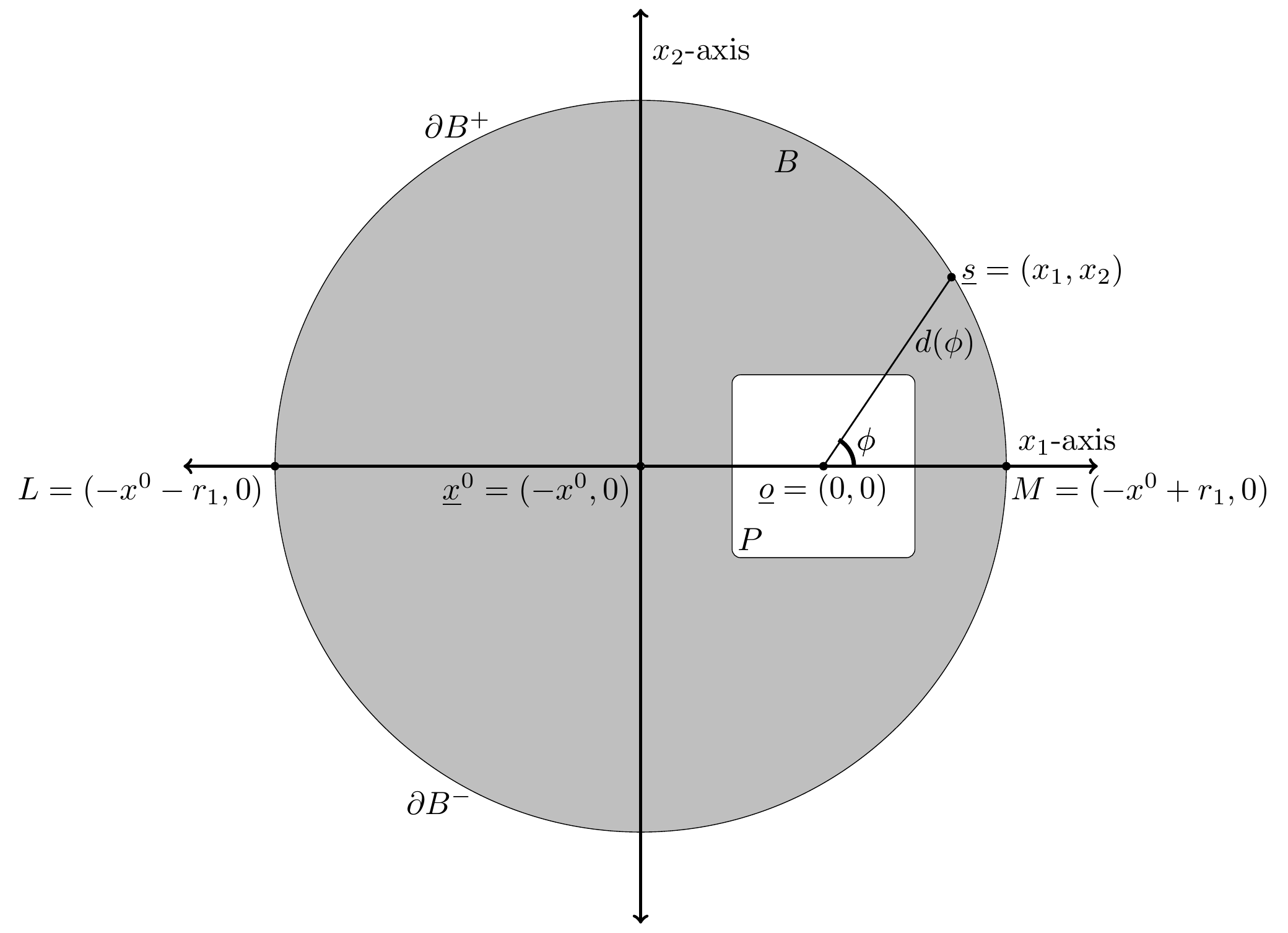}   \\
\includegraphics[width=0.6\textwidth]{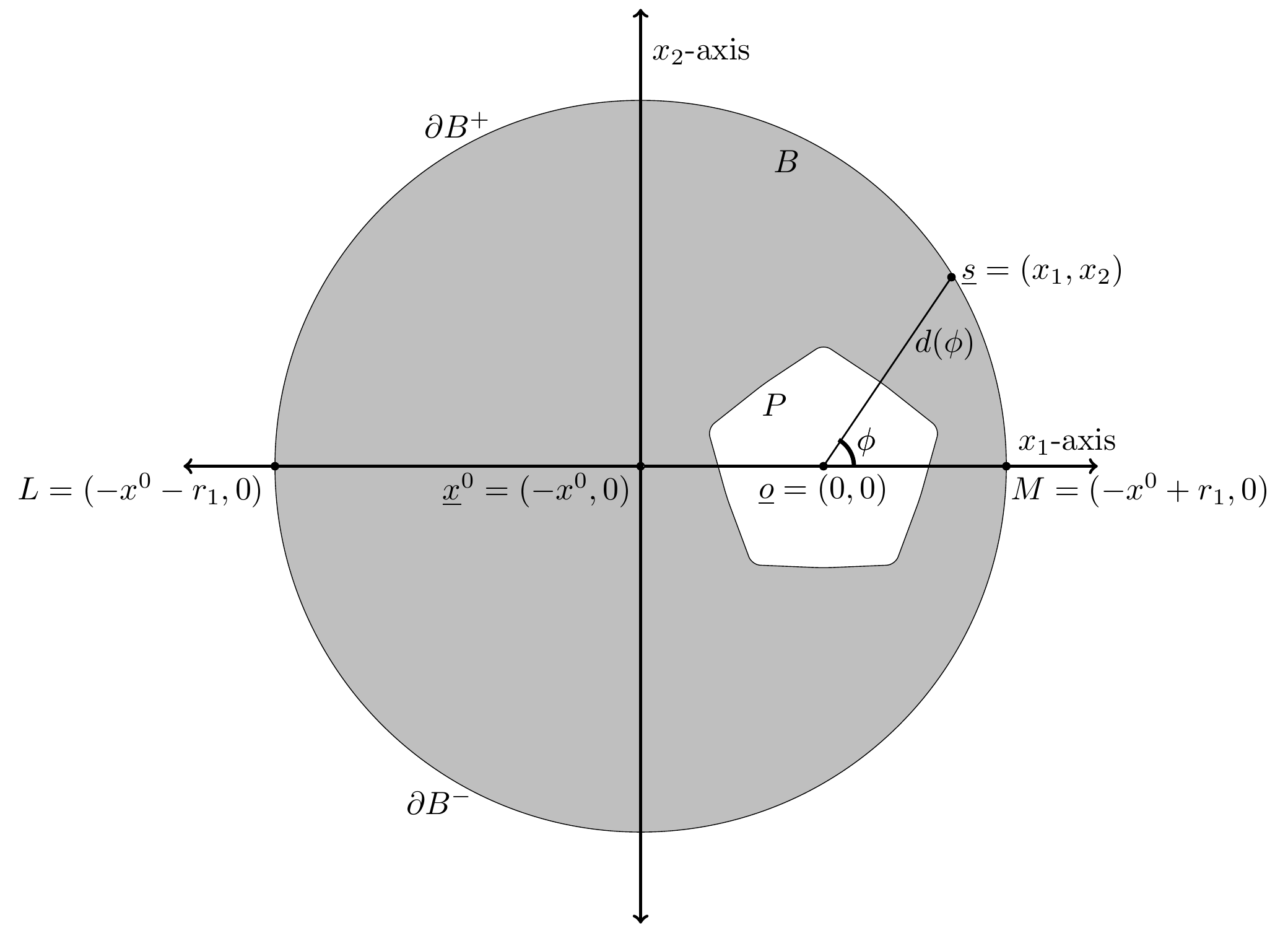}    
    
\caption{The distance function $d$ for the boundary points}\label{fig:geom}
\end{figure}

 Let $\partial{B}^+$ be defined as $\{ g(\phi)\, e^{i \phi} \in \partial B \, | \, \phi \in [0,\pi]\} \subset \partial B$. Similarly, we define $\partial{B}^-$ as the set $ \{ g(\phi)\, e^{i \phi} \in \partial B \, | \, \phi \in [\pi, 2 \pi) \}$. We will prove that $d(\phi)$ is a strictly increasing function of $\phi$ in $[0,\pi]$. The proof for $\phi \in [\pi, 2\pi]$ is similar. 

Let $(x_1,x_2)$ denote the Cartesian coordinate of a point $g(\phi) \, e^{i \phi} \in \partial B^+$ as shown in Figure \ref{fig:geom}. Then, $x_2 \geq 0$ and $(x_1+x^0)^2 + x_2^2 =r_1^2$. We will first show that the Euclidean norm of the point $(x_1,x_2) \in \partial B^+$, is a monotonic function of $x_1$ for all $(x_1,x_2) \in \partial B^+$. Here $x_1 \in [ -x^0-r_1, -x^0+ r_1]$.
We thus consider 
$\|(x_1, x_2)\|= d((x_1,x_2), (0,0))$ subject to $(x_1+x^0)^2+x_2^2=r_1^2$.
Now, $\|(x_1, x_2)\|=(x_1^2 + x_2^2)^{\frac{1}{2}} = (x_1^2 + r_1^2- (x_1+x^0)^2)^{\frac{1}{2}}= ( r_1^2- 2\, x_1 \, x^0 - (x^0)^2)^{\frac{1}{2}}=: h(x_1) >0$. Therefore, $h'(x_1) =\dfrac{-x^0}{h(x_1)} <0$ for $(x_1,x_2) \in \partial B^+$. Hence, $h$ is a strictly decreasing function of $x_1$ for $(x_1,x_2) \in \partial B^+$. We also note that $h(x_1) = \|(x_1, x_2)\|=|g (\phi)|=d(\phi)$ for $(x_1, x_2) = g(\phi) e^{i \phi} \in \partial B^+$, $ \phi \in [0,\pi]$.  

Next we show that $x_1 = x_1(\phi)$ is a monotonic decreasing function of $\phi$. We have $x_1 = \|(x_1, x_2)\| \, \cos \phi = h(x_1) \, \cos \phi$.
Hence, 
$\cos(\phi) = \frac{x_1}{h(x_1)}$.
Consider $\phi : (-x^0-r_1,-x^0+r_1)\rightarrow (0, \pi)$. Then, $$\dfrac{d \phi}{d x_1} = -\dfrac{h(x_1)^2+ x^0 \, x_1}{h(x_1)^3} \, \dfrac{1}{\sin \phi} = -\dfrac{h(x_1)^2+ x^0\, x_1}{x_2 \, h(x_1)^2 } = -\dfrac{r_1^2- x^0\, x_1-(x^0)^2}{x_2 \, h(x_1)^2}= -\dfrac{r_1^2- x^0\,( x_1+x^0)}{x_2 \, h(x_1)^2}.$$ 
Since $|x_1+x^0| < r_1$ and $0<x^0<r_1$ we get, $-r_1^2 < x^0 (x^0 +x_1) < r_1^2$. This implies that $\dfrac{d \phi}{d x_1} <0 $ on  $(-x^0-r_1,-x^0+r_1)$. 
Thus, $\phi$ as a function of $x_1$ is strictly decreasing and hence injective on $ (-x^0-r_1,-x^0+r_1)$. 

Finally, we show that $\phi : (-x^0-r_1,-x^0+r_1) \rightarrow (0, \pi)$ is surjective. Let $\theta \in (0, \pi)$, define $x_1 = g(\theta) \cos \theta \in  (-x^0-r_1,-x^0+r_1)$, by the definition of $g$. 

Hence, $\phi : (-x^0-r_1,-x^0+r_1) \rightarrow (0, \pi)$ is a bijective and strictly decreasing function of $x_1$. Since the distance function $d(\phi)$ is decreasing with respect to $x_1$, it is increasing with respect to $\phi$. This proves the lemma. \end{proof}
\subsection{About a planar simply connected bounded domain $K$ }\label{K}
In this section, we consider a planar simply connected bounded domain $K$ and represent it in polar co-ordinates with respect to the origin of $\mathbb{R}^2$. We consider the unit outward normal vector field to $K$ on its boundary $\partial K$. Call this vector field $\eta$. We derive an expression for $\eta$ in the polar co-ordinates. We then consider a smooth vector field $v$ in $\mathbb{R}^2$ that rotates the domain $K$ by a right angle about the origin in the anticlockwise direction. We then derive an expression, in polar coordinates, for the inner product of these two vector fields evaluated at a boundary point. All these expressions are put together in the form of Lemma \ref{properties_eta}.
 
Now, in polar co-ordinates, the planar simply connected  bounded domain $K$ can be given by
%
 $
K = \lbrace{re^{i\phi} 
: \phi\in[0,2\pi), 0\leq r < h(\phi)\rbrace} \subset \mathbb{R}^2,
$
where $h$ is a positive, bounded and $2\pi$-periodic function of class $\mathcal{C}^2$. Let $v \in \mathcal{C}_0^\infty(\mathbb{R}^2)$ be a smooth vector field whose restriction to $\partial K$ is given by 
$v(x_1,x_2) = (-x_2,x_1) \; \forall (x_1,x_2) \in \partial K.
$
This implies 
$v \left( h(\phi) \left(\cos \phi, \sin \phi \right) \right)= h(\phi) \left(-\sin \phi, \cos \phi \right) \; \forall \phi \in [0, 2 \pi).
$
Treating $\mathbb{R}^2$ as the complex plane $\mathbb{C}$, one can write $v$ as 
$v(\zeta) = \textbf{i}\zeta \;\forall \zeta = h(\phi)e^{\textbf{i}\phi} \in \partial K, 
$
which is equivalent to saying that
$v(\phi):= v\left( h(\phi)e^{\textbf{i}\phi}\right)= \textbf{i} h(\phi) \, e^{\textbf{i} \phi} \; \forall \phi \in \mathbb{R}.
$

Denote by $\eta$ the unit outward normal vector field to $K$ on $\partial K$. For $\alpha \in [0, 2 \pi]$, let $z_\alpha := \{ r e^{i \alpha} \,| \, r \in \mathbb{R}\}$ denote the line in $\mathbb{R}^2$ corresponding to angle $\phi=\alpha$ represented in polar co-ordinates. Clearly, $z_\alpha = z_{\alpha+ \pi}$ for each $\alpha \in [0, 2\pi]$ where the addition is taken modulo $2\pi$. 

We now prove the following auxiliary lemma.
\begin{lemma}\label{properties_eta}
Let $K, h, v, \eta$ and $z_\alpha$ be as defined above. Then at any point $ h(\phi)e^{\textbf{i}\phi}$ of $\partial K$
, we have the following:
\begin{enumerate}[i)]
\item $\eta(\phi) := \eta(h(\phi)e^{\textbf{i}\phi}) = \dfrac{h(\phi)e^{\textbf{i}\phi}-\textbf{i}h^\prime(\phi)e^{\textbf{i}\phi}}{\sqrt{h^2(\phi)+(h^\prime(\phi))^2}}~\; \forall \phi \in \mathbb{R}$,
\item $\left<\eta , v\right>(\phi) := \left<\eta , v\right> (h(\phi)e^{\textbf{i}\phi})  = \dfrac{-h(\phi)h^\prime(\phi)}{\sqrt{h^2(\phi)+(h^\prime(\phi))^2}}~ \; \forall \phi \in \mathbb{R}$. Hence $\left<\eta , v\right>$ has a constant sign on an interval $I \subset \mathbb{R}$ iff $h$ is monotonic in $I$.
\item If for some $\alpha \in [0, 2 \pi)$, the domain $K$ is symmetric with respect to the axis $z_{\alpha}$ then, for each $\theta \in [0, \pi]$,
$
\left<\eta , v\right>(\alpha+\theta) = -\left<\eta , v\right>(\alpha-\theta).
$
\end{enumerate}
\end{lemma}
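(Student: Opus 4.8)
The plan is to prove the three parts in order, since each builds naturally on the previous one using elementary polar-coordinate computations.

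\medskip

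\noindent\textbf{Part (i).} First I would parametrize the boundary curve as $\gamma(\phi) = h(\phi)(\cos\phi, \sin\phi)$ and compute the tangent vector by differentiating. Writing $\gamma'(\phi) = h'(\phi)(\cos\phi,\sin\phi) + h(\phi)(-\sin\phi,\cos\phi)$, I then obtain the outward normal by rotating the tangent clockwise by $\pi/2$, i.e. applying the map $(a,b)\mapsto(b,-a)$, and normalizing by $\|\gamma'(\phi)\| = \sqrt{h^2(\phi)+(h'(\phi))^2}$. Translating into complex notation, where rotation by $-\pi/2$ is multiplication by $-\mathbf{i}$ and the tangent is $\gamma'(\phi) = (h'(\phi) + \mathbf{i}h(\phi))e^{\mathbf{i}\phi}$, one checks that the unnormalized outward normal is $(h(\phi) - \mathbf{i}h'(\phi))e^{\mathbf{i}\phi}$, giving exactly the stated formula after dividing by the norm. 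The only subtlety is fixing the sign/orientation so that the normal points \emph{outward}; I would verify this at a convenient point (e.g. where $h'(\phi)=0$, so that $\eta(\phi) = e^{\mathbf{i}\phi}$ points radially outward as expected).

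\medskip

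\noindent\textbf{Part (ii).} This is a direct computation. Using the formula from (i) for $\eta(\phi)$ and the given $v(\phi) = \mathbf{i}h(\phi)e^{\mathbf{i}\phi}$, I would compute the real inner product $\langle\eta,v\rangle$ as the real part of $\eta(\phi)\,\overline{v(\phi)}$ (the standard Hermitian-to-Euclidean dictionary). With $\overline{v(\phi)} = -\mathbf{i}h(\phi)e^{-\mathbf{i}\phi}$, the product $\eta(\phi)\overline{v(\phi)}$ simplifies, the $e^{\mathbf{i}\phi}e^{-\mathbf{i}\phi}$ factors cancel, and taking the real part kills the term proportional to $h^2(\phi)$ while leaving $-h(\phi)h'(\phi)/\sqrt{h^2(\phi)+(h'(\phi))^2}$. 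The monotonicity equivalence is then immediate: since $h>0$ and the denominator is strictly positive, the sign of $\langle\eta,v\rangle(\phi)$ equals the sign of $-h'(\phi)$, so $\langle\eta,v\rangle$ has constant sign on $I$ if and only if $h'$ has constant sign on $I$, which is precisely monotonicity of $h$ on $I$.

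\medskip

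\noindent\textbf{Part (iii).} This is where the main work lies. Symmetry of $K$ with respect to the axis $z_\alpha$ means that $h$ is symmetric about $\phi=\alpha$, i.e. $h(\alpha+\theta) = h(\alpha-\theta)$ for all $\theta$; I would record this and differentiate it to get $h'(\alpha+\theta) = -h'(\alpha-\theta)$. Substituting into the formula from (ii), the denominators at $\alpha+\theta$ and $\alpha-\theta$ are equal (each involves $h^2$ and $(h')^2$, both even under reflection), while the numerators satisfy $-h(\alpha+\theta)h'(\alpha+\theta) = -h(\alpha-\theta)\bigl(-h'(\alpha-\theta)\bigr) = +h(\alpha-\theta)h'(\alpha-\theta)$, which is exactly the negative of the numerator at $\alpha-\theta$. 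This yields $\langle\eta,v\rangle(\alpha+\theta) = -\langle\eta,v\rangle(\alpha-\theta)$, as claimed. The main obstacle, and the step I would be most careful about, is justifying the reflection identity $h(\alpha+\theta)=h(\alpha-\theta)$ rigorously from the geometric symmetry assumption and confirming that differentiating it produces the correct sign relation for $h'$; everything downstream is then mechanical substitution into the formula already established in part (ii).
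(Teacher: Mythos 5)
Your proposal is correct and follows essentially the same route as the paper: parametrize $\partial K$ by $\gamma(\phi)=h(\phi)e^{\textbf{i}\phi}$, rotate and normalize the tangent to get $\eta$, compute the inner product directly, and deduce (iii) from the reflection identities $h(\alpha+\theta)=h(\alpha-\theta)$ and $h'(\alpha+\theta)=-h'(\alpha-\theta)$. The only (welcome) additions are your explicit orientation check for the outward normal and the real-part-of-$\eta\overline{v}$ formulation of the inner product, both of which are cosmetic refinements of the paper's argument.
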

\begin{proof}
\begin{enumerate}[i)]
\item  Let $\gamma: [0, 2 \pi) \rightarrow \mathbb{R}^2$ be defined as $\gamma(\phi)=h(\phi)e^{\textbf{i}\phi}$. That is, $\gamma$ is a parametrization of the boundary curve $\partial K$. Then, the tangent vector field to the boundary $\partial K$ 
is given by 
$\gamma^\prime(\phi)=
\left( h^\prime(\phi)
+ \textbf{i}h(\phi) \right) e^{\textbf{i}\phi}.$
Thus, the outward unit normal to $K$ at a point $\gamma(\phi) \in \partial K$ is given by $$\eta(\phi)=\dfrac{ \left( h(\phi)
-\textbf{i}h^\prime(\phi) \right) e^{\textbf{i}\phi}}{\sqrt{h^2(\phi)+(h^\prime(\phi))^2}}.$$
\item Therefore,
$$
\left<\eta, v\right> (\phi) = \dfrac{h^2(\phi)\left<e^{\textbf{i}\phi}, \textbf{i}e^{\textbf{i}\phi}\right>-h(\phi)h^\prime(\phi)|\textbf{i}e^{\textbf{i}\phi}|^2}{\sqrt{h^2(\phi)+(h^\prime(\phi))^2}}=- \dfrac{h(\phi)h^\prime(\phi)}{\sqrt{h^2(\phi)+(h^\prime(\phi))^2}}.
$$
\item Since $K$ is symmetric with respect to the axis $z_\alpha$, the function $h$ satisfies 
$
h(\alpha+\theta) = h(\alpha-\theta)$ 
 for each 
$\theta \in [0, \pi].
$ Moreover, $h^\prime(\alpha -\theta) = - h^\prime (\alpha +\theta)$ 
 for each 
$\theta \in [0, \pi] .$
Using (ii), we then have
$
\left<\eta , v\right>(\alpha+\theta) = -\left<\eta , v\right>(\alpha-\theta).
$
\end{enumerate}
\end{proof}
\begin{remark}
We note here that since $h 
$ is a $2 \pi$-periodic function on $\mathbb{R}$, so are the functions $v, \eta$ and $\left<v, \eta \right>$. 
\end{remark}

\section{The main theorem} \label{stmnt}
We recall here that $P$ is a compact simply connected subset of $\mathbb{R}^2$ satisfying assumptions \ref{assumption_A}, \ref{assumption_B} and that $B$ is an open disk in $\mathbb{R}^2$ of radius $r_1$ such that $B \supset \overline{co(C_2(P))}$. For $t\in\mathbb{R}$, let $\rho_t \in SO(2)$ denote the rotation in $\mathbb{R}^2$ about the origin $\underline{o}$ in the anticlockwise direction by an angle $t$, i.e., for $\zeta \in \mathbb{C} \cong \mathbb{R}^2  $, we have $\rho_t \zeta :=e^{\textbf{i} t} \zeta$. Now fix $t \in [0, 2\pi)$. 
Let $\Omega_t:= B \setminus \rho_t(P)$ and $\mathcal{F}:= \{\Omega_t \, |\, t \in [0,2 \pi)\}$. 


We now state our main theorem {for $n$ even, $n \geq 3$}:
\begin{theorem}[Extremal configurations]\label{max_min}
The fundamental Dirichlet eigenvalue $\lambda_1(\Omega_t)$ for $\Omega_t \in \mathcal{F}$ is optimal precisely for those $t \in [0,2\pi)$ for which an axis of symmetry of $P_t$ coincides with a diameter of $B$. Among these optimal configurations, the maximizing configurations are the ones corresponding to those $t \in [0,2\pi)$ for which $P_t$ is in an ON position with respect to $B$; and the minimizing configurations are the ones corresponding to those $t \in [0, 2 \pi)$ for which $P_t$ is in an OFF position with respect to $B$.
\end{theorem}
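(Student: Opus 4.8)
The plan is to study the single real-variable function $t \mapsto \lambda_1(t) := \lambda_1(\Omega_t)$, exploiting simplicity of the first eigenvalue and positivity of the first eigenfunction $u_t$. First I would record the symmetries that reduce the problem. Since $\rho_{2\pi/n}(P) = P$, the map $t \mapsto \lambda_1(t)$ is periodic with period $2\pi/n$; and since $B$ is symmetric about the $x_1$-axis while $P$ carries a reflection axis, reflecting $\Omega_t$ across the $x_1$-axis sends it to $\Omega_{-t}$, so $\lambda_1$ is even — this is the content I would record as equation (\ref{even_lambda}). Together these show it suffices to analyze $\lambda_1$ on $[0,\pi/n]$, whose two endpoints are precisely an ON and an OFF configuration (the $x_1$-axis being an outer-vertex, resp. inner-vertex, axis of $P_t$). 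Evenness about every multiple of $\pi/n$, combined with strict monotonicity in between, will then force the extrema to sit exactly at these aligned configurations and to alternate between the two types.

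Next I would differentiate. Only $\partial P_t$ moves, and the velocity of the boundary point $\rho_t(x)$ is the rotation field $v(\zeta)=\mathbf{i}\zeta$ of Section \ref{K}. Because the outward normal of $\Omega_t$ along $\partial P_t$ is $-\eta$ (with $\eta$ the outward normal of $P$), the Hadamard formula (\ref{Hadamard}) yields $\lambda_1'(t) = \int_{\partial P_t} |\nabla u_t|^2\, \langle v,\eta\rangle\, ds$. Lemma \ref{properties_eta}(ii) gives $\langle v,\eta\rangle = -h h' / \sqrt{h^2+(h')^2}$, so by Assumption \ref{assumption_B} this factor has a constant sign on the arc of $\partial P_t$ in each sector, alternating between consecutive sectors. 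To prove Proposition \ref{critical_points} I would argue that when an axis of $P_t$ coincides with the $x_1$-axis, $\Omega_t$ is symmetric about that axis; hence the simple, positive $u_t$ is symmetric and $|\nabla u_t|^2$ is even under that reflection, while $\langle v,\eta\rangle$ is odd by Lemma \ref{properties_eta}(iii). The integrand is therefore odd over a symmetric boundary, so $\lambda_1'(t)=0$ and every aligned configuration is critical.

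The heart of the matter, and Proposition \ref{complete_critical_points}, is to show these are the \emph{only} critical points and that $\lambda_1'$ keeps a fixed sign on each interval between them. Here I would use the sector-reflection technique together with a Serrin-type maximum principle. Fix a non-critical $t$ and an axis of symmetry $z_\alpha$ of $P_t$ separating two consecutive sectors. By Lemma \ref{bound_monot} the distance $d(\phi)$ to $\partial B$ is monotone on each hemisphere, increasing toward the far point $\phi=\pi$, so the reflection $\sigma_\alpha$ carries the smaller cap of $B$ into $B$ while fixing $P_t$ setwise; thus $\sigma_\alpha$ maps the smaller cap of $\Omega_t$ into $\Omega_t$. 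On the larger (reflected) cap $\Sigma$, both $u_t$ and $u_t\circ\sigma_\alpha$ solve $\Delta\,\cdot + \lambda_1\,\cdot = 0$, and $w := u_t - u_t\circ\sigma_\alpha$ is nonnegative on $\partial\Sigma$ (it vanishes on $z_\alpha$ and on $\partial P_t$, and equals $u_t>0$ on the image of $\partial B$). Since $\Sigma$ is a proper subdomain of $\Omega_t$, domain monotonicity gives $\lambda_1(\Sigma) > \lambda_1(\Omega_t)$, so the operator $\Delta + \lambda_1$ obeys the maximum principle on $\Sigma$; hence $w\ge 0$, and Hopf's lemma on $\partial P_t$ forces $|\nabla u_t|$ to be strictly larger on the arc lying in the larger cap. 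Pairing this strict inequality with the constant sign of $\langle v,\eta\rangle$ on each arc signs every paired term, and summing over a \emph{complete} pairing of consecutive sectors — available precisely because $n$ even produces an even number of sectors in each hemisphere — yields $\lambda_1'(t)\ne 0$ of a fixed sign on $(0,\pi/n)$.

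I expect the main obstacle to be exactly the last bookkeeping: verifying that the paired contributions \emph{reinforce} rather than cancel. The rotation field $v$ interacts with opposite orientation in the two hemispheres (the upper boundary rotates toward the larger part of $B$, the lower toward the smaller), so a careless pairing produces contributions of opposite sign. Controlling this is where the monotonicity of $\partial B$ from Lemma \ref{bound_monot} and the even parity of $n$ are indispensable, and it is the delicate analytic content hidden in Proposition \ref{complete_critical_points}. Once the fixed sign of $\lambda_1'$ on $(0,\pi/n)$ is in hand, the same comparison evaluated just off an ON configuration identifies its sign, showing that ON configurations are maxima and OFF configurations are minima; evenness and periodicity propagate this conclusion to all of $[0,2\pi)$ and complete the proof of Theorem \ref{max_min}.
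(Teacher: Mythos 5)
Your proposal is correct and follows essentially the same route as the paper: reduction to $[0,\pi/n]$ via evenness and $2\pi/n$-periodicity (equation (\ref{even_lambda})), criticality of the aligned configurations from the reflection symmetry of $\Omega_{k\pi/n}$ together with the oddness of $\left<\eta, v\right>$ (Proposition \ref{critical_points}), and strict monotonicity of $\lambda_1$ on $(0,\pi/n)$ from the Hadamard formula combined with sector pairing, a Serrin-type reflection, domain monotonicity justifying the maximum principle, and Hopf's lemma (Proposition \ref{complete_critical_points}). The only organizational difference is that the paper's primary comparison function $w$ lives on the union of alternate sectors $H(t)$, with the half-domain (cap) reflection you use appearing there as the auxiliary step (\ref{signw}) fixing the sign of $w$ on the rays $z_{t+k\pi/n}$, $k$ even, and the hemisphere-by-hemisphere sign bookkeeping you flag as delicate is carried out explicitly in (\ref{ref_1})--(\ref{final_integralnew}).
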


{Equation (\ref{even_lambda}), Propositions \ref{critical_points} and \ref{complete_critical_points} imply Theorem \ref{max_min} for $n$ even, $n \geq 3$. For the $n$ odd case, we identify some of the extremal configuration for $\lambda_1$. We prove that equation (\ref{even_lambda}) and Proposition \ref{critical_points} hold true for $n$ odd too. 
We provide numerical evidence for $n=5$ and conjecture that Proposition \ref{complete_critical_points}, and hence, Theorem \ref{max_min} hold true for $n$ odd too.}

\section{Proof of the main theorem}\label{proof}
In this section, we prove our main theorem, viz., Theorem \ref{max_min} {for $n \geq 3$, $n$ even. We prove that equation (\ref{even_lambda}) and Proposition \ref{critical_points} hold true for any $n\geq 3$, even or odd.}

We first justify that, {for any $n \geq 3$, even or odd}, the fundamental Dirichlet eigenvalue $\lambda_1$ of the Laplacian for the family of domains under consideration is a function of just one real variable, and that it is an even periodic function of period $2 \pi /n$. Therefore, in order to determine the extremal configuration/s for $\lambda_1$ we study its behavior on the interval $[0, \frac{\pi}{n}]$. The Hadamard perturbation formula (\ref{Hadamard}) becomes useful in this analysis. We identify some of critical points of $\lambda_1$ in Proposition \ref{critical_points} {for $n\geq 3$, even or odd.}  

We prove Proposition \ref{complete_critical_points} {for $n$ even, $n \geq3$}. In view of equation (\ref{even_lambda}) Propositions \ref{critical_points} and \ref{complete_critical_points} imply that, {for $n$ even, $n \geq3$}, (a) these are the only critical points for $\lambda_1$, and that, (b) between every pair of consecutive critical points, $\lambda_1$ is a strictly monotonic function of the argument. {We introduce and use a `sector reflection technique' which is similar to the domain reflection technique. We also introduce and use a `rotating plane method' which is similar to the moving plane method.}

Let $\lambda_1(t)$ denote the fundamental Dirichlet eigenvalue of the Laplacian on $\Omega_t$ i.e., $\lambda_1(t):= \lambda_1(\Omega_t)$. 
Then, by Proposition 3.1 in \cite{anisa_aithal}, the map $t \longmapsto \lambda_1(t)$ is a $\mathcal{C}^1$ map in $\mathbb{R}$ from a neighborhood of $0$ in $\mathbb{R}$. The same can be said about $\lambda_1(t_0 +t)$ for a fixed $t_0 \in \mathbb{R}$. 
Therefore, to prove Theorem \ref{max_min}, we first need to characterize the critical points of $\lambda_1(t)$. 
\subsection{Sufficient condition for the critical points of $\lambda_1(B \setminus P_t),$ $t \in [0, 2 \pi)$} 
{ Fix $n \geq3$, even or odd. }In this section, we establish a sufficient condition for the critical points of the $\mathcal{C}^1$ function $\lambda_1: \mathbb{R} \rightarrow (0, \infty) $. 

In polar co-ordinates, the open disk $B $ can be represented as the set $\lbrace{re^{i\phi}: \phi\in[0,2\pi), 0\leq r < g(\phi)\rbrace}$, where $g: [0,2 \pi] \rightarrow [0, \infty)$ is a $\mathcal{C}^2$ map with $g(0)= g(2 \pi)$. Here, $(r,\phi)$ is measured with respect to the origin $\underline{o}=(0,0)$ of $\mathbb{R}^2$. The boundary $\partial B$ of $B$, then, is given by $g(\phi)\, e^{i \phi}$, $0 \leq \phi < 2 \pi$. Let $d(\phi)$ denote the Euclidean norm of $g(\phi) \, e^{i \phi}$, that is, $d(\phi)$ is the distance of a point $g(\phi) \, e^{i \phi}$ on $\partial B$ from the center $\underline{o}$ of the obstacle $P$.  
Then, by Lemma \ref{bound_monot}, $d$ is a strictly increasing function of $\phi$ on $[0,\pi]$.
\subsubsection{The initial configuration}\label{a:init_config}
We start with the following initial configuration $\Omega_{\mbox{init}}$ of a domain $\Omega \in \mathcal{F}$. 
Let $P$ and $B$ be as described in section \ref{stmnt}. Let $\Omega_{\mbox{init}}$  denote the domain $B \setminus P \in \mathcal{F}$ where $P$ is in an OFF position with respect to $B$. { Recall that we assumed, without loss of generality, that (a) The centers of $B$ and $P$ are on the $x_1$-axis, (b) the center of $P$ is at the origin, and (c) the center of $B$ is on the negative $x_1$-axis. 
Let $\underline{x}^0:=(-x^0,0)$ be the center of the disk $B$, where $0< x^0<r_1$. }The initial configurations for obstacles with $\mathbb{D}_4$ symmetry are shown in Figure \ref{fig:in_config}. 
\begin{figure}[H]
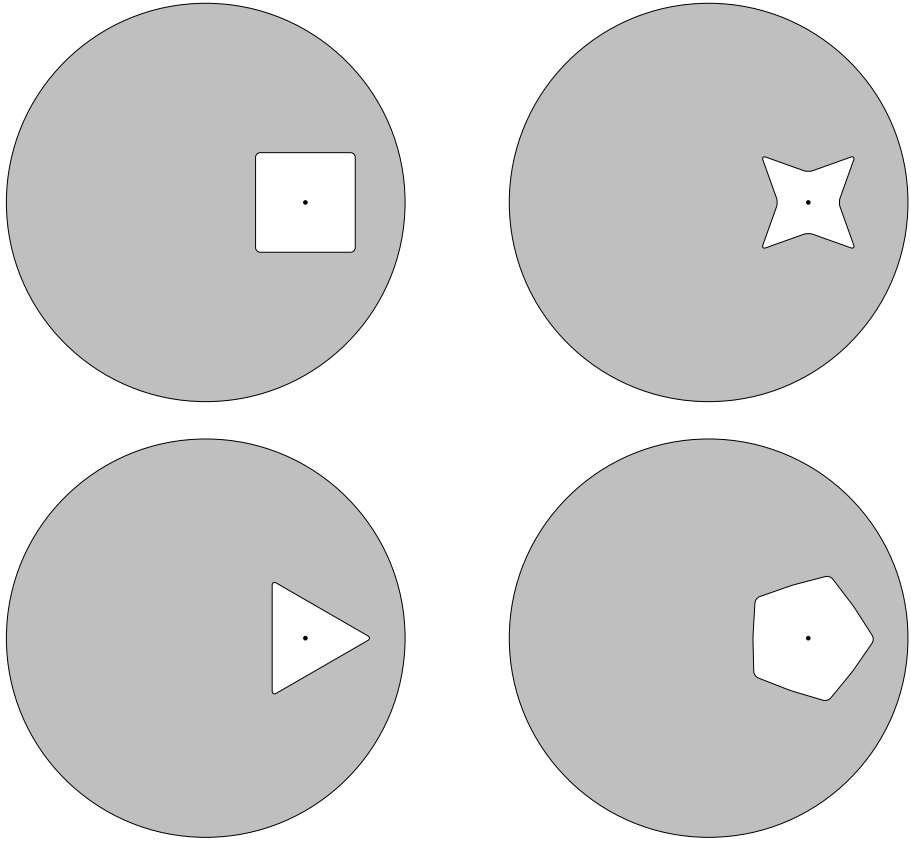
\centering
\subfloat{
\includegraphics[width=0.3\textwidth]{off.pdf}}\hspace{10mm}
\subfloat{
\includegraphics[width=0.3\textwidth]{star_off.pdf}}\\

\subfloat{
\includegraphics[width=0.3\textwidth]{off_triangle.pdf}}\hspace{10mm}
\subfloat{
\includegraphics[width=0.3\textwidth]{off_pentagon.pdf}}

\caption{Initial configuration}
\label{fig:in_config}
\end{figure}

We parametrize $P$ in polar coordinates as follows
\begin{equation}
\begin{aligned}
&P=\lbrace re^{\textbf{i}\phi}\, :\, \phi\in [0,2\pi), 0 \leq r < f(\phi)\rbrace,\\
\end{aligned}
\end{equation}
where $f: [0,2 \pi] \rightarrow [0, \infty)$ is a $\mathcal{C}^2$ map with $f(0)=f(2 \pi)$. Because of the initial configuration assumptions on $B \setminus P$, { $f$ 
is an increasing function of $\phi$ on $({0,\frac{\pi}{n}})$ for $n$ even, and is a decreasing function of $\phi$ on $({0,\frac{\pi}{n}})$ for $n$ odd. }
The condition that the obstacle $P$ can rotate freely around its center $\underline{o}$ inside $B$, i.e. $\rho(P) \subset B~ \forall \rho \in SO(2)$ is guaranteed by assuming that the closure of the convex hull of the circumcircle $C_2(P)$ is contained in $B$. This gives us the following relation:
$$
f\left({\dfrac{\pi}{n}}\right) = \max_{0\leq\phi\leq 2\pi} f(\phi) < \min_{0\leq\phi\leq 2\pi} g(\phi)= g(0).
$$
\subsubsection{Configuration at time $t$} Now fix $t \in [0, 2\pi)$. We set 
\begin{equation}\label{t_configuration}
P_t := \rho_t(P), \qquad \Omega_t := B \setminus P_t.
\end{equation}
Then, in polar co-ordinates, we have
$\partial P_t : =  \{f(\phi-t) e^{i \phi}\, |\, \phi \in [0, 2\pi)\}.$

\begin{figure}[H]\centering
\subfloat{\includegraphics[width=0.4\textwidth]{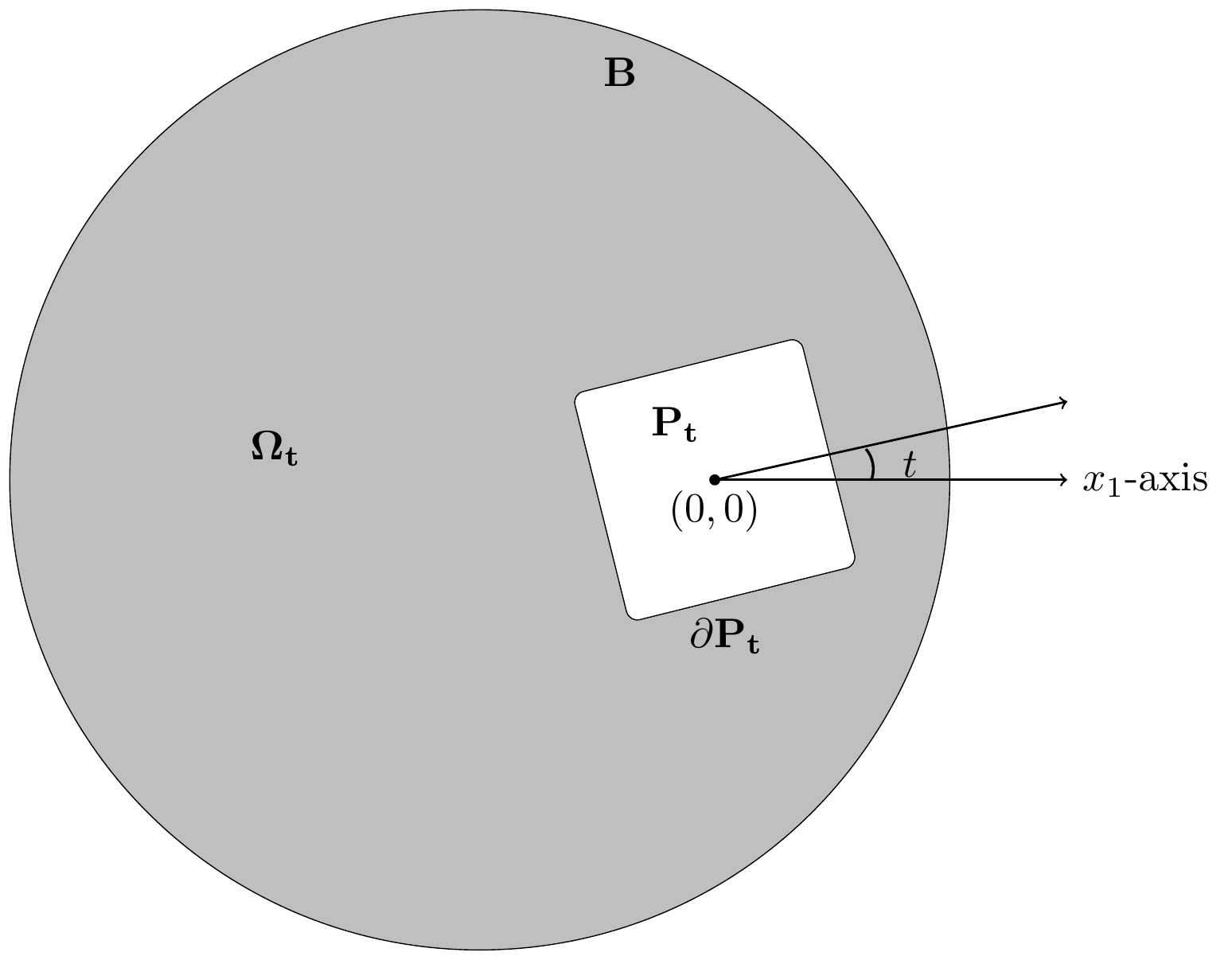}   }\hspace{10mm}
\subfloat{\includegraphics[width=0.4\textwidth]{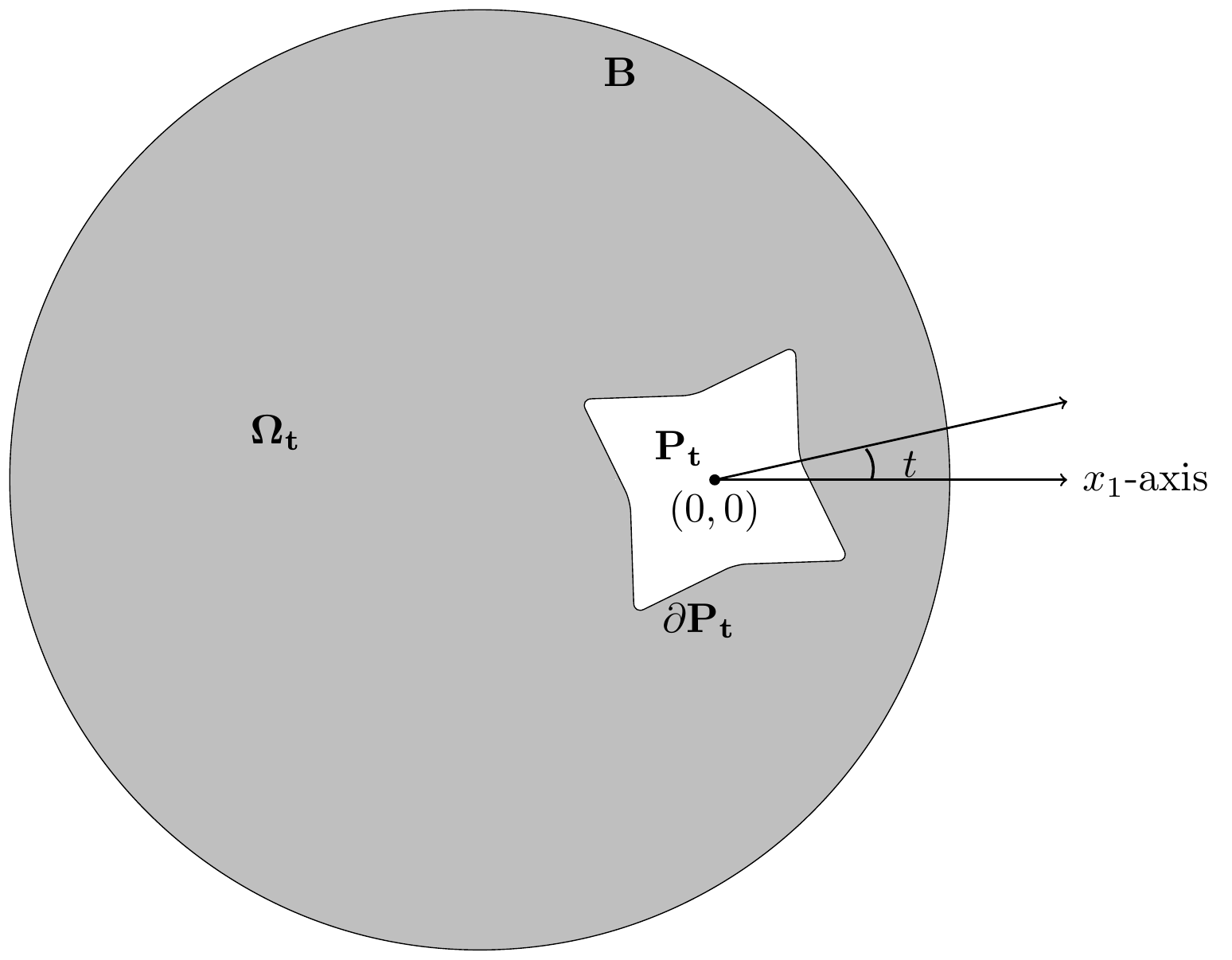}   }\\
\subfloat{\includegraphics[width=0.4\textwidth]{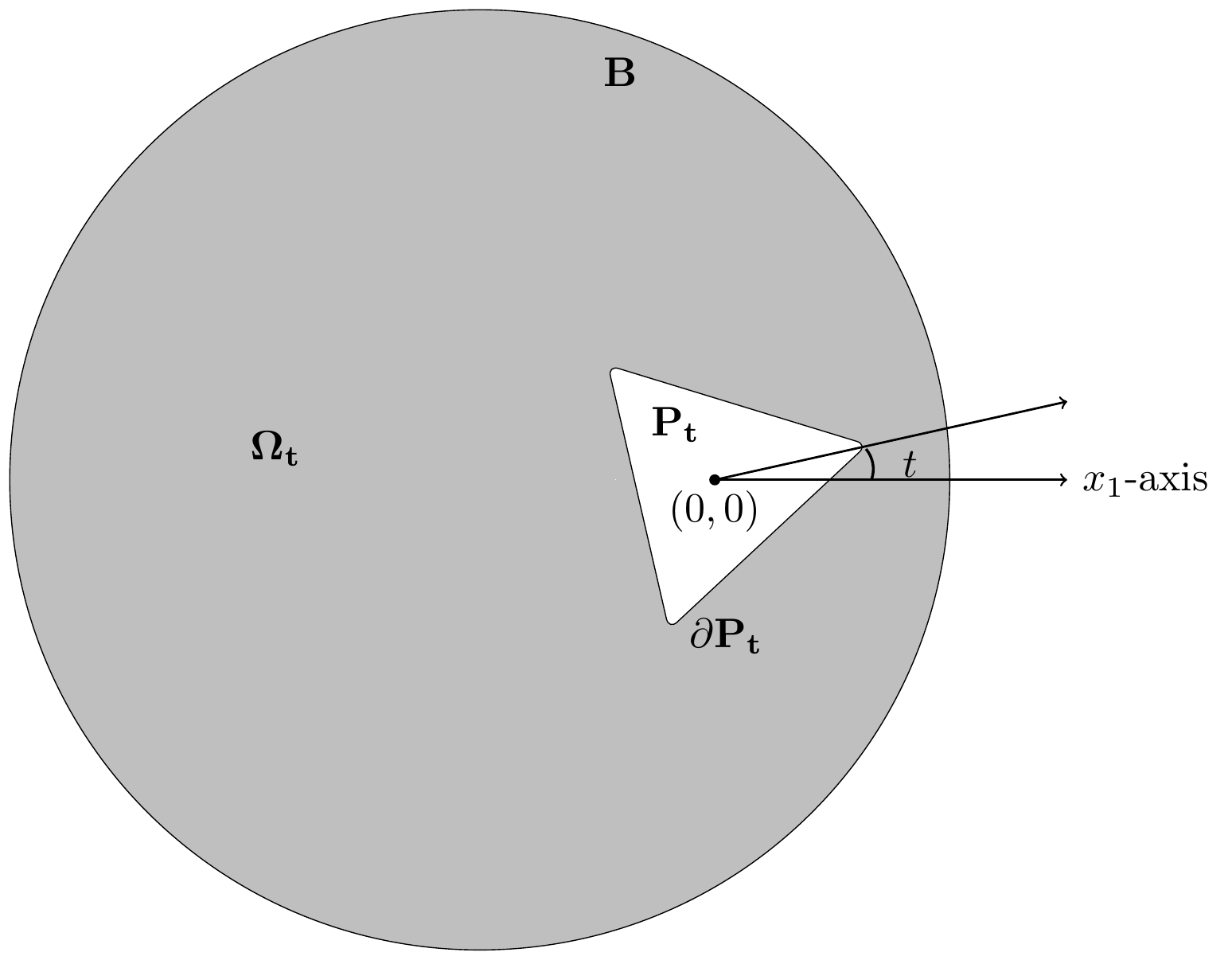}   }\hspace{10mm}
\subfloat{\includegraphics[width=0.4\textwidth]{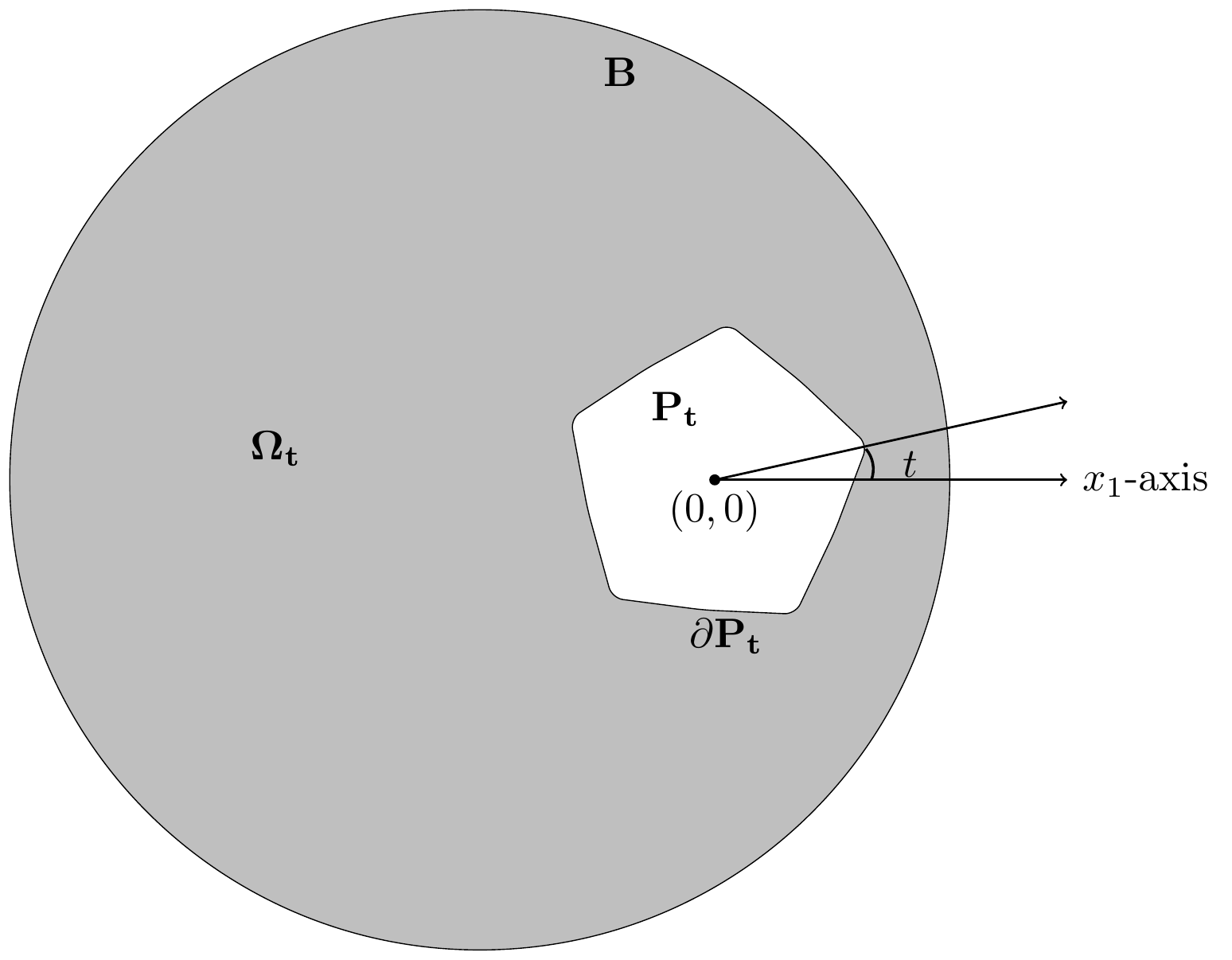}   }
\caption{Configuration at time $t$}\label{fig:t_config}
\end{figure} 

\subsubsection{Hadamard perturbation formula} Let $\lambda_1(t)$ denote the fundamental Dirichlet eigenvalue of the Laplacian on $\Omega_t$ i.e., $\lambda_1(t):= \lambda_1(\Omega_t)$. Let $y_1(t)$ denote the unique positive unit norm principal Dirichlet eigenfunction for the Laplacian on $\Omega_t$, i.e., $y_1(t)$ is the eigenfunction corresponding to $\lambda_1(t)$ on $\Omega_t$ satisfying 
\begin{equation}\label{laplace_equation}
\begin{aligned}
-\Delta u &=\lambda_1(t) \, u & \mbox{ in } & \Omega_t,\\
&u =0 & \mbox{ on } & \partial \Omega_t,\\
\int_{\Omega_t} &u^2(x) \, dx =1, &  & \\
u & > 0 & \mbox{ in } & \Omega_t.
\end{aligned}
\end{equation}
Then, by Proposition 3.1 in \cite{anisa_aithal}, the map $t \longmapsto \lambda_1(t)$ is a $\mathcal{C}^1$ map in $\mathbb{R}$ from a neighborhood of $0$ in $\mathbb{R}$. The same can be said about $\lambda_1(t_0 +t)$ for a fixed $t_0 \in \mathbb{R}$. 
The derivative $\lambda_1^\prime(t)$ of $\lambda_1$ at a point $t \in \mathbb{R}$ is given by the Hadamard perturbation formula, cf. \cite{Had, GS, Sch},
\begin{equation}\label{Hadamard}
\lambda_1^\prime(t) =- \int_{x \in \partial P_t} \left|{\dfrac{\partial y_1(t)(x)}{\partial \eta_t}}\right|^2 ~ \left<\eta_t, v\right>(x)~d\sigma(x)
\end{equation}
where $\eta_t(x)$ is the outward unit normal vector to $\Omega_t$ at $x \in \partial\Omega_t$, and $v \in \mathcal{C}_0^\infty(\Omega_t)$ is the 
 deformation vector field defined as  
\begin{equation}
v(\zeta) = \rho(\zeta) \, \textbf{i}\zeta, \qquad \forall \, \zeta \in  \mathbb{C} \cong \mathbb{R}^2.
\end{equation}
Here, $\rho: \mathbb{R}^2 \rightarrow [0,1]$ is a smooth function with compact support in $B$ such that $\rho \equiv 1$ in a neighborhood of $\overline{co(C_2(P))}$. 
\begin{remark}
We are interested in the outward unit normal to the domain $\Omega_t$ at points on the boundary $\partial P_t := \{f(\phi) e^{i \phi} \, | \, \phi \in [0, 2 \pi)\}$ of the obstacle $P_t$. Therefore, the outward unit normal with respect to the domain $\Omega_t$ at a point $f(\phi) e^{i \phi}$ on $\partial P_t$ will be the negative of the vector field $\eta(f(\phi) e^{i \phi})$, for $h=f$ in Lemma \ref{properties_eta}. 
\end{remark}
\subsubsection{$\lambda_1$ is an even and periodic function with period $\frac{2\pi}{n}$}
{ Recall that $n \geq 3$ is a fixed integer, even or odd}. Since $P_t$ is invariant under the action of the dihedral group $\mathbb{D}_n$, it follows that $\Omega(t+\frac{2\pi}{n}) = \Omega_t$  for each $t \in \mathbb{R}$. Let $R_0:\mathbb{R}^2 \rightarrow \mathbb{R}^2$ denote the reflection in $\mathbb{R}^2$ about the $x_1$-axis. That is, $R_0((x_1, x_2)) := (x_1, -x_2)$ $\forall (x_1, x_2) \in \mathbb{R}^2$. Then, we have $\rho_{2\pi-t} = R_0 \circ \rho_t \circ R_0$ for each $t \in \mathbb{R}^2$. This gives $P_{2\pi-t} = R_0 (P_t)$ and $\Omega_{2\pi-t}=R_0(\Omega_t)$. In $SO(2, \mathbb{R})$, $\rho_{s+t}= \rho_s \circ \rho_t= \rho_t \circ \rho_s \; \forall s,t \in \mathbb{R}$ and $\rho_{2\pi} =$ Id, the identity map. Therefore, we get  $P_{-t} = R_0 (P_t)$ and $\Omega_{-t}=R_0(\Omega_t)$ for all $t \in \mathbb{R}$. Moreover, since $\rho_{\frac{2 \pi}{n}}(P_t)=P_t$ for all $t \in \mathbb{R}$, $\Omega_{\frac{2 \pi}{n}+t} = \Omega_t$ for all $t \in \mathbb{R}$. This implies that $\lambda_1:  \mathbb{R} \rightarrow (0, \infty)$ is an even and periodic function with period $\frac{2\pi}{n}$. Thus we have, 
\begin{equation}\label{even_lambda}
\lambda_1\left({t+\dfrac{2\pi}{n}}\right)=\lambda_1(t),~ \mbox{ and }~ \lambda_1(-t)=\lambda_1(t) ~~~~ \forall \; t ~ \in\mathbb{R}.
\end{equation}
Therefore, it suffices to study the behavior of $\lambda_1(t)$ only on the interval $\left[{0,\frac{\pi}{n}}\right]$.  
\subsubsection{Sufficient condition for the critical points of $\lambda_1$}
The following theorem states a sufficient condition for the critical points of the function $\lambda_1: \mathbb{R} \rightarrow (0, \infty) $.
\begin{proposition}[Sufficient condition for critical points of $\lambda_1$]\label{critical_points}{  Let $n \geq 3$ be a fixed integer, even or odd}. For each $k = 0, 1, 2, \ldots,2 n-1$, $\lambda_1^\prime\left(k\frac{\pi}{n}\right)=0$. 
\end{proposition}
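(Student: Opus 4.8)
The plan is to exploit the Hadamard perturbation formula (\ref{Hadamard}) together with the reflection symmetry of the special configurations $t=k\frac{\pi}{n}$, and to show that the integrand in (\ref{Hadamard}) is odd with respect to the relevant axis, so that the integral vanishes. First I would record the geometric fact that at $t=k\frac{\pi}{n}$ the domain $\Omega_t=B\setminus P_t$ is symmetric about the $x_1$-axis. Indeed, the $n$ axes of symmetry of $P$ are the lines through $\underline{o}$ at angles $m\frac{\pi}{n}$, $m=0,\ldots,n-1$ (mod $\pi$), since in the OFF position the $x_1$-axis itself is an axis of symmetry and consecutive axes are separated by $\frac{\pi}{n}$. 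After rotating $P$ by $t=k\frac{\pi}{n}$, the axis originally at angle $-k\frac{\pi}{n}\pmod{\pi}$ is carried onto the $x_1$-axis; hence the $x_1$-axis is an axis of symmetry of $P_t=\rho_{k\pi/n}(P)$. Because $B$ is a disk centered on the $x_1$-axis, it too is symmetric about this axis, and therefore so is $\Omega_{k\pi/n}$. Equivalently, with $R_0$ the reflection about the $x_1$-axis, $R_0(\Omega_{k\pi/n})=\Omega_{k\pi/n}$.

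Next I would use this reflection symmetry to pin down the symmetry of the first eigenfunction. Since $\lambda_1(\Omega_{k\pi/n})$ is simple and $y_1:=y_1(k\frac{\pi}{n})$ is the unique positive, unit-norm principal eigenfunction, the function $y_1\circ R_0$ is again a positive, unit-norm eigenfunction for the same eigenvalue on the same (reflection-invariant) domain; by uniqueness $y_1\circ R_0=y_1$. Consequently the normal derivative satisfies $\left|\partial y_1/\partial\eta_t\right|(R_0 x)=\left|\partial y_1/\partial\eta_t\right|(x)$ for $x\in\partial P_{k\pi/n}$, i.e. $\left|\partial y_1/\partial\eta_t\right|^2$ is even under the reflection $\phi\mapsto-\phi$ in the polar parametrization of $\partial P_{k\pi/n}$. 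The same holds for the arc-length element $d\sigma=\sqrt{f_t^2+(f_t')^2}\,d\phi$, since the profile $f_t$ of $P_{k\pi/n}$ satisfies $f_t(-\phi)=f_t(\phi)$.

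Finally I would invoke Lemma \ref{properties_eta}. On $\partial P_t$ the cutoff $\rho\equiv 1$, so there $v=\textbf{i}\zeta$ exactly as in that lemma, and since the $x_1$-axis is the symmetry axis $z_0$, part (iii) gives $\langle\eta_t,v\rangle(\theta)=-\langle\eta_t,v\rangle(-\theta)$ for all $\theta$. Thus in (\ref{Hadamard}) the factor $\left|\partial y_1/\partial\eta_t\right|^2\,d\sigma$ is even and the factor $\langle\eta_t,v\rangle$ is odd under $\phi\mapsto-\phi$, so the whole integrand is odd and its integral over the $R_0$-invariant curve $\partial P_{k\pi/n}$ vanishes. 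Hence $\lambda_1^\prime(k\frac{\pi}{n})=0$ for each $k=0,1,\ldots,2n-1$.

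I expect the main obstacle to be bookkeeping rather than analysis: one must verify carefully that the $x_1$-axis is genuinely an axis of symmetry of $P_t$ for every $k$ (handling both the ON and OFF cases, and the identification $z_\alpha=z_{\alpha+\pi}$), and that the outward normal to $\Omega_t$ along $\partial P_t$ is the inward normal of $P_t$, so that Lemma \ref{properties_eta} applies up to an overall sign that does not affect the parity argument. I would also remark that the statement admits a shorter, purely elementary proof: since $\lambda_1$ is $\mathcal{C}^1$, even, and $\frac{2\pi}{n}$-periodic by (\ref{even_lambda}), it is symmetric about each point $k\frac{\pi}{n}$, whence $\lambda_1^\prime(k\frac{\pi}{n})=0$. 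The reflection argument above is the geometric counterpart of this fact, and it is the one that will be reused when proving Proposition \ref{complete_critical_points}.
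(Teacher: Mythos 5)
Your proof is correct and follows essentially the same route as the paper: symmetry of $\Omega_{k\pi/n}$ about the $x_1$-axis, hence $y_1\circ R_0=y_1$ by simplicity, combined with part (iii) of Lemma \ref{properties_eta} to make the Hadamard integrand odd under the reflection so the integral over $\partial P_{k\pi/n}$ vanishes (the paper phrases this as cancellation between $\partial P_{t_k}^+$ and $\partial P_{t_k}^-$ rather than as oddness of the integrand, but it is the same computation). Your closing remark that the result also follows directly from $\lambda_1$ being $\mathcal{C}^1$, even, and $\frac{2\pi}{n}$-periodic via (\ref{even_lambda}) is a valid and genuinely shorter observation, though the reflection argument is the one the paper reuses later.
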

\begin{proof}
Fix $k \in \{0, 1, 2, \ldots,2 n-1\}$. Let $t_k:=k\frac{\pi}{n}$. 
Then, the domain $\Omega_{t_k}$ is symmetric with respect to the $x_1$ axis. The first Dirichlet eigenfunction $y_1\left(t_k \right)$ satisfies
\begin{equation}\label{eigenfunction_symmetry}
u\circ R_0=u,
\end{equation}
where $R_0 \in O(2, \mathbb{R})$ is the reflection about the $x_1$-axis. Clearly, for each $x \in \partial P_{t_k} $ where $\eta$ is defined, $\eta(R_0(x))= DR_0(\eta(x))=R_0(\eta(x))$. Note also that 
\begin{equation}\label{normalderivative}
\dfrac{\partial \left(y_1\left(t_k \right) \circ R_0\right)}{\partial \eta}(x)=\dfrac{\partial \left(y_1\left(t_k\right)\right) }{\partial \eta}\left(R_0(x)\right) \end{equation} 
for each $x$ on $\partial P_{t_k}$ for which the normal derivative makes sense. By the Hadamard perturbation formula (\ref{Hadamard}), we have 
\begin{equation}\label{sum_integral_1newer}
\lambda_1^\prime\left({t_k}\right) =- \int_{\partial P_{t_k}^+} \left|{\dfrac{\partial \left(y_1(t_k )\right)}{\partial \eta_{t_k }}}\right|^2 (x)~ \left<\eta_{t_k}, v \right> (x)~d\sigma(x)
- \int_{\partial P_{t_k }^-} \left|{\dfrac{\partial \left(y_1(t_k )\right)}{\partial \eta_{t_k }}}\right|^2(x) ~ \left<\eta_{t_k },  v\right>(x)~d\sigma(x)
\end{equation}
where $\partial P_{t_k }^+$ and $\partial P_{t_k }^-$ represent the parts of $\partial P_{t_k }$ above the $x_1$-axis and below the $x_1$-axis respectively. 
Therefore we have 
$$
\lambda_1^\prime\left({t_k}\right) = -\int_{\partial P_{t_k }^+} \left|{\dfrac{\partial y_1(t_k)(x)}{\partial \eta_{t_k }}}\right|^2 ~ \left<\eta_{t_k },v \right>(x)~d\sigma(x)
- \int_{R_0\left(\partial P_{t_k }^+\right)} \left|{\dfrac{\partial y_1(t_k )(x)}{\partial \eta_{t_k }}}\right|^2 ~ \left<\eta_{t_k},  v\right>(x)~d\sigma(x).
$$
Using equation (\ref{normalderivative}) and property $(iii)$ of Lemma \ref{properties_eta}, we get 
$\lambda_1^\prime\left({t_k}\right) =0$.
Thus $k\frac{\pi}{n}$, $k \in \{0, 1, 2, \ldots,2 n-1\}$, are the critical points of $\lambda_1$.
\end{proof}
\subsection{The sectors of $\Omega_t$}
{ Fix $n \geq 3$, even or odd.} For a fixed $t \in \mathbb{R}$ and $a, b \in \mathbb{Z}$, $a< b$, let  
$$ \sigma\left( {t+\frac{a\pi}{n},t+\frac{b\pi}{n}}\right):= \left\{ r \, e^{i \phi} \in  \mathbb{R}^2
\, \left| \,  \phi \in \left(t+\frac{a \pi}{n},t+\frac{b \pi}{n}\right), \right. \, r \in \mathbb{R} \right\}.$$
For convenience we will simply write $\sigma_{(a,b)}$ to denote $\sigma\left({t+\frac{a\pi}{n},t+\frac{b\pi}{n}}\right)$. When we write $\sigma_{(k,k+1)}$, $k \in \mathbb{Z}$, we take addition modulo $2n$, that is, $k, k+1 \in \left( \mathbb{Z}_{2n}, + \right)$. 
From equation (\ref{Hadamard}), we have 
\begin{equation}\label{integral}
\lambda_1^\prime(t) = -\sum_{k=0}^{2n-1} \int_{\partial P_t \cap \sigma\left({t+\frac{k\pi}{n},t+\frac{(k+1)\pi}{n}}\right)} \left|{\dfrac{\partial y_1(t)(x)}{\partial \eta_t}}\right|^2 ~ \left< \eta_t, v\right>(x)~d\sigma(x)
\end{equation}
Equation (\ref{integral}) can be written as
\begin{equation}\label{sum_integral}
\begin{aligned}
\lambda_1^\prime(t) = &-\sum_{k=0}^{n-1} \int_{\partial P_t \cap \sigma_{(k,k+1)}} \left|{\dfrac{\partial y_1(t)(x)}{\partial \eta_t}}\right|^2 ~ \left< \eta_t,  v \right>(x)~d\sigma(x)
-\sum_{k=n}^{2n-1} \int_{\partial P_t \cap \sigma_{(k,k+1)}} \left|{\dfrac{\partial y_1(t)(x)}{\partial \eta_t}}\right|^2~ \left< \eta_t, v \right>(x)~d\sigma(x).\\
\end{aligned}
\end{equation}
\begin{figure}[H]\centering
\subfloat{
\includegraphics[width=0.5\textwidth]{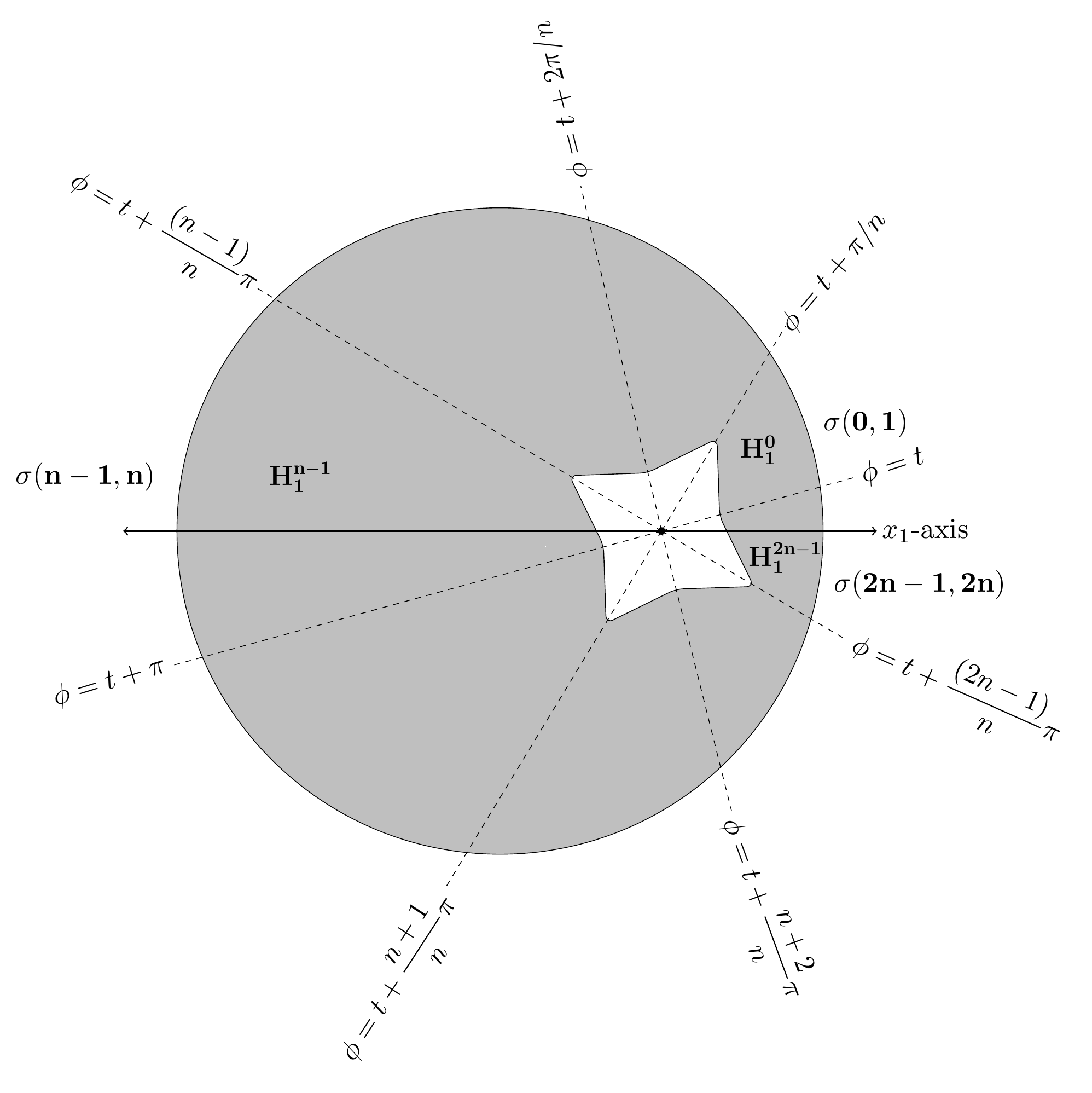}}\caption{Sectors of $\Omega_t$ for $n=4$}
\label{fig:sectors_set}
\end{figure}

\begin{figure}[H]\centering
\subfloat{
\includegraphics[width=0.5\textwidth]{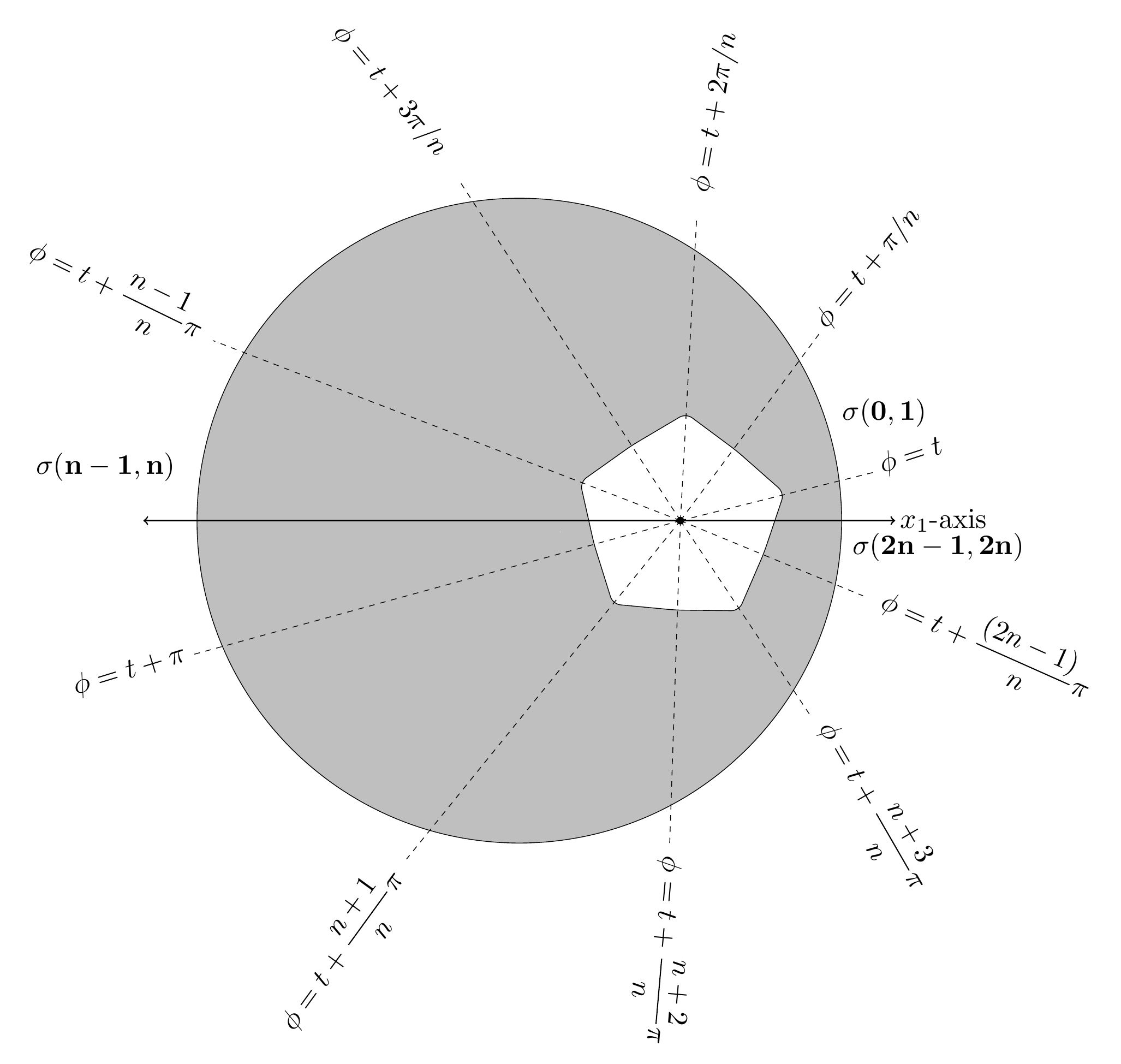}}\caption{Sectors of $\Omega_t$ for $n=5$}
\label{fig:sectors_set_odd}
\end{figure}

We now fix a $t \in (0, \frac{2 \pi}{n})$ and note the following properties for the sectors $\sigma_{(k, k+1)}$
\begin{enumerate}
\item For $k = 0, 1, 2, \dots,  n-2$, each of the sectors $\sigma_{(k,k+1)}$ are completely above the $x_1$-axis.
\item For $k = n,\hdots, 2n-2$, the sectors $\sigma_{(k,k+1)}$ are completely below the $x_1$-axis.
\item The sectors $\sigma_{(n-1,n)}$ and $\sigma_{(2n-1,2n)}$ are partially above the $x_1$-axis and partially below it. 
\end{enumerate}
These facts are illustrated in Figure \ref{fig:sectors_set}.
\subsection{A sector reflection technique}
{ Here onwards, we fix $n\geq 3$, $n$ even.} We recall here from section \ref{K} that, for $\alpha \in [0, 2 \pi]$, $z_\alpha := \{ r e^{i \alpha} \,| \, r \in \mathbb{R}\}$ denotes the line in $\mathbb{R}^2$ corresponding to angle $\phi=\alpha$, represented in polar co-ordinates. Let $R_{\alpha}:\mathbb{R}^2 \rightarrow \mathbb{R}^2$, $\alpha \in \mathbb{R}$, denote the reflection map about the $z_{\alpha}$-axis. For each $t \in \mathbb{R}$, the obstacle $P_t$ is symmetric with respect to the line $z_{t+\frac{(k+1)\pi}{n}}$.  We have, for $k=0,1,2, \ldots,2 n-1$, 
\begin{equation} \label{sectorinclusion}
\displaystyle{R_{t+\frac{(k+1)\pi}{n}}(\partial P_t \cap \sigma_{(k, k+1)}) =\partial P_t \cap \sigma_{( k+1, k+2)}}.
\end{equation}
For $k =0, 1,2, \dots, 2n-1$, let $H_1^k(t):= \Omega_t \cap \sigma_{(k,k+1)}$. Now, let $\tilde{H}_1^k :=\overbar{\Omega}_t \cap \sigma_{(k,k+1)} $, i.e., $\tilde{H}_1^k(t) =H_1^k(t) \cup \left( \overbar{H_1^k(t)} \cap \partial \Omega_t \right)$. 

We consider pairs of consecutive sectors of $\Omega_t$, namely $\sigma_{(k,k+1)}$ and $\sigma_{(k+1,k+2)}$ for each $k=0, 2, 4, \ldots 2n-2$. We now prove the following lemma:
\begin{lemma}\label{containment_1}
{ Fix $n\geq 3$, $n$ even.} For all $t\in (0,\frac{\pi}{n})$, we have the following
\begin{equation}\label{cont1}
R_{t+\frac{(k+1)\pi}{n}}(H_1^k(t)) \subsetneq H_1^{k+1}(t) ~~\mbox{ for } ~  k =0,2,4, \ldots , n-2.
\end{equation} 
\begin{equation}\label{cont2}
R_{t+\frac{(k+1)\pi}{n}}(\tilde{H}_1^k(t)) \subsetneq \tilde{H}_1^{k+1}(t) \setminus \partial B  ~~\mbox{ for } ~  k =0,2,4, \ldots , n-2.
\end{equation} 
\begin{equation}\label{cont3}
R_{t+\frac{(k+1)\pi}{n}}(H_1^{k+1}(t)) \subsetneq H_1^{k}(t) ~~\mbox{ for } ~  k =n, n+2, \ldots , 2n-2.
\end{equation} 
\begin{equation}\label{cont4}
R_{t+\frac{(k+1)\pi}{n}}(\tilde{H}_1^{k+1}(t)) \subsetneq \tilde{H}_1^{k}(t) \setminus \partial B  ~~\mbox{ for } ~  k =n,n+2, \ldots , 2n-2.
\end{equation} 
\end{lemma}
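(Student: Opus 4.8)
The plan is to reduce each of the four containments to a single pointwise statement about the reflection $R_{\alpha}$ with $\alpha := t+\tfrac{(k+1)\pi}{n}$, separating the contribution of the obstacle $P_t$ from that of the disk $B$. Write a generic point as $re^{\mathbf{i}\phi}$ and recall from the polar descriptions that $re^{\mathbf{i}\phi}\in\Omega_t$ iff $f(\phi-t)<r<d(\phi)$, where $d(\phi)$ is the distance of $\partial B$ in direction $\phi$. The axis $z_\alpha$ is an axis of symmetry of $P_t$, so $R_\alpha(P_t)=P_t$, and $R_\alpha$ sends $\sigma_{(k,k+1)}$ onto $\sigma_{(k+1,k+2)}$, cf. (\ref{sectorinclusion}). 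Since $R_\alpha$ preserves the radius $r$ and, by the symmetry of $f$ about the axis of $P_t$, sends the obstacle radius $f(\phi-t)$ to the equal value $f(\phi'-t)$ at the reflected angle $\phi':=2\alpha-\phi$, the inner inequality $r>f$ is preserved exactly; the only inequality that can change is the outer one, $r<d(\phi)$ versus $r<d(\phi')$. Thus everything reduces to comparing $d(\phi)$ with $d(\phi')$.

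First I would prove the key monotonicity fact: for the pairings in (\ref{cont1}) one has $d(\phi')>d(\phi)$ for every $\phi$ in the open source sector. Geometrically this says $R_\alpha$ pushes points strictly toward the direction $\phi=\pi$, where $d$ attains its maximum $r_1+x^0$ (Lemma \ref{bound_monot} together with $d(\phi)=d(2\pi-\phi)$, which makes $d$ strictly decreasing in $|\phi-\pi|$). The cleanest way to see this uniformly — and to dispose of the sectors $\sigma_{(n-1,n)}$ and $\sigma_{(2n-1,2n)}$ that straddle the $x_1$-axis, where $\phi'$ crosses $\pi$ or $0$ — is to argue with the center $c=(-x^0,0)$ directly: since $R_\alpha$ is a reflection in a line through the origin, $\|R_\alpha x-c\|^2-\|x-c\|^2 = 4\,\langle x,n_\alpha\rangle\,\langle c,n_\alpha\rangle$, where $n_\alpha$ is a unit normal to $z_\alpha$. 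For each pairing the center $c$ (direction $\phi=\pi$) lies strictly on the \emph{target} side of $z_\alpha$, while the source point $x$ lies strictly on the opposite side, so the right-hand side is negative and $\|R_\alpha x-c\|<\|x-c\|<r_1$. Hence $R_\alpha x\in B$, strictly more interior than $x$. I would verify the side condition by the elementary check that $\pi$ lies in the open half-circle bounded by $z_\alpha$ containing the target sector: for $k=0,2,\dots,n-2$ this is $\pi\in(\alpha,\alpha+\pi)$ with $\alpha<\pi$, and for $k=n,\dots,2n-2$ it is $\pi\in(\alpha-\pi,\alpha)$ with $\alpha>\pi$.

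Combining the three facts — $R_\alpha x\notin P_t$, $R_\alpha x\in B$, $R_\alpha x\in\sigma_{(k+1,k+2)}$ — gives $R_\alpha(H_1^k(t))\subseteq H_1^{k+1}(t)$, and the strict distance decrease yields the strict inclusion (\ref{cont1}): points of $H_1^{k+1}(t)$ close enough to $\partial B$ cannot be images, since their $R_\alpha$-preimages would lie outside $B$. For the closed version (\ref{cont2}) I would run the same computation on $\tilde{H}_1^k(t)$: points of $\partial P_t$ map exactly onto $\partial P_t$, the radial edge on $z_\alpha$ is fixed and the other radial edge maps to a radial edge of the target sector, while any point of $\tilde{H}_1^k(t)\cap\partial B$ has $\|x-c\|=r_1$ and hence maps with $\|R_\alpha x-c\|<r_1$, i.e. strictly off $\partial B$ — which is precisely why the target set in (\ref{cont2}) is $\tilde{H}_1^{k+1}(t)\setminus\partial B$. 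The lower-half statements (\ref{cont3}) and (\ref{cont4}) are the mirror images, obtained by interchanging source and target (reflecting the sector of larger index into that of smaller index) and using the side condition for $\alpha>\pi$ recorded above; alternatively they follow from (\ref{cont1})–(\ref{cont2}) via $\Omega_{-t}=R_0(\Omega_t)$.

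The main obstacle is the disk part, and within it the two straddling sectors, where the naive ``$d$ increasing on $[0,\pi]$'' bookkeeping breaks down because $\phi'$ leaves $[0,\pi]$. The center-based inequality $\|R_\alpha x-c\|<\|x-c\|$ is exactly what lets me treat all sectors — interior and straddling, upper and lower half — on the same footing, so I would make that the technical heart of the argument and relegate the angle/periodicity juggling to the elementary side-condition check. The obstacle part and the sector-permutation part use only the exact $\mathbb{D}_n$-symmetry of $P_t$ and require no estimates.
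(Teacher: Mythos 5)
Your proof is correct, and its overall skeleton is the same as the paper's: use the $\mathbb{D}_n$-symmetry of $P_t$ about $z_{\alpha}$ (so the obstacle constraint is preserved exactly and the source sector maps onto the target sector), and reduce everything to showing that the reflected point stays strictly inside $B$. Where you genuinely diverge is in how that last, essential step is carried out. The paper proves it by comparing the radial boundary function $g$ at the two reflected angles, invoking the monotonicity of Lemma \ref{bound_monot} on $[0,\pi]$ and $[\pi,2\pi]$; this forces a separate case analysis for the two sectors that straddle the $x_1$-axis ($k=n-2$ and $k=2n-2$), where the reflected angle crosses $\pi$ or $0$ and the symmetry $g(\psi)=g(2\pi-\psi)$ must be used to fold the angle back. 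You instead use the exact identity $\|R_\alpha x-c\|^2-\|x-c\|^2=4\langle x,n_\alpha\rangle\langle c,n_\alpha\rangle$ for a reflection in a line through the origin, together with the elementary observation that the center $c$ always lies strictly on the target side of $z_\alpha$ (because $0<\alpha<\pi$ for the upper pairings and $\pi<\alpha<2\pi$ for the lower ones). This buys a uniform treatment of all sectors — interior and straddling, upper and lower half — with no angle bookkeeping, it makes Lemma \ref{bound_monot} unnecessary for this lemma, and it delivers the strict inequality $\|R_\alpha x-c\|<\|x-c\|$ that cleanly yields both the properness of the inclusions and the fact that $\partial B$-points land off $\partial B$ in (\ref{cont2}) and (\ref{cont4}). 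The paper's route, on the other hand, stays entirely within the polar-coordinate framework ($f$, $g$, $d$) that it reuses later in the rotating-plane argument, so the two proofs trade a short case analysis for a self-contained algebraic identity. One small remark: your closing suggestion that (\ref{cont3})--(\ref{cont4}) "follow from (\ref{cont1})--(\ref{cont2}) via $\Omega_{-t}=R_0(\Omega_t)$" needs care, since $-t\notin(0,\tfrac{\pi}{n})$ and the sector indices must be re-matched; your primary argument (the side condition for $\alpha>\pi$) is the one that actually closes the lower-half case, and it does so correctly.
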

\begin{proof}
We first prove (\ref{cont1}--\ref{cont2}) for $k =0,2,4, \ldots , n-4$, where the pair of sectors $\sigma_{(k,k+1)}$ and $\sigma_{(k+1,k+2)}$ are completely above the $x_1$-axis. A similar technique can be used to prove (\ref{cont3}--\ref{cont4}) for $k =n, n+2, \ldots , 2n-4$, where the sectors $\sigma_{(k,k+1)}$ and $\sigma_{(k+1,k+2)}$ are completely below the $x_1$-axis. We then prove (\ref{cont1}--\ref{cont2}) for $k = n-2$ separately and similarly prove (\ref{cont3}--\ref{cont4}) for $k = 2n-2$ separately.

Let $\beta \in [0, \frac{\pi}{n}]$ be arbitrary. The line $L_1$ containing the center $\underline{o}$ and the point  $$p_1=g\left(t+ (k+1)\frac{\pi}{n}-\beta \right) \left({\cos ({t+ (k+1)\frac{\pi}{n}-\beta}),\sin ({t+ (k+1)\frac{\pi}{n}-\beta})}\right) \in \partial B$$ is reflected about $z_{t+ (k+1)\frac{\pi}{n}}$-axis to the line $L_2$ containing $\underline{o}$ and the point $$p_2=g \left( {t+ (k+1)\frac{\pi}{n}+\beta}\right) \left({\cos ({t+ (k+1)\frac{\pi}{n}+\beta}),\sin ({t+ (k+1)\frac{\pi}{n}+\beta})}\right) \in \partial B,$$
(see Figure \ref{fig:sectors_containment}).

 Since $P_t$ is invariant under this reflection and $B$ is star-shaped with respect to $\underline{o}$, to prove (\ref{cont1}--\ref{cont2}), it suffices to show that $$ g\left({t+\frac{(k+1)\pi}{n}-\beta}\right) < g\left({t+\frac{(k+1)\pi}{n}+\beta}\right) ~~~~ \mbox{ for } ~~  k =0,2,4, \ldots , n-2.$$

\begin{figure}[H]\centering
\subfloat{
\includegraphics[width=0.4\textwidth]{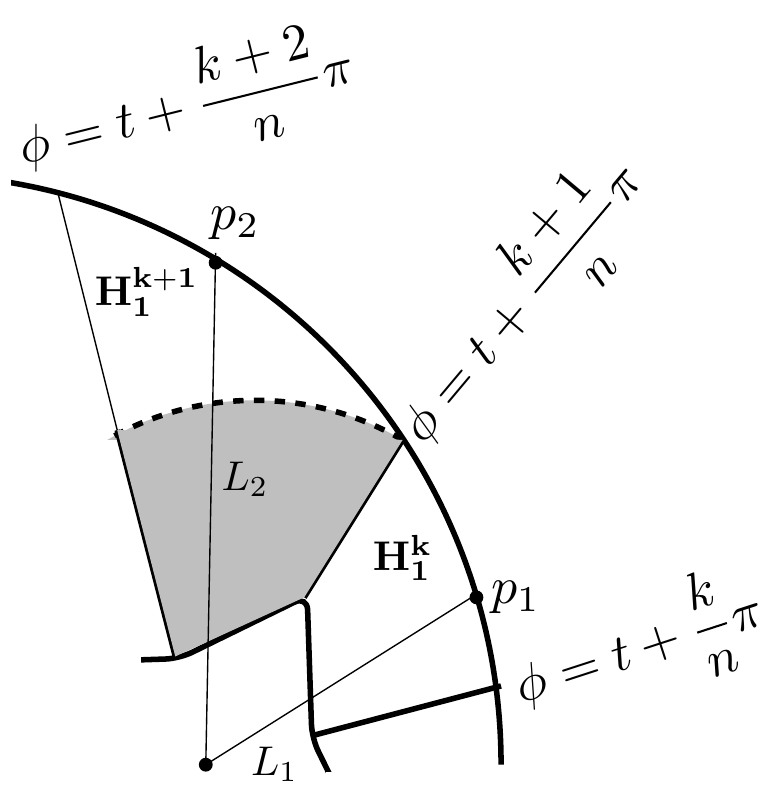}}
\caption{Reflection of sector $H_1^k $ about the axis $z_{t+\frac{(k+1)\pi}{n}}$}
\label{fig:sectors_containment}
\end{figure}

Now, for $k =0,2,4, \ldots , n-4$, $\left({t+\frac{k\pi}{n},t+\frac{(k+2)\pi}{n}}\right) \subset (0, \pi) 
$. So, by Lemma \ref{bound_monot}, $g$ is a strictly increasing function of the argument in $\left({t+\frac{k\pi}{n},t+\frac{(k+2)\pi}{n}}\right)$ for $k =0,2,4, \ldots , n-4$. Therefore, (\ref{cont1}--\ref{cont2}) for $k =0,2,4, \ldots , n-4$ follow from the fact that $t+\frac{(k+1)\pi}{n}-\beta < t+\frac{(k+1)\pi}{n}+\beta$.

Next we consider the case $k=n-2$. The sector $\sigma_{(n-2,n-1)}$ is completely above the $x_1$-axis whereas the sector $\sigma_{(n-1,n)}$ is partially above and partially below the $x_1$-axis. If the point $p_2$ is above the $x_1$-axis we have $0< t+\frac{(n-1)\pi}{n}-\beta < t+\frac{(n-1)\pi}{n}+\beta < \pi$. Since $g$ is strictly increasing in $[0, \pi]$, we have the desired results  (\ref{cont1}--\ref{cont2}) in this case.

Suppose the point $p_2$ is below the $x_1$-axis. Let $\theta>0$ be the angle between $L_2$ and the positive $x_1$-axis. Then, since $\partial B$ is symmetric with respect to the $x_1$-axis, we get
$
g\left({t+\frac{(n-1)\pi}{n}+\beta}\right)=g\left({t+\frac{(n-1)\pi}{n}+(\beta-2 \theta )}\right)
$.
Now, since $\beta > \theta$, we have $ \left({t+\frac{(n-1)\pi}{n}+(\beta-2 \theta )}\right) > \left({t+\frac{(n-1)\pi}{n}-\beta}\right)$. Clearly, $\left({t+\frac{(n-1)\pi}{n}-\beta}\right)$ $\in (0, \pi)$. Moreover, by the choice of $\theta$, $ \left({t+\frac{(n-1)\pi}{n}+(\beta-2 \theta)}\right) \in (0, \pi)$.  Since $g$ is a strictly increasing function of the argument on $[0, \pi]$, we have the desired results  (\ref{cont1}--\ref{cont2}) in this case.

For $k=2n-2$, we first note that we can write $\sigma_{(2n-2,2n-1)}$ as $\sigma_{(-2,-1)}$
and
$\sigma_{(2n-1,2n)}$ as $\sigma_{(-1,0)}$.
We also note that the sector $\sigma_{(-2,-1)}$ is completely below the $x_1$-axis, whereas  the sector $\sigma_{(-1,0)}$ is partially above and partially below the $x_1$-axis.
The line $L_3$ joining the center $\underline{o}$ of $P_t$ to the point 
$$p_3 = g\left({t-\frac{\pi}{n}+\beta}\right)\left({\cos\left({t-\frac{\pi}{n}+\beta}\right), \sin\left({t-\frac{\pi}{n}+\beta}\right)}\right) \in \partial B$$ 
is reflected about $z_{t-\frac{\pi}{n}}$ to the line $L_4$ joining $\underline{o}$ to the point 
$$p_4 = g\left({t-\frac{\pi}{n}-\beta}\right)\left({\cos\left({t-\frac{\pi}{n}-\beta}\right),\sin\left({t-\frac{\pi}{n}-\beta}\right)}\right)  \in \partial B,$$
(see Figure \ref{fig:sectors_containment2}).

Thus to prove (\ref{cont3}, \ref{cont4}), it suffices to show that $$ g\left({t-\frac{\pi}{n}+\beta} \right) < g\left({t-\frac{\pi}{n}-\beta }\right).$$
\begin{figure}[H]\centering
\subfloat{
\includegraphics[width=0.4\textwidth]{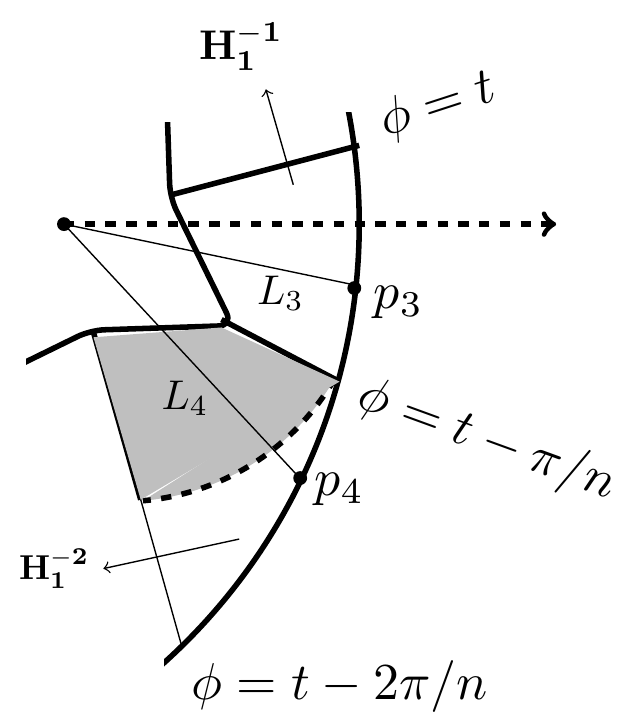}}
\caption{Reflection of sector $H_1^{-1} $ about the axis $z_{t-\frac{\pi}{n}}$}
\label{fig:sectors_containment2}
\end{figure}
 Suppose the point $p_3$ is above the $x_1$-axis. Let $r>0$ be the angle between $L_3$ and the positive $x_1$-axis. Then, $
g\left({t-\frac{\pi}{n}+\beta}\right) = g\left({t-\frac{\pi}{n}+(\beta-2r)}\right)$.
Now, $r< \beta$ implies that $ \left({t-\frac{\pi}{n}+(\beta-2r)}\right) > \left({t-\frac{\pi}{n}-\beta}\right) $. Since $g$ is a strictly decreasing function of the argument in $[\pi, 2 \pi]$ 
we get the desired results (\ref{cont3}, \ref{cont4}) in this case.

If the point $p_3$ is below the $x_1$-axis then
$2 \pi >  \left({t-\frac{\pi}{n}+\beta}\right) > \left({t-\frac{\pi}{n}-\beta}\right) > \pi$, and the fact that $g$ is a strictly decreasing function of the argument in $[\pi, 2 \pi]$ give 
the desired results (\ref{cont3}, \ref{cont4}) in this case. 
\end{proof}
\subsection{The rotating plane method}
\noindent { Recall here that $n \geq 3$ is a fixed even integer.} In order to study the behavior of $\lambda_1$ as a function of $t$, we now analyze the two terms appearing on the right hand side of (\ref{sum_integral}) which is an expression for $\lambda_1^\prime(t)$.

For each $\phi \in [0, \pi]$, by Lemma \ref{properties_eta} we have
\begin{equation}\label{ref_0}\left<\eta_t , v \right> \left({t+\frac{(k+1)\pi}{n}+\phi}\right) = -\left< \eta_t,  v\right> \left({t+\frac{(k+1)\pi}{n}-\phi}\right)~~~\mbox{for}~~ k =0,2,4, \ldots , n-2.
\end{equation}
In particular,  (\ref{ref_0}) holds for each $\phi \in [0, \frac{\pi}{n}]$. 
In other words, if $x^\prime:= R_{t+\frac{(k+1)\pi}{n}}(x)$, then by equation (\ref{sectorinclusion}), for each $k =0,2,4, \ldots , n-2$, 
$x^\prime \in \partial P_t \cap \sigma_{(k+1,k+2)}$ 
for each 
$x \in  \partial P_t \cap \sigma_{(k,k+1)}$, and
$$\left<\eta_t,  v\right> \left( x^\prime \right) = - \left< \eta_t , v \right>\left( x \right) ~~~ \forall \; x \in  \partial P_t \cap \sigma_{(k,k+1)}.$$
Thus, for each $k =0,2,4, \ldots , n-2$, we have the following 
\begin{equation}\label{ref_1}
\begin{aligned}
&\int_{\partial P_t \cap \sigma_{(k,k+1)}} \left|{\dfrac{\partial y_1(t)}{\partial \eta_t}}(x)\right|^2~\left< \eta_t,  v\right>(x)~d\sigma + \int_{\partial P_t \cap \sigma_{(k+1,k+2)}} \left|{\dfrac{\partial y_1(t)}{\partial \eta_t}}(x)\right|^2~ \left<\eta_t , v\right>(x)~d\sigma\\
=&\int_{\partial P_t \cap \sigma_{(k,k+1)}} \left({\left|{\dfrac{\partial y_1(t)}{\partial \eta_t}}(x)\right|^2-\left|{\dfrac{\partial y_1(t)}{\partial \eta_t}}(x^\prime)\right|^2}\right)~ \left<\eta_t,  v \right>(x)~d\sigma.
\end{aligned}
\end{equation}
Now, we know that $f$ is a positive and a strictly increasing function of $ \phi $ in $\left({t+\frac{k\pi}{n},t+\frac{(k+1)\pi}{n}}\right)$  for each $k =0,2,4, \ldots , n-2$. Thus, applying Lemma  \ref{properties_eta} for $\eta_t = -n$ we get
\begin{equation}\label {inrprdctsgn1}\left< \eta_t,  v \right>  >0 ~ \mbox{ on }\partial P_t \cap \sigma_{(k,k+1)}~\mbox{ for each } k =0,2,4, \ldots , n-2.
\end{equation}
Using a similar argument, we have the following: For each $k =n, n+2, \ldots, 2n-2$,
\begin{equation}\label{ref_2}
\begin{aligned}
&\int_{\partial P_t \cap \sigma_{(k,k+1)}} \left|{\dfrac{\partial y_1(t)}{\partial \eta_t}}(x)\right|^2~\left< \eta_t,  v\right>(x)~d\sigma + \int_{\partial P_t \cap \sigma_{(k+1,k+2)}} \left|{\dfrac{\partial y_1(t)}{\partial \eta_t}}(x)\right|^2~ \left<\eta_t , v\right>(x)~d\sigma\\
=&\int_{\partial P_t \cap \sigma_{(k+1,k+2)}} \left({\left|{\dfrac{\partial y_1(t)}{\partial \eta_t}}(x)\right|^2-\left|{\dfrac{\partial y_1(t)}{\partial \eta_t}}(x^\prime)\right|^2}\right)~ \left<\eta_t,  v \right>(x)~d\sigma,
\end{aligned}
\end{equation}
where $x^\prime:= R_{t+\frac{(k+1)\pi}{n}}(x)$. Then, for each $k =n, n+2, \ldots, 2n-2$, $x^\prime \in \partial P_t \cap \sigma_{(k,k+1)}$ 
for each 
$x \in  \partial P_t \cap \sigma_{(k+1,k+2)}$.
We note that the function $f$ is a positive and a strictly increasing function of $ \phi $ in  $\left( t+\frac{(k+2)\pi}{n}, t+\frac{(k+1)\pi}{n} \right)$ for each $k =n, n+2, \ldots, 2n-2$. Thus, applying Lemma  \ref{properties_eta} for $\eta_t = -n$ we get \begin{equation}\label {inrprdctsgn2}\left< \eta_t,  v \right>  >0 ~ \mbox{ on }\partial P_t \cap \sigma_{(k+1,k+2)} ~\mbox{ for each } k =n, n+2, \ldots , 2n-2.\end{equation}
\subsection{Necessary condition for the critical points of $\lambda_1$}
{ Recall here that $n \geq 3$ is a fixed even integer.} We finally show that $\left\{\frac{k \pi}{n}\, |\, k =0,1, \ldots n-1\right\}$ are the only critical points of $\lambda_1$, and that, between every pair of consecutive critical points of $\lambda_1$, it is a strictly monotonic function of the argument. In view of Proposition \ref{critical_points} and equation (\ref{even_lambda}), it now suffices to study the behavior of $\lambda_1$ only on the interval $\left({0,\frac{\pi}{n}}\right)$.   
\begin{proposition}[Necessary condition for critical points]\label{complete_critical_points}{  Fix $n \geq 3$, $n$ even.}
 For each $t\in (0,\frac{\pi}{n})$, $\lambda_1^\prime(t) > 
0$. 
\end{proposition}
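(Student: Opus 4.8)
The plan is to show that \emph{each} symmetrized sector contribution to $\lambda_1'(t)$ is strictly positive. Starting from the sector decomposition (\ref{sum_integral}) and using that $P_t$ is symmetric about every separating axis $z_{t+(k+1)\pi/n}$, the pairing identities (\ref{ref_1}) and (\ref{ref_2}) rewrite $\lambda_1'(t)$ as a sum over consecutive sector pairs of terms of the form
$$-\int_{\partial P_t\cap\sigma_{(k,k+1)}}\Big(\big|\partial_{\eta_t}y_1(t)(x)\big|^2-\big|\partial_{\eta_t}y_1(t)(x')\big|^2\Big)\,\langle\eta_t,v\rangle(x)\,d\sigma,$$
where $x':=R_{t+\frac{(k+1)\pi}{n}}(x)$ and, by (\ref{inrprdctsgn1})--(\ref{inrprdctsgn2}), $\langle\eta_t,v\rangle(x)>0$ on the base sector. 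Thus the whole proposition reduces to the pointwise normal-derivative comparison
$$\big|\partial_{\eta_t}y_1(t)(x)\big|<\big|\partial_{\eta_t}y_1(t)(x')\big|$$
for every $x\in\partial P_t$ lying in the smaller sector, $x'$ being its reflection into the adjacent larger sector (recall from Lemma \ref{containment_1} that the reflected smaller sector is strictly contained in the larger one).

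I would establish this comparison by a rotating plane (moving plane) argument across the axis $\ell:=z_{t+\frac{(k+1)\pi}{n}}$, writing $R:=R_{t+\frac{(k+1)\pi}{n}}$ for the reflection about $\ell$. Let $\Sigma$ be the cap of $\Omega_t$ on the near side of $\ell$, i.e. the side toward the direction $\phi=0$ where $\partial B$ is closest to $\underline{o}$. By the monotonicity of the distance function $d$ in Lemma \ref{bound_monot} (the same computation that underlies Lemma \ref{containment_1}), $R$ maps $\Sigma$ into $\Omega_t$, and carries $\partial B\cap\overline{\Sigma}$ strictly into the interior of $\Omega_t$. I then set $W:=y_1(t)\circ R-y_1(t)$ on $\Sigma$. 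Since the Laplacian commutes with $R$ and $R(\Sigma)\subset\Omega_t$, the function $W$ solves $-\Delta W=\lambda_1(t)\,W$ in $\Sigma$; the boundary values are clean precisely because the \emph{whole} cap, not a single sector, is reflected: $W=0$ on $\ell$, $W=0$ on $\partial P_t\cap\partial\Sigma$ (as $P_t$ and its normal are $R$-invariant and $y_1$ vanishes on $\partial P_t$), and $W>0$ on $\partial B\cap\partial\Sigma$ (these points are sent by $R$ into the interior of $\Omega_t$, where $y_1>0$). Hence $W\ge0$ on all of $\partial\Sigma$.

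The analytic core is a maximum principle for $-\Delta-\lambda_1(t)$ on the cap $\Sigma$. Since $\Sigma$ is a proper open subset of $\Omega_t$, strict domain monotonicity of the first Dirichlet eigenvalue gives $\lambda_1(\Sigma)>\lambda_1(\Omega_t)=\lambda_1(t)$, so the principal eigenvalue of $-\Delta-\lambda_1(t)$ on $\Sigma$ is positive and the generalized maximum principle applies. Together with $W\ge0$ on $\partial\Sigma$ and $(-\Delta-\lambda_1(t))W=0$, this forces $W\ge0$ in $\Sigma$, and the strong maximum principle upgrades this to $W>0$ in the interior, because $W\not\equiv0$. Finally, Hopf's lemma at a point $x\in\partial P_t\cap\partial\Sigma$, where $W$ attains its boundary minimum $0$, gives $\partial_{\eta_t}W(x)<0$; using the $R$-invariance of the normal on $\partial P_t$ this reads $\partial_{\eta_t}y_1(t)(x')<\partial_{\eta_t}y_1(t)(x)<0$, i.e. $|\partial_{\eta_t}y_1(t)(x)|<|\partial_{\eta_t}y_1(t)(x')|$, the required inequality. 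Running this for every upper pair through (\ref{ref_1}), (\ref{inrprdctsgn1}) and every lower pair through (\ref{ref_2}), (\ref{inrprdctsgn2}) (the lower case being the mirror image under reflection in the $x_1$-axis) makes each summand positive, so $\lambda_1'(t)>0$ for all $t\in(0,\frac{\pi}{n})$.

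I expect the main obstacle to be this maximum-principle step: verifying that the reflected cap genuinely lies in $\Omega_t$ so that $W$ satisfies the eigenvalue equation on $\Sigma$ with nonnegative boundary data, and justifying the sign-definite maximum principle for $-\Delta-\lambda_1(t)$ via the strict spectral gap $\lambda_1(\Sigma)>\lambda_1(\Omega_t)$. The containment and the clean boundary signs are exactly the reason the reflection must be carried out on the full near-side cap rather than sector by sector: a single-sector reflection would leave an interior radial edge on the axis $z_{t+\frac{(k+2)\pi}{n}}$ on which the sign of $W$ is a priori indeterminate.
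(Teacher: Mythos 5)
Your proof is correct, and it uses the same circle of ideas as the paper's argument --- the pairing identities (\ref{ref_1})--(\ref{ref_2}), the sign information (\ref{inrprdctsgn1})--(\ref{inrprdctsgn2}), reflection about each separating axis of symmetry, a maximum principle and Hopf's lemma --- but you organize the reflection step differently, and your organization is leaner. The paper defines $w(x)=y_1(t)(x)-y_1(t)(x')$ \emph{piecewise} on the union $H(t)$ of the smaller sectors, which creates internal radial edges on the axes $z_{t+k\pi/n}$, $k$ even, where the sign of $w$ is not immediate; to settle that sign the paper introduces precisely your cap $\Sigma$ (denoted $\mathcal{O}_{k_0}(t)$ there), runs a maximum principle on it, and only then runs a second maximum-principle-plus-Hopf argument on $H(t)$. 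You observe that the cap argument alone already yields the pointwise comparison $\left|\partial_{\eta_t}y_1(t)(x)\right|<\left|\partial_{\eta_t}y_1(t)(x')\right|$ on all of $\partial P_t\cap\sigma_{(k,k+1)}$, by applying Hopf's lemma directly on $\Sigma$, so the union-of-sectors layer becomes unnecessary; this is a genuine streamlining, and your diagnosis of why a single-sector reflection fails (the indeterminate interior radial edge) is exactly the difficulty the paper's two-layer argument is designed to circumvent. You are also more explicit than the paper about why a maximum principle is available for $-\Delta-\lambda_1(t)$ on a proper subdomain: strict domain monotonicity gives $\lambda_1(\Sigma)>\lambda_1(\Omega_t)$, hence a positive principal eigenvalue; equivalently, once $W\ge 0$ is known one has $\Delta W=-\lambda_1(t)\,W\le 0$, so $W$ is superharmonic and the classical strong maximum principle and Hopf lemma apply, which is the route the paper takes implicitly. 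Two minor points to tidy: on $\partial B\cap\partial\Sigma$ the strict inequality $W>0$ holds only off the two points of $\partial B\cap\ell$, where $W=0$; this is harmless since only $W\ge 0$ on $\partial\Sigma$ is needed before invoking the maximum principle. And for the lower-hemisphere pairs the roles of the two sectors in (\ref{ref_2}) are interchanged ($\sigma_{(k+1,k+2)}$ is the one that reflects into $\sigma_{(k,k+1)}$, by (\ref{cont3})), which your mirror-image remark correctly accounts for.
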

\begin{proof} Fix $t\in (0,\frac{\pi}{n})$.
Using (\ref{ref_1}) and (\ref{ref_2}), integral (\ref{sum_integral}) can be written as 

\begin{equation}\label{final_integralnew}
\begin{aligned}
\lambda_1^\prime(t) 
 &= -\sum_{\substack{0\leq k\leq n-2\\k \mbox{ \small{even} }}}
\int_{\partial P_t \cap \sigma_{(k,k+1)}} \left({\left|{\dfrac{\partial y_1(t)(x)}{\partial \eta_t}}\right|^2-\left|{\dfrac{\partial y_1(t)(x^\prime)}{\partial \eta_t}}\right|^2}\right)~ \left<\eta_t, v\right> (x)~d\sigma(x)\\ &- \sum_{\substack{n\leq k\leq 2n-2\\k \mbox{ \small{even} }}} \int_{\partial P_t \cap \sigma_{(k+1,k+2)}} \left({\left|{\dfrac{\partial y_1(t)(x)}{\partial \eta_t}}\right|^2-\left|{\dfrac{\partial y_1(t)(x^\prime)}{\partial \eta_t}}\right|^2}\right)~ \left<\eta_t, v \right> (x)~d\sigma(x) \\
\end{aligned}
\end{equation}
Let $\displaystyle H(t):= \bigcup_{\substack{0\leq k\leq n-2\\k \mbox{ \small{even} }}}
H_1^k(t)$. Let $w(x) := y_1(t)(x) - y_1(t)(x^\prime)$. By Lemma \ref{containment_1}, the real valued function $w$ is well-defined on $H(t)$. Moreover, $w \equiv 0$ on $\partial P_t \cap \partial H(t)$ and also on $\partial H(t) \cap z_{t+ k \frac{\pi}{n}}$ for each $k=1, 3, \ldots n-1$.  That is,
$$
w(x) = 0 ~ \forall \; x\in \partial{H(t)}\bigcap \left({\partial{P_t}\bigcup_{\substack{1\leq k\leq n-1\\k \mbox{ \small{odd} }}}z_{t+\frac{k\pi}{n}}}\right).
$$
Moreover, since $y_1(t)$ vanishes on $\partial{B}$ and is positive inside $\Omega(t)$, and since for each $k=0, 2, \ldots n-2$,  the reflection of $\partial{H_1^k(t)}\cap \partial B$ about the axis $z_{t+( k+1) \frac{\pi}{n}}$ lies completely inside $H_1^{k+1}(t) \subset \Omega(t)$ we have the following 
$$w(x) < 0 ~\forall \; x \in \left( \partial{H(t)}\cap \partial B \right) \setminus \left(\bigcup_{\substack{1\leq k\leq n-1\\k \mbox{ \small{odd} }}} z_{t+\frac{k\pi}{n}}\right).$$ 
{ Now, we claim that 
\begin{equation}\label{signw}
w(x) < 0 ~\forall \; x \in  \partial H(t) \bigcap \bigcup_{\substack{0 \leq k\leq n-2\\k \mbox{ \small{even} }}} z_{t+\frac{k\pi}{n}}.
\end{equation}
We prove this by proving that for each $k$, $0 \leq k \leq n-2, k$ even, $w(x) < 0 ~\forall \; x \in  \partial H_1^k(t) \cap z_{t+\frac{k\pi}{n}}$. For, let's fix a $k_0$ such that $0 \leq k_0 \leq n-2, k_0$ even. 
Now, the axis of symmetry $z_{t+\frac{(k_0+1)\pi}{n}}$ divides $\Omega_t$ in two unequal components. Let us denote the smaller component of the two by $\mathcal{O}_{k_0}(t)$. That is, $\mathcal{O}_{k_0}(t):= \Omega_t \cap \sigma_{(-(k_0+1+n), k_0+1)}$. Now, it can be shown that $R_{t+\frac{(k_0+1)\pi}{n}}\left(\mathcal{O}_{k_0}(t)\right) \subset \Omega_t \cap (\overline{\mathcal{O}_{k_0}(t)})^c $. Therefore, if we define $w_{k_0}(x) := y_1(t)(x) - y_1(t)(x^\prime)$, then the real valued function $w_{k_0}$ is well-defined on $\mathcal{O}_{k_0}(t)$. Here, $x^\prime := R_{t+\frac{(k_0+1)\pi}{n}}(x)$ for $x \in \mathcal{O}_{k_0}(t)$. Moreover, $w \equiv 0$ on $\partial P_t \cap \partial \mathcal{O}_{k_0} (t)$ and also on $\partial \mathcal{O}_{k_0}(t) \cap z_{t+ (k_0 +1)\frac{\pi}{n}}$. 
That is,
$$
w_{k_0}(x) = 0 ~ \forall \; x\in \partial{\mathcal{O}_{k_0}(t)}\bigcap \left({\partial{P_t}\cup z_{t+\frac{(k_0+1)\pi}{n}}}\right).
$$
Moreover, since $y_1(t)$ vanishes on $\partial{B}$ and is positive inside $\Omega_t$, and since the reflection of $\partial{\mathcal{O}_{k_0}(t)}\cap \partial B$ about the $z_{t+( k_0+1) \frac{\pi}{n}}$-axis lies completely inside $ \Omega_t$ we have the following 
$$w_{k_0}(x) < 0 ~\forall \; x \in \left( \partial \mathcal{O}_{k_0}(t )\cap \partial B \right) \setminus  z_{t+\frac{(k_0+1)\pi}{n}}.$$
Therefore, the non-constant function $w_{k_0}$ satisfies 
\begin{equation}\label{w_{k_0}}
\begin{aligned}
-\Delta w_{k_0} &= \lambda_1(t)\,  w_{k_0} & \mbox{ in } & \mathcal{O}_{k_0}(t),\\
w_{k_0} & \leq 0, & \mbox{ on } & \partial \mathcal{O}_{k_0}(t).
\end{aligned}
\end{equation}
Hence, by the maximum principle, $w_{k_0}<0$ in $\mathcal{O}_{k_0}(t)$. In particular, $w_{k_0}<0$ in $\partial H_1^{k_0}(t) \cap z_{t+\frac{k_0\pi}{n}}$. Now, by definition, $w$ and $w_{k_0}$ coincide in $H_1^{k_0}$. Therefore, by continuity of of both $w, w_{k_0}$ we get, $w <0$ in $\partial H_1^{k_0}(t) \cap z_{t+\frac{k_0\pi}{n}}$. But $k_0$ such that $0 \leq k_0 \leq n-2, k_0$ even, was chosen arbitrarily. This proves our claim (\ref{signw})
 }

\begin{figure}[H]\centering
\subfloat{
\includegraphics[width=0.7\textwidth]{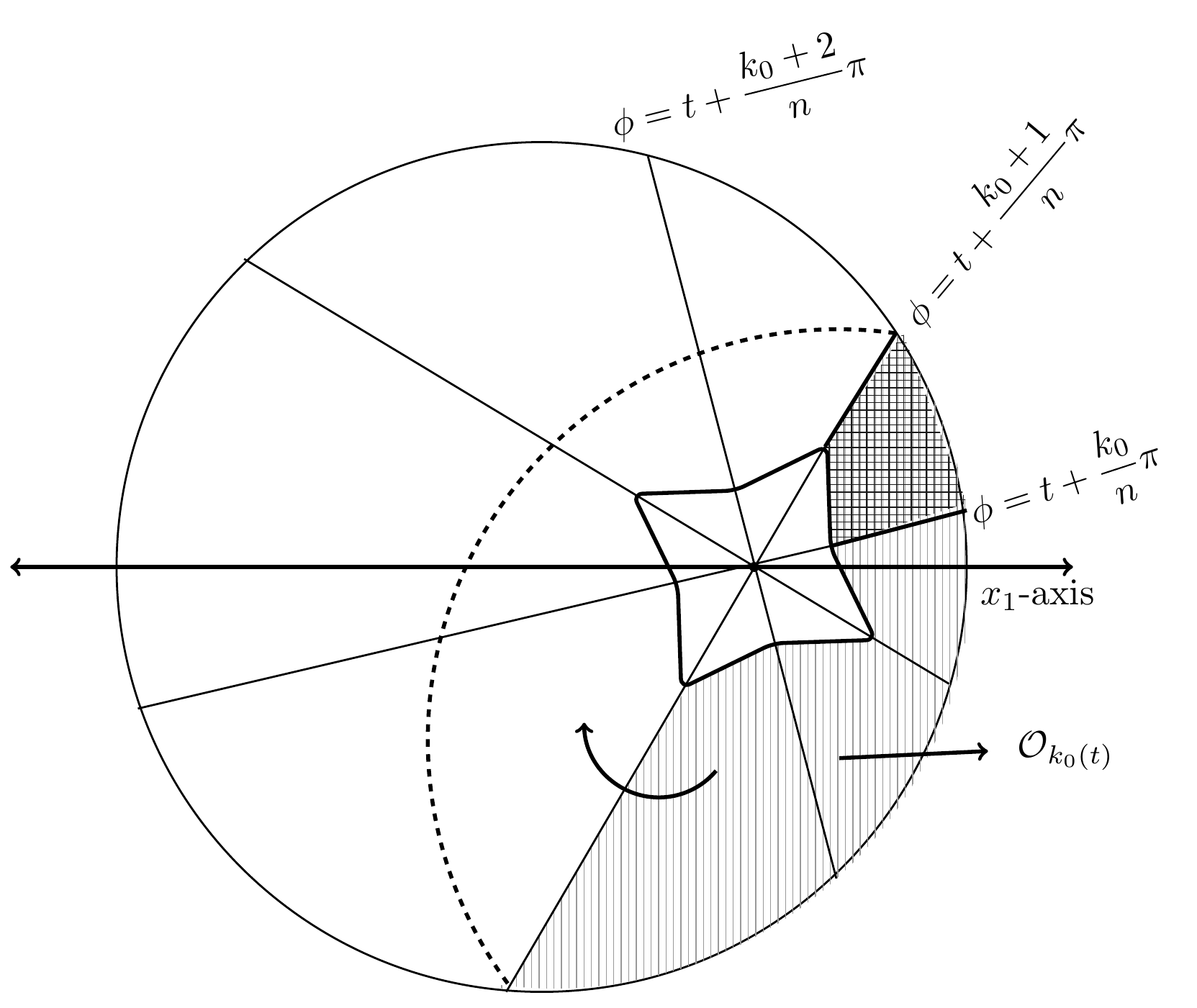}}
\caption{$\mathcal{O}_{k_0}(t)$ for $n=4$}
\label{fig:Ok0_reflection}
\end{figure}

Therefore, the non-constant function $w$ satisfies 
\begin{equation}\label{w}
\begin{aligned}
-\Delta w &= \lambda_1(t)\,  w & \mbox{ in } & H(t),\\
w & \leq 0, & \mbox{ on } & \partial H(t).
\end{aligned}
\end{equation}
Hence, by the maximum principle, $w$ is non-positive on the whole of $H(t)$. Therefore, from (\ref{w}) we have,  $\Delta{w} \geq 0$ in $H(t)$. Since $w$ achieves its maximal value zero on $\bigcup_{\substack{0\leq k \leq n-2\\k\equiv 0\bmod 2}}\left({\partial P_t \cap \sigma_{(k,k+1)}}\right)$ $\subset \partial H(t)$, by the Hopf maximum principle, one has 
$$
\dfrac{\partial w}{\partial \eta_t}(x) 
{ >}0 ~~ \forall\; x\in \bigcup_{\substack{0\leq k \leq n-2\\k\equiv 0\bmod 2}}\left({\partial P_t \cap \sigma_{(k,k+1)}}\right).
$$
That is, $$ \dfrac{\partial y_1(t)}{\partial \eta_t}(x) -\dfrac{\partial y_1(t)}{\partial \eta_t}(x^\prime) { >} 0 ~~ \forall\; x\in \bigcup_{\substack{0\leq k \leq n-2\\k\equiv 0\bmod 2}}\left({\partial P_t \cap \sigma_{(k,k+1)}}\right).$$
Also, by the application of the Hopf maximum principle to problem (\ref{laplace_equation}), it follows that $\dfrac{\partial y_1(t)}{\partial \eta_t}(x) <0 ~\forall\; x \in \partial \Omega_t$. Thus,
\begin{equation}\label {drvsgn1}
\left|{\dfrac{\partial y_1(t)}{\partial \eta_t}(x)}\right|^2 -\left|{\dfrac{\partial y_1(t)}{\partial \eta_t}(x^\prime)}\right|^2 { <} 0 ~ \forall \; x \in \bigcup_{\substack{0\leq k \leq n-2\\k\equiv 0\bmod 2}}\left({\partial P_t \cap \sigma_{(k,k+1)}}\right).\end{equation}
Now, from (\ref{drvsgn1}) and (\ref{inrprdctsgn1}), it follows that the first term in (\ref{final_integralnew}) is strictly positive. Similarly, one can prove using (\ref{inrprdctsgn2}) that the second term in (\ref{final_integralnew}) is also strictly positive. 
This proves the proposition for $n$ even. 
\end{proof}
\subsection{Proof of the main theorem
} Theorem \ref{max_min}, for $n$ even, now follows from Propositions \ref{critical_points}, 
\ref{complete_critical_points}, and equation (\ref{even_lambda}).

\subsection{The $n$ odd case}
{ 
In the proof of Lemma \ref{containment_1}, we considered two consecutive sectors in each of the two hemispheres of the disk $B$ determined by the $z_t$-axis. We then took the reflection of the smaller sector of this pair into the bigger one about the axis of symmetry separating these two sectors. This was possible because the obstacle $P$ we consider had a $\mathbb{D}_n$ symmetry, where $n \geq 3$ was chosen to be even. As a result, the axes of symmetry of $P$ divide $B$ in even number of sectors in each of these hemispheres. 

When $n$ is odd, the axes of symmetry of $P$ divide $B$ in odd number of sectors in each of the hemispheres. Therefore, unlike the $n$ even case, it's not possible to find a complete pairing of consecutive sectors within each of the hemispheres. That is, if in the upper hemisphere we pair the 
consecutive sectors 
$\sigma_{(k,k+1)}$ and $\sigma_{(k+1,k+2)}$, for each $k=0, 2, 4, \ldots n-3$, $k$ even, 
the sector $\sigma_{(n-1,n)}$ of the upper hemisphere remains unpaired. Similarly, if in the lower hemisphere we pair the 
consecutive sectors 
$\sigma_{(k,k+1)}$ and $\sigma_{(k+1,k+2)}$, for each $k= n, n+2, \ldots, 2n-3$, $k$ odd, 
 the sector $\sigma_{(2n-1,2n)}$ of the lower hemisphere remains unpaired. 
A pairing of these two unpaired sectors (shown in figure \ref{fig:sectors_pairing_odd} in solid black) with each other doesn't help either. 
For, 
with respect to this pairing of sectors, equation (\ref{sum_integral}) breaks up into a sum of three terms. Here, the first term corresponds to the pairings of two consecutive sectors of the upper hemisphere, the second term corresponds to similar pairings in the lower hemisphere while the third term corresponds to the pairing of the left over sectors one each from each of the two hemispheres. It can be seen that though the first and the second term of this decomposition are positive, the third term turns out to be negative. This is because the inner product $\left< \eta_t,  v \right> $ corresponding to the third term has a different sign than the ones corresponding to the first two terms. { The reason for this is that $f$ is a strictly decreasing function of $\phi$ on $\sigma_{(k,k+1)}$ for $0\leq k\leq n-3, k$ even, and also for $n+1 \leq k \leq 2n-2, k$ even, but is a strictly increasing function of $\phi$ on $\sigma_{(2n-1,2n)}$.}
As a result, we are unable to arrive at any conclusion about the sign of $\lambda_1^\prime(t) $, $t \in (0, \pi/n)$, for $n$ odd.}
{ Nevertheless, we provide some numerical evidence that enables us to make a conjecture that Theorem \ref{max_min} holds true for $n$ odd too.}
\begin{figure}[H] \centering 
\subfloat{
\includegraphics[width=0.7\textwidth]{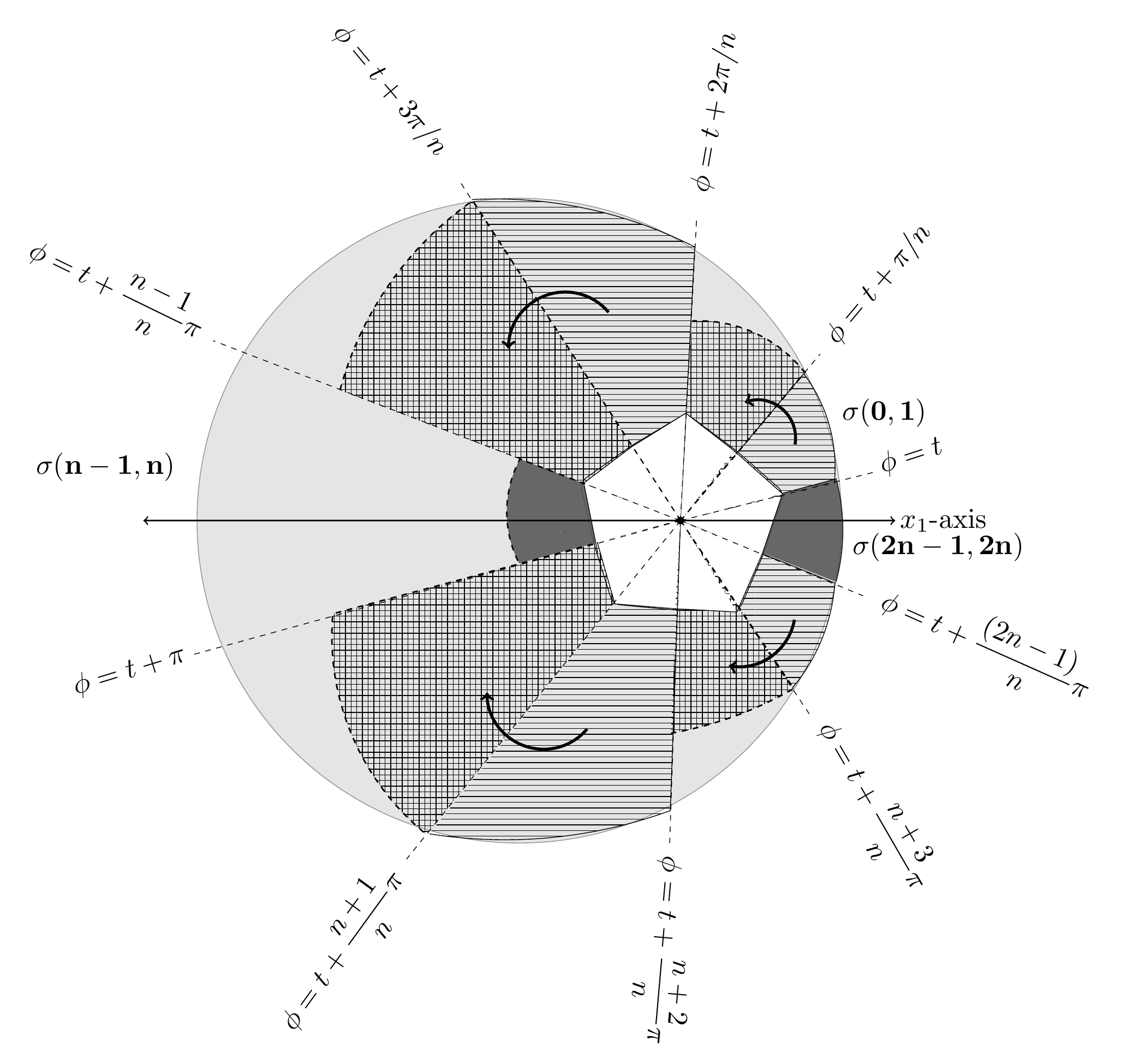}}\caption{Pairing of sectors of $\Omega_t$ for $n=5$}
\label{fig:sectors_pairing_odd}
\end{figure}

\section{Generalizations of Theorem \ref{max_min}}\label{sec:remarks}
Similar to the claims of \cite{elsoufi_kiwan}, extensions of Theorem \ref{max_min} to the following situations can be obtained up to slight
changes in the proof (indeed, only the Hadamard perturbation formula should be replaced by the
variation formula corresponding to the new functional):
\begin{enumerate}
\item Soft obstacles: Instead of considering the Dirichlet Laplacian on $B\setminus P$, we consider the Schr\"{o}dinger-type operator
$$
H(\alpha, P) := \Delta - \alpha \chi_P,
$$
acting on $H_0^1(B),$ where $\alpha >0$ and $\chi_P$ is the indicator function of $P$. For a compact simply connected subset $P$ of $\mathbb{R}^2$ satisfying assumptions \ref{assumption_A} and \ref{assumption_B}, the fundamental eigenvalue of $H(\alpha,P)$ achieves its maximum at an ``ON'' position and minimum at an ``OFF" position. A proof, similar to the one for Theorem \ref{max_min}, works for this case with the Hadamard variation formula replaced by the variational formula corresponding to the new functional.
\item Wells: This case corresponds to the operator $H(\alpha, P)$ with $\alpha <0$. In this case, the fundamental eigenvalue of $H(\alpha,P)$ achieves its maximum at an ``OFF" position and minimum at an ``ON" position.
\item Stationary problem: The problem now is to optimize the Dirichlet energy $E(\Omega):=\int_{\Omega} \|\nabla u\|^2 \, dx$ of the unique solution $u$ of the problem
\begin{equation}\label{stationary}
\begin{aligned}
-\Delta u &= 1 & \mbox{ in } & \Omega,\\
&u =0 & \mbox{ on } & \partial \Omega,\\
\end{aligned}
\end{equation}
This problem was treated in Kesavan \cite{kesavan}  in the case $\Omega = B \setminus P$ where both $P$ and $B$
are disks. Under the assumptions of Theorem \ref{max_min} on $P$ and $B$, one can prove
that $E(B \setminus P)$ achieves its maximum when $P$ is at an “ON” position and its
minimum when $P$ is at an “OFF” position with respect to $B$.
\end{enumerate}
In addition to the list above, we also have the following generalizations. Due to space constraints, we refer to some useful articles for ideas and approach of the proof of these generalizations.
\begin{enumerate}
\item  Planar domains with non-smooth boundary: We know that for any bounded domain $\Omega$ having $\mathcal{C}^2$ 
boundary, the solution $u$ of (\ref{laplace_equation}) lies in $C^\infty(\bar{\Omega}) \subset H^2( \Omega)$. 
Let us now consider a closed convex regular polygon $P$ in $\mathbb{R}^2$ enclosing area $A$. That is, $P$ satisfies only conditions (b), (c) and (d) of assumptions \ref{assumption_A} and \ref{assumption_B} and the boundary $\partial P$ of $P$ is a simple closed piecewise linear curve. Let $B$ be an open disc in $\mathbb{R}^2$ such that $B \supset \overline{co (C_2(P))}$.
 Then, the solution of (\ref{laplace_equation}) for $\Omega=B \setminus P$ in this case, is non-smooth and belongs to $H_0^{1+\delta}(\Omega)$, where $\delta\in\left({\dfrac{1}{2},\dfrac{3}{5}}\right)$ \cite{grisvard}. To avoid technical difficulties, in this paper we have worked with domains having $\mathcal{C}^2$ boundaries. Extension of our result to domains with non-smooth boundaries can be done using an approach similar to the one in \cite{aithal_acushla}.
\item  Two-dimensional space forms:
Consider the unit sphere $S^n :=\{x_1, x_2, \ldots, x_{n+1}) \in \mathbb{R}^{n+1} \,|\, \sum_{i=1}^{n+1} x_i^2 =1\}$ with induced Riemannian metric $\left<,\right>$ from the Euclidean space $\mathbb{R}^{n+1}$. Also consider the hyperbolic space $\mathbb{H}^n:= \{ (x_1, x_2, \ldots , x_{n+1}) \in \mathbb{R}^{n+1} \, |\,
\sum_{i=1}^{n}x_i^2
 −x_{n+1}^2= −1 \mbox{ and } x_{n+1} > 0$ with the Riemannian metric induced from the quadratic form $(x,y) := \sum_{i=1}^n x_i \, y_i  - x_{n+1}y_{n+1}$, where $x= (x_1,x_2, \ldots , x_{n+1})$ and $y = (y_1, y_2, \ldots, y_{n+1})$. The Riemannian manifolds $\mathbb{E}^n$, $S^n$ and $\mathbb{H}^n$ are all the space forms , i.e., complete simply connected Riemannian manifolds of constant sectional curvature. For the generalization of Theorem \ref{max_min} to the space forms, we consider space forms of dimension 2. They are denoted by $M_\kappa^2$ in \cite{anisa-aithal1} and \cite{aithal_raut} where $\kappa $ denotes the sectional curvature of the Riemannian manifold $(M,g)$ under consideration. Here, $\kappa =-1, 0$ and $1$ for $\mathbb{H}^2$, $\mathbb{E}^2$ and $S^2$, respectively. 
 Let $B$ be any geodesic ball of radius
$r_1$ in $S^n$, $\mathbb{H}^n$. We choose $r_1< \pi$ for the case of $S^n$.  Let $\kappa \in \{-1, 0 ,1\}$. 
\begin{itemize} 
\item obstacle with non-smooth boundary: Let $P$ be a regular polygon of $n$ sides in $M_\kappa^2$ such that $P \subset B$; and $P$, $B$ having distinct centers. For a description of such polygons on $M_\kappa^2$ please refer to \cite{aithal_raut}. Then, Theorem \ref{max_min} of this paper holds for the family of domains $\Omega =B \setminus P$ over $M_\kappa^2$ too. \cite{aithal_raut} will be useful in proving this generalization. 
\item obstacle with smooth boundary: Anisa and Aithal \cite{anisa_aithal} developed a shape calculus on general Riemannian manifolds of dimension $n$, and used it to prove the analogues of the results of Hersch \cite{hersch}, Kesavan \cite{kesavan} and Ramm-Shivakumar \cite{Ramm-Shivakumar} on space-forms. The reflection method worked there just as Euclidean space, because reflection in a hyperplane is an isometry in any space form, and so it commutes with the Laplace-Beltrami operator. One can come up with a description of compact simply connected subset of $M_\kappa^2$ satisfying assumptions \ref{assumption_A}, \ref{assumption_B} of this paper such that $B \supset \overline{co(C_2(P))}$. It can be taken as a small project to generalize the main theorem of \cite{elsoufi_kiwan} and to generalize our main theorem, viz., Theorem \ref{max_min}, for the corresponding family of domains in $M_\kappa^2$.
\end{itemize}
\end{enumerate}
\section{Numerical results} \label{sec:num_results}

We give some numerical evidence supporting Theorem \ref{max_min}. We take $n=4,5$. That is, we take $P$ to be a compact simply connected subset of $\mathbb{R}^2$ satisfying assumptions \ref{assumption_A}, \ref{assumption_B} for $n=4,5$. Recall that the function $f(\phi)$, the distance of a point $f(\phi)e^{i \phi} \in \partial P$ from the center of $P$, is a decreasing function of $\phi$ for $\phi \in \left({0,\dfrac{\pi}{n}}\right)$. We solve the boundary value problem (\ref{laplace_equation}) in the domain $\Omega= B \setminus P$ using finite element method with $P^1$ elements (see e.g., \cite{SR1,SR2}) on a mesh with element size $h=0.018$. The mesh is shown in Figure \ref{fig:mesh}. 
\begin{figure}[H]
\begin{center}
\includegraphics[width=0.3\textwidth]{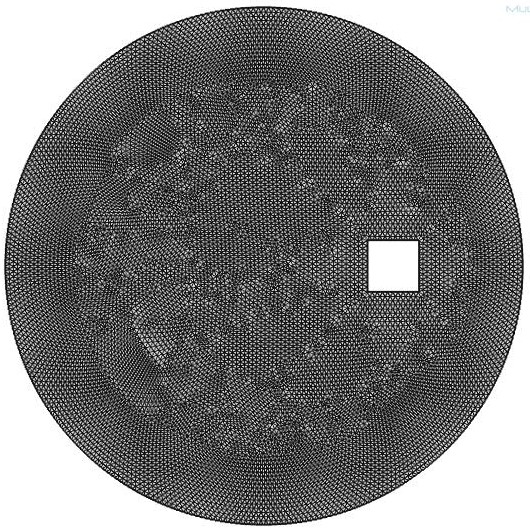}   \hspace{10mm }
\includegraphics[width=0.4\textwidth]{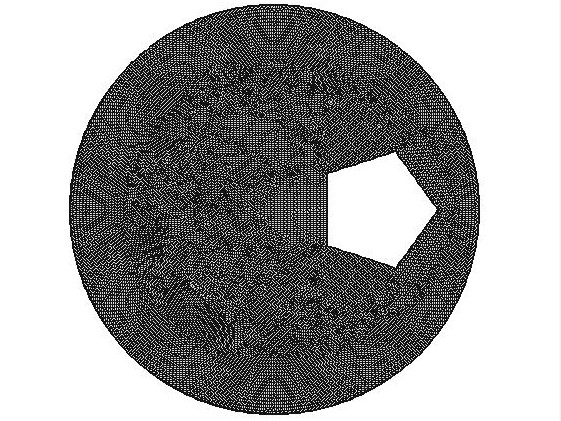}    
\end{center}
\caption{The mesh }\label{fig:mesh}
\end{figure}

\begin{figure}
  \centering
    \subfloat[Initial OFF position]{%
     \label{fig:min} \includegraphics[scale=0.3,keepaspectratio]{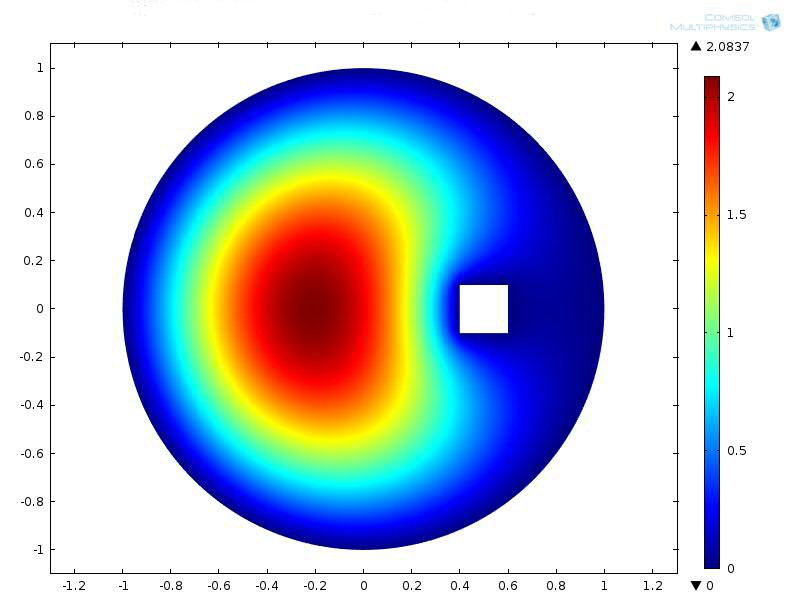}
   }
    \subfloat[Intermediate Position]{%
      \label{fig:middle}\includegraphics[scale=0.3,keepaspectratio]{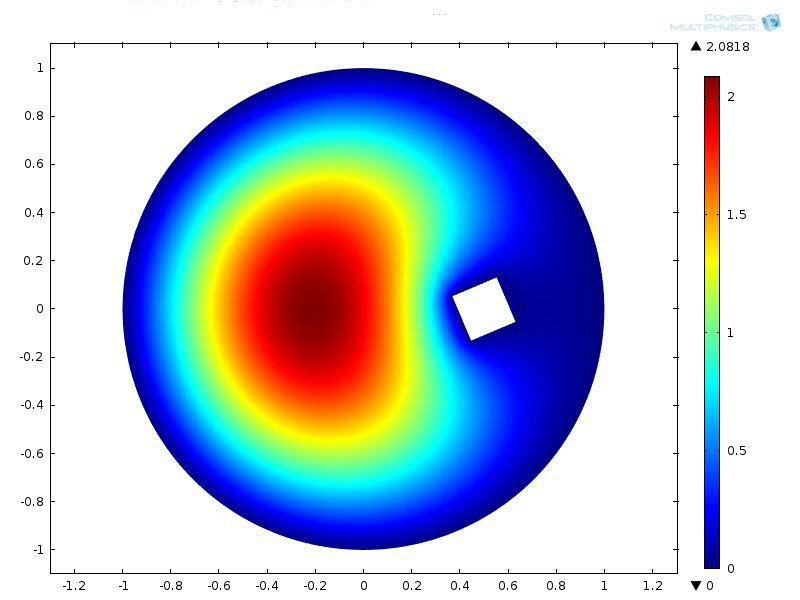}
      }\\
        \subfloat[ON position]{%
      \label{fig:max}\includegraphics[scale=0.3,keepaspectratio]{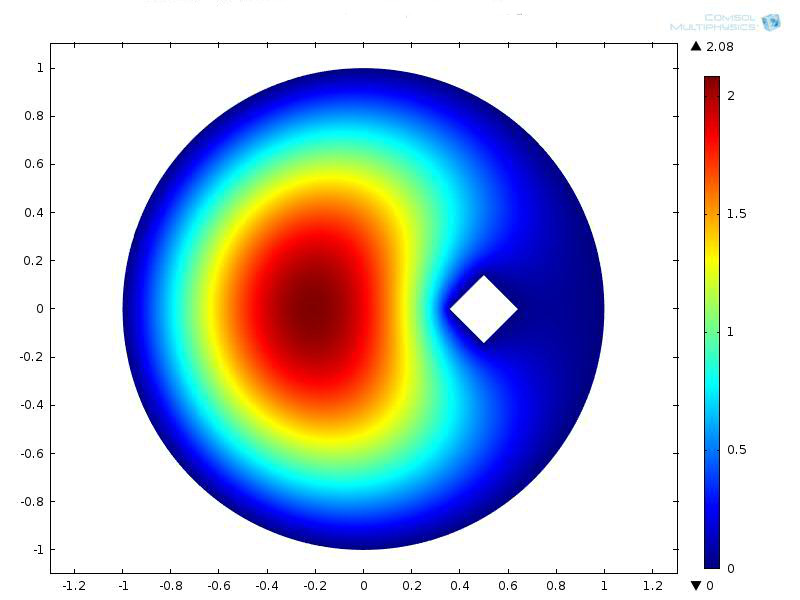}
      }\\
      \subfloat[Another intermediate position]{%
      \label{fig:middle_2}\includegraphics[scale=0.3,keepaspectratio]{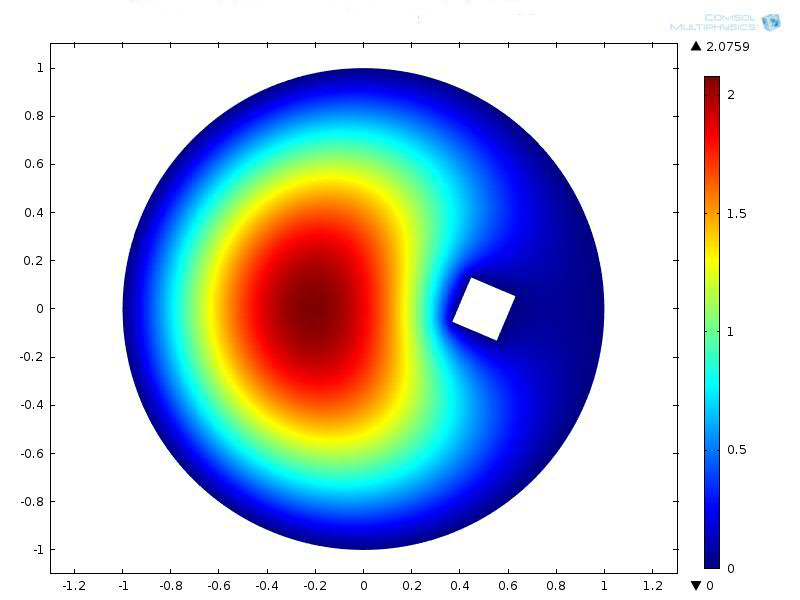}
      }
      \subfloat[OFF position]{%
      \label{fig:min_2}\includegraphics[scale=0.3,keepaspectratio]{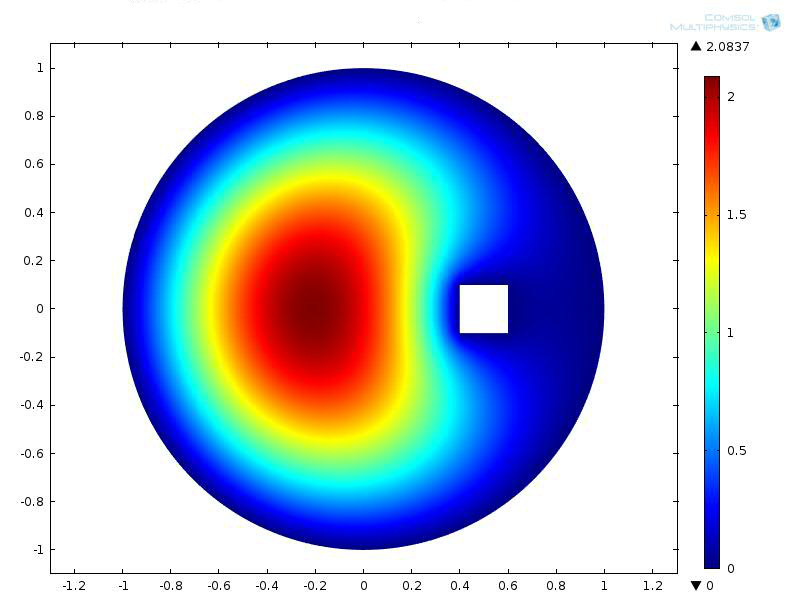}
      }
      \caption{Simulations of ON, OFF and intermediate positions of the square. 
}
      \label{simulation_results}
      \end{figure}
 
We validate Theorem \ref{max_min} for the square obstacle with $n=4$. The initial configuration, given in Figure \ref{fig:min}, is an OFF configuration which is a minimizing configuration according to Proposition \ref{critical_points}, Proposition \ref{complete_critical_points}, and equation (\ref{even_lambda}). This is justified by the numerical value of $\lambda_1=7.5735$ given in Table \ref{table1}. We then rotate $P$ by an angle $\frac{\pi}{8}$ about its center in the anticlockwise direction. This gives an intermediate configuration of the domain $\Omega = B \setminus P \in \mathcal{F}$, cf. Figure \ref{fig:middle} with an increased value of $\lambda_1$. It increases further on rotating by the same angle $\frac{\pi}{8}$ further in the anticlockwise direction. This rotation makes $\Omega$ attain an ON position with respect to $B$, see Figure \ref{fig:max}. One more rotation of $P$ about its center by an angle $\frac{\pi}{8}$ leads to another intermediate configuration, see Figure \ref{fig:middle_2}. This rotation now results in a decrease in the value of $\lambda_1$. A final rotation of $ P$ again about its center by the same angle of $\frac{\pi}{8}$ brings $P$ back to an OFF configuration with respect to the disk $B$, see Figure \ref{fig:min_2}. We note that this time $\lambda_1$ attains its minimum value again. We refer to Table \ref{table1} for the numerical observations.

\begin{table}[H]
\begin{center}
\begin{tabular}{|c| c| c|}
\hline
$\theta$ & $\lambda_1$ & Configuration  \\ [0.5ex]
\hline
0 &7.5735& OFF\\
$\pi/8$ &7.5739&--\\
$\pi/4$ &7.5742&ON\\
$3\pi/8$ &7.5739&--\\
$\pi/2$ &7.5735&OFF\\[1ex]
\hline
\end{tabular}
\end{center}
\caption{Variation of $\lambda_1$ with rotations of the square $P$ about its center by an angle $\theta$ measured with respect to the positive $x_1$-axis.}
\label{table1}
\end{table}

{We next show that Theorem \ref{max_min} is true for odd $n$ too by demonstrating quantitative and qualitative results for an obstacle having pentagonal shape. The initial configuration, given in Figure \ref{fig:pentagon_min}, is an OFF configuration which turns out to be a minimizing configuration. This is justified by the numerical value of $\lambda_1=9.089$ given in Table \ref{table2}. We then rotate $P$ by an angle $\frac{\pi}{10}$ about its center in the anticlockwise direction. This gives an intermediate configuration of the domain $\Omega = B \setminus P \in \mathcal{F}$, cf. Figure \ref{fig:pentagon_middle} with an increased value of $\lambda_1$. It increases further on rotation by the same angle $\frac{\pi}{10}$ further in the anticlockwise direction. This rotation makes $\Omega$ attain an ON position with respect to $B$, see Figure \ref{fig:pentagon_max}. One more rotation of $P$ about its center by an angle $\frac{\pi}{10}$ leads to another intermediate configuration, see Figure \ref{fig:pentagon_middle_2}. This rotation now results in a decrease in the value of $\lambda_1$. A final rotation of $ P$ again about its center by the same angle of $\frac{\pi}{10}$ brings $P$ back to an OFF configuration with respect to the disk $B$, see Figure \ref{fig:pentagon_min_2}. We note that this time $\lambda_1$ attains its minimum value again. We refer to Table \ref{table2} for the numerical observations.}

      \begin{figure}[H]
  \centering
    \subfloat[Initial OFF position]{%
     \label{fig:pentagon_min} \includegraphics[scale=0.4,keepaspectratio]{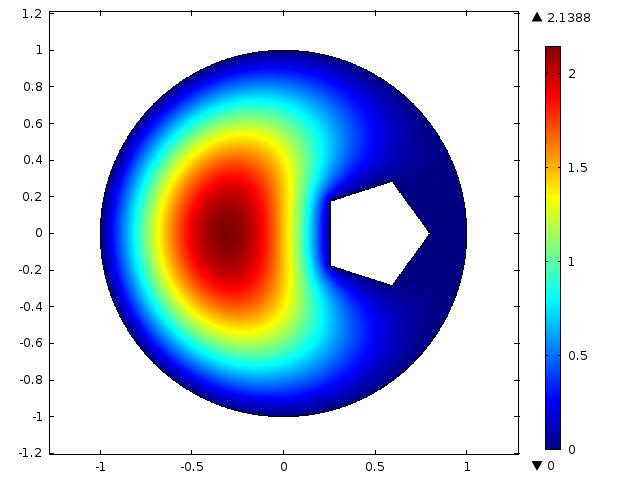}
   }
    \subfloat[Intermediate Position]{%
      \label{fig:pentagon_middle}\includegraphics[scale=0.4,keepaspectratio]{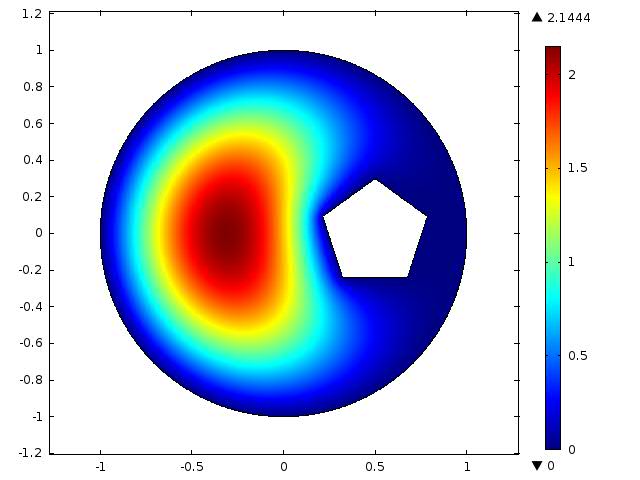}
      }\\
        \subfloat[ON position]{%
      \label{fig:pentagon_max}\includegraphics[scale=0.4,keepaspectratio]{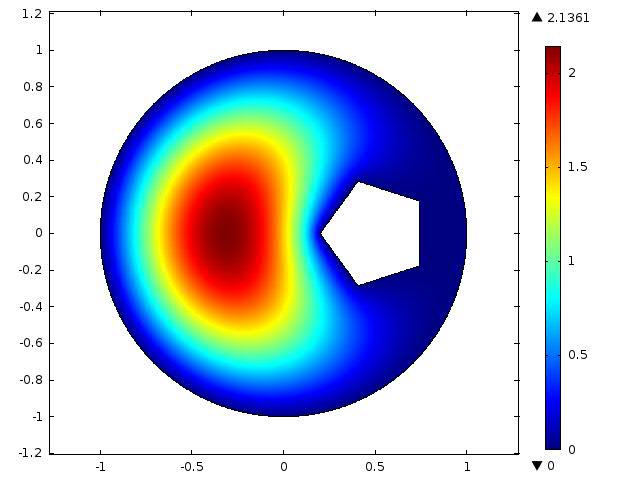}
      }\\
      \subfloat[Another intermediate position]{%
      \label{fig:pentagon_middle_2}\includegraphics[scale=0.4,keepaspectratio]{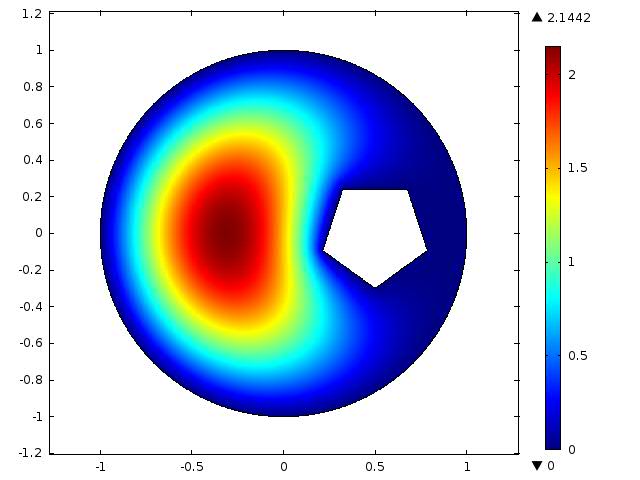}
      }
      \subfloat[OFF position]{%
      \label{fig:pentagon_min_2}\includegraphics[scale=0.4,keepaspectratio]{pentagon_off.jpg}
      }
      \caption{Simulations of ON, OFF and intermediate positions of the pentagon. 
}
      \label{simulation_results_pentagon}
      \end{figure}

\begin{table}[H]
\begin{center}
\begin{tabular}{|c| c| c|}
\hline
$\theta$ & $\lambda_1$ & Configuration  \\ [0.5ex]
\hline
0 &9.089& OFF\\
$\pi/10$ &9.090&--\\
$\pi/5$ &9.092&ON\\
$3\pi/10$ &9.090&--\\
$2\pi/5$ &9.089&OFF\\[1ex]
\hline
\end{tabular}
\end{center}
\caption{Variation of $\lambda_1$ with rotations of the pentagon $P$ about its center by an angle $\theta$ measured with respect to the positive $x_1$-axis.}
\label{table2}
\end{table}
\section{Conclusion}
Let $P$ be a compact simply connected subset of $\mathbb{R}^2$ satisfying assumptions \ref{assumption_A}, \ref{assumption_B} and let $B$ be an open disk in $\mathbb{R}^2$ of radius $r_1$ such that $B \supset \overline{co(C_2(P))}$. For $t\in\mathbb{R}$, let $\rho_t \in SO(2)$ denote the rotation in $\mathbb{R}^2$ about the origin $\underline{o}$ in the anticlockwise direction by an angle $t$
. Now fix $t \in [0, 2\pi)$. 
Let $\Omega_t:= B \setminus \rho_t(P)$ and $\mathcal{F}:= \{\Omega_t \, |\, t \in [0,2 \pi)\}$. Then, using a sector reflection technique, rotating plane method and Hadamard perturbation formula, we proved Theorem \ref{max_min} { for $n$ even, $n \geq 3$,} which describes the extremal configurations for the fundamental Dirichlet eigenvalue $\lambda_1(\Omega_t)$ for $\Omega_t \in \mathcal{F}$. This theorem also characterizes all the maximizing and the minimizing configurations for $\lambda_1$ over $\mathcal{F}$. { Equation (\ref{even_lambda}), Propositions \ref{critical_points} and \ref{complete_critical_points} imply Theorem \ref{max_min} for $n$ even, $n \geq 3$. 

Equation (\ref{even_lambda}) and Proposition \ref{critical_points} hold for any $n \geq 3$, even or odd. That is, we are able to identify some of the critical points of the map $t \longmapsto \lambda_1(t)$ and know that now it is enough to study the sign of $\lambda_1^\prime$ only on $(0, \frac{\pi}{n})$. Our proof of Proposition \ref{complete_critical_points} works only for $n$ { even}, $n \geq 3$. We highlight some of the difficulties faced in proving Proposition \ref{complete_critical_points} for $n$ odd.}

We provide some numerical evidence to validate our main theorem, i.e., Theorem \ref{max_min}, for the case where the obstacle $P$ has { $\mathbb{D}_n$} symmetry { for $n=4$. We also provide some numerical evidence for $n=5$ and conjecture that Theorem \ref{max_min} holds true for $n$ odd too.}

We give many different and interesting generalizations of our result in section \ref{sec:remarks}. Soft obstacles and wells for Schr\"{o}dinger-type operator are addressed in the generalizations. Optimal configurations for the energy functional for the stationary problem (\ref{stationary}) can also be obtained in a similar manner. The generalizations also include results for $P$ having non-smooth boundary and also the case where the ambient space for the family of admissible domains $B \setminus P$ is non-Euclidean. 


\end{document}